\numberwithin{equation}{section}
\theoremstyle{plain}
\newtheorem{theorem}{Theorem}[section]
\newtheorem{proposition}[theorem]{Proposition}
\newtheorem{corollary}[theorem]{Corollary}
\newtheorem{lemma}[theorem]{Lemma}
\theoremstyle{definition}
\newtheorem{example}[theorem]{Example}
\theoremstyle{remark}
\newtheorem{remark}[theorem]{Remark}
\newcommand{\Z}{\ensuremath{\mathbb{Z}}}
\newcommand{\C}{\mathbb{C}}
\newcommand{\Q}{\mathbb{Q}}
\newcommand{\HP}[1]{\mathbb{H}P^{#1}}
\newcommand{\CP}[1]{\mathbb{C}P^{#1}}
\newcommand{\caL}{\mathcal{L}}
\newcommand{\z}[1]{\ensuremath{\Z/2^{#1}}}
\newcommand{\zp}[1]{\ensuremath{\Z/p^{#1}}}
\newcommand{\ZZ}[1]{\ensuremath{\mathbb{Z}_{(#1)}}}
\newcommand{\PP}{\ensuremath{\mathcal{P}^1}}
\newcommand{\xra}{\ensuremath{\xrightarrow}}
\newcommand{\id}{\ensuremath{\mathbbm{1}}}
\newcommand{\ol}{\ensuremath{\overline}}
\newcommand{\wtd}{\ensuremath{\widetilde}}
\newcommand{\lra}[1]{\ensuremath{\langle #1\rangle}}
\DeclareMathOperator{\im}{im}
\DeclareMathOperator{\cok}{coker}
\DeclareMathOperator{\Aut}{Aut}
\newcommand{\mat}[4]{\ensuremath{\footnotesize{\begin{bmatrix}
				#1&#2\\
				#3&#4
\end{bmatrix}}}}
\newcommand{\matwo}[2]{\ensuremath{\footnotesize{\begin{bmatrix}
				#1\\
				#2
\end{bmatrix}}}}
 \newcommand{\smatwo}[2]{\ensuremath{\left[\begin{smallmatrix}
  #1\\
  #2
 \end{smallmatrix}\right]}}
\title[Cohomotopy Sets of $(2n+2)$-manifolds]{Cohomotopy Sets of $(n-1)$-connected $(2n+2)$-manifolds for small $n$}
\author[P. Li]{Pengcheng Li}
\address{Department of Mathematics, School of Sciences, Great Bay University,  {\rm 523000}, Dongguan, Guangdong Province, China}
\email{lipcaty@outlook.com}
\author[J. Pan]{Jianzhong Pan}
\address{
Institute of Mathematics,  Chinese Academy of Mathematics and Systems Science,  University of Chinese Academy of Sciences, {\rm 100190}, Beijing, China}
\email{pjz@amss.ac.cn}
\author[J. Wu]{Jie Wu}
\address{Hebei Normal University, Shijiazhuang, Hebei Province, \rm{050024}, China; Beijing Key Laboratory of Topological Statistics and Applications for Complex Systems, Beijing Institute of Mathematical Sciences and Applications  \rm{101408}, Beijing, China}
\email{wujie@bimsa.cn}
\subjclass[2020]{
	55Q55 
	57N65, 
	55P15, 
	55P40, 
}
\keywords{Cohomotopy Sets,  Highly Connected Manifolds, Suspension Splitting}
\begin{document}

\begin{abstract}
   Let $M$ be a closed orientable $(n-1)$-connected $(2n+2)$-manifold, $n\geq 2$. In this paper we combine the Postnikov tower of spheres and the homotopy decomposition of the reduced suspension space $\Sigma M$ to investigate the (integral) cohomotopy sets $\pi^\ast(M)$ for $n=2,3,4$, under the assumption that $M$ has $2$-torsion-free homology. All cohomotopy sets $\pi^i(M)$ of such manifolds $M$ are well-understood except $\pi^4(M)$ for $n=3,4$.
 \end{abstract}
 
 \maketitle

 \section{Introduction}
Given a based CW complex $X$ and a positive integer $n$, the $n$-th cohomotopy set $\pi^n(X)$ consists of the homotopy classes of based maps from $X$ into the $n$-sphere, which admits a natural abelian group structure in the stable range $\dim(X)\leq 2n-2$. The study of stable cohomotopy groups $\pi^n(X)$ can be traced back to Borsuk \cite{Borsuk36} and Peterson \cite{Peterson56-2}. 
Due to the Pontrjagin-Thom construction (cf.~\citep[Chapter IX, Theorem 5.5]{Kosinski93}), the cohomotopy sets of closed smooth manifolds has been extensively studied, such as \cite{Hansen81,Taylor2012,KMT2012,Konstantis2020,Kons2020}. In particular, Taylor \cite{Taylor2012} solved the Steenrod problem (\cite{Steenrod1947}) by proving that for a based CW-complex $X$ of dimension at most $n+1$, $n\geq 3$, the short exact sequence of abelian groups  
 \[0\to H^{n+1}(X,\z{})/Sq^2_\Z(H^{n-1}(X;\Z))\xra{}\pi^n(X)\xra{h^n}H^n(X)\to 0,\]
 is split if and only if the integral Steenrod square 
 \[Sq^2_\Z\colon H^{n-1}(X;\Z)\xra{\rho_2}H^{n-1}(X;\z{})\xra{Sq^2}H^{n+1}(X;\z{})\]  
has the same image as the classical Steenrod square $Sq^2$.
Here the homomorphism $h^n$ is the $n$-th cohomotopy Hurewicz map sending a class $f\in\pi^n(X)$ to $f^\ast(\sigma_n)$ with $\sigma_n\in H^n(S^n)$ a generator. 
Furthermore, for a (closed oriented) connected $4$-manifold $M$, Taylor enumerated the second cohomotopy set $\pi^2(M)$ by a homotopy theoretic approach, while Kirby, Melvin and Teichner \cite{KMT2012} characterized all the cohomotopy sets of $M$ by geometric arguments. Recently, Li and Zhu \cite{LZ-5mfld}, Amelotte, Culter and So \cite{ACS24} independently characterized the cohomotopy sets of connected $5$-manifolds (with  $H_1(M)$  $2$-torsion-free) in terms of the homotopy decomposition of the reduced suspension space of manifolds. On the other hand, the authors comprehensively studied Peterson's generalized cohomotopy groups $\pi^n(X;G)$ with coefficients in a cyclic group $G$ in terms of unstable homotopy theory. Research has also been done on the motivic stable cohomotopy groups of smooth schemes, which are closely related to the Euler class groups and the orbit sets of unimodular rows of certain group action,   see \cite{AF22, Lerbet24}.

The purpose of this paper is to explore the algebraic topology of $(n-1)$-connected $(2n+2)$-manifolds from the aspect of cohomotopy sets.  Inspired by  C.T.C. Wall's classification of $(n-1)$-connected $2n$- and $(2n+1)$-manifolds \cite{Wall62, Wall67}, the classification of the manifolds in question were extensively and deeply studied by geometric topologists.
For instance, Ishimoto \cite{Ishimoto73} classified the homotopy type of $(n-1)$-connected $(2n+2)$-manifolds with torsion-free homology up to diffeomorphism mod a homotopy $(2n+2)$-sphere, 
Sasao and Takahashi \cite{ST79} investigated the homotopy type of $(2n+2)$-dimensional rational homology spheres, and Fang and Pan \cite{FP04} discovered complete invariants for classifying $(n-1)$-connected $(2n+2)$-dimensional $\pi$-manifolds with general homology.

Let $M$ be a closed orientable $(n-1)$-connected $(2n+2)$-manifold. By Poincar\'e duality, the integral homology groups $H_\ast(M)$ are given by the following table:
\begin{equation}\label{table:HM}
	\begin{tabular}{cccccccc}
		\toprule
		$i$& $n$&$n+1$&$n+2$&$0,2n+2$&$\text{otherwise}$
		\\\midrule
		$H_i(M)$& $\Z^l\oplus T$&  $\Z^k\oplus T$ & $\Z^l$& $\Z$&$0$
		\\ \bottomrule
	\end{tabular}  
\end{equation}
where $l,k\geq 0$ are integers and $T$ is a finite torsion abelian group.
Combining Postnikov stages of spheres and the homotopy type of the reduced suspension $\Sigma M$ described by Theorem \ref{thm:SM} below, we characterize the cohomotopy sets of $(n-1)$-connected $(2n+2)$-manifolds for $n=2,3,4$ by the following three theorems, respectively.

\begin{theorem}\label{thm:chtp-6mfd}
   Let $M$ be a simply connected $6$-manifold with $H_\ast(M)$ given by (\ref{table:HM}), $n=2$. Set $\varepsilon=0$ if $M$ is spin, otherwise $\varepsilon=1$. 
   \begin{enumerate}[(1)]
      \itemsep=1pt
      \item\label{chtp=5} $\pi^5(M)\cong \z{1-\varepsilon}$, $\pi^6(M)\cong \Z$ and $\pi^i(M)=0$ for $i>6$ or $i=1$.
      \item\label{chtp=4} There is a short exact sequence of abelian groups 
	 \[0\to \z{1-\varepsilon}\to \pi^4(M)\xra{\imath}\ker(Sq^2_\Z)\to 0,\] 
	 which splits  if $T$ is $2$-torsion-free.
	 Here $Sq^2_\Z$ is the integral Steenrod square operation,  and $\imath$ is the homomorphism such that the composition 
	 \[h^4\colon \pi^4(M)\xra{\imath}\ker(Sq^2_\Z)\hookrightarrow H^4(M)\]
      is the fourth cohomotopy Hurewicz homomorphism.

    \item\label{chtp=3}  Let $c$ be the integer given by Theorem \ref{thm:SM} with $n=2$.  If $T$ is $2$-torsion-free, then there is a short exact sequence of groups 
	\[0\to G_{12}\oplus T\to \pi^3(M)\to \Z^k\oplus \bigoplus_{i=1}^{l-c-\varepsilon}\z{}\to 0,\]
	where $G_{12}$ is a subgroup of $\Z/12$. Moreover, the homomorphism $\pi^3(M)\to \bigoplus_{i=1}^{l-c-\varepsilon}\z{}$ admits a splitting section, and the action of $ \Z^k\oplus \bigoplus_{i=1}^{l-c-\varepsilon}\z{}$ on $T$ is trivial. 
    
   \item\label{chtp=2} The natural action of $\pi^3(M)$ on $\pi^2(M)$ is transitive and free, hence there is a bijection between $\pi^3(M)$ and $\pi^2(M)$ as sets. Moreover, a class $u\in H^2(M)$ lifts to a class $\tilde{u}\in \pi^2(M)$ by the second cohomotopy Hurewicz map if and only if 
   \[u^2=0,\quad (1-\varepsilon)\Theta_0(u)=0,\]
	where $\Theta_0\colon \{u\in H^2(M;\z{})\mid u^2=0\} \to H^6(M;\z{}) $ is the restriction of some secondary cohomology operation $\Theta$, see Proposition \ref{prop:chtp=2}.
	 
   \end{enumerate}
\end{theorem}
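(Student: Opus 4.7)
The overall plan is to compute each $\pi^i(M) = [M, S^i]$ by obstruction theory along the Postnikov tower of $S^i$, supplemented by the suspension decomposition of $\Sigma M$ from Theorem \ref{thm:SM} when the target is too unstable. Poincar\'e duality on $M$ (both integral and mod $2$), together with the Wu formula identifying $Sq^2$ on top-degree cohomology with cup product by $w_2$, produces the invariant $\varepsilon$ throughout.

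The easy assertions in \ref{chtp=5} are standard: $\pi^6(M) \cong H^6(M) \cong \Z$ by Hopf--Whitney, $\pi^i(M) = 0$ for $i > 6$ by dimension, and $\pi^1(M) = 0$ by simple connectivity. For $\pi^5(M)$, I model $S^5$ up to $6$-equivalence by $P_6 S^5$, a principal $K(\Z/2, 6)$-fibration over $K(\Z, 5)$ with $k$-invariant $Sq^2_\Z$; since $H^5(M) = 0$ and $H^7(M;\Z/2) = 0$, the Puppe sequence reduces to $\pi^5(M) \cong H^6(M;\Z/2)/Sq^2_\Z(H^4(M))$, and the Wu plus $\Z/2$-Poincar\'e duality argument above identifies this quotient with $\z{1-\varepsilon}$. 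For \ref{chtp=4}, one more Postnikov stage of $S^4$ is needed, whose new $k$-invariant is a secondary operation landing in $H^7(-;\Z/2)$; after applying $[M, -]$ and invoking $H^5(M;\Z/2) = H^7(M;\Z/2) = 0$, the resulting Puppe sequences assemble into $0 \to \z{1-\varepsilon} \to \pi^4(M) \to \ker(Sq^2_\Z) \to 0$. A splitting under the $2$-torsion-freeness of $T$ is constructed by lifting free generators of $\ker(Sq^2_\Z)$ through the sphere summands of $\Sigma M$ produced by the decomposition.

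The main obstacle is \ref{chtp=3}. A direct Postnikov climb for $S^3$ would require managing secondary operations for $\pi_4(S^3) = \pi_5(S^3) = \Z/2$ and $\pi_6(S^3) = \Z/12$ simultaneously, which is unwieldy. Instead, I use Theorem \ref{thm:SM} to write $\Sigma M$ as a wedge of spheres, Moore spaces $M(\Z/r, j)$ with $r$ odd (permitted by the $2$-torsion-free hypothesis), and a few $2$-cell subcomplexes of the form $S^j \cup_\eta e^{j+2}$. Computing $[-, S^3]$ summand by summand, and combining with the Puppe extension of the cofiber sequence $S^5 \to M' \to M \to S^6$ where $M'$ is the $5$-skeleton, reassembles into the short exact sequence. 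The subgroup $G_{12} \subseteq \Z/12$ records the image of the top-cell attaching map in $\pi_6(S^3)$; the $l - c - \varepsilon$ copies of $\z{}$ count the surviving $\eta$-attached $2$-cell subcomplexes (with $c$ the correction coming from Theorem \ref{thm:SM}'s spherical split-offs and $\varepsilon$ the further cancellation forced by $w_2$); and the Moore-space summands both contribute the trivial action on $T$ and force the free quotient to split.

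For \ref{chtp=2}, the first $k$-invariant of $S^2$ is the cup square $\cup^2: K(\Z, 2) \to K(\Z, 4)$, giving the obstruction $u^2 = 0$ for $u \in H^2(M)$ to lift. The next nontrivial $k$-invariant is a secondary operation $\Theta$ whose restriction $\Theta_0$ to $\ker(\cup^2)$ is made explicit in Proposition \ref{prop:chtp=2} and lands in $H^6(-;\Z/2)$, accounting for the additional constraint $(1-\varepsilon)\Theta_0(u) = 0$ (vacuous when $M$ is spin). The $\pi^3(M)$-action on $\pi^2(M)$ arises from the Hopf cofibration $S^3 \xrightarrow{\eta} S^2 \to \CP{2}$: transitivity holds because any two lifts of the same $u$ differ by the image of a class from $\pi^3(M)$, and freeness holds because $H^5(M) = 0$ kills the stabilizer obstruction. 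The set-theoretic bijection then follows formally from the orbit--stabilizer description.
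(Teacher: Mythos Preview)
Your overall strategy is close to the paper's, but there is a genuine gap in part (\ref{chtp=3}) and two slips in part (\ref{chtp=2}).

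For (\ref{chtp=3}) you propose to decompose $\Sigma M$ via Theorem \ref{thm:SM} and then ``compute $[-,S^3]$ summand by summand.'' This does not compute $\pi^3(M)=[M,S^3]$: the Freudenthal bound fails ($\dim M=6>2\cdot 3-1$), so $[M,S^3]\not\cong[\Sigma M,S^4]$, and $[\Sigma M,S^3]$ is something else entirely. The paper's key step, which you omit, is the adjunction $\pi^3(M)\cong[\Sigma M,BS^3]$ coming from $S^3=\Omega BS^3$; only then does the wedge decomposition of $\Sigma\ol{M}$ become usable, via the Puppe sequence of $S^6\to\Sigma\ol{M}\to\Sigma M\to S^7$ mapped into $BS^3$. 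Without this your argument does not get off the ground. Even granting it, your justification that ``the Moore-space summands force the free quotient to split'' and give the trivial action on $T$ is too quick: in the paper only the $(\z{})^{l-c-\varepsilon}$ factor is shown to split, via a co-$H$-map argument for the pinch $\Sigma M\to\bigvee S^5$, while the $\Z^k$ factor is \emph{not} claimed to split (cf.\ Remark \ref{rmk:chtp=3}); and the trivial action on $T$ is extracted by comparing with an auxiliary complex $C_{h_2}$ in which the Moore pieces are replaced by spheres.

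For (\ref{chtp=2}) you have two errors. First, the constraint $(1-\varepsilon)\Theta_0(u)=0$ is vacuous when $\varepsilon=1$, i.e.\ when $M$ is \emph{nonspin}; in the spin case $\Theta_0(u)=0$ is a genuine extra obstruction, contrary to what you wrote. Second, freeness of the $\pi^3(M)$-action comes from $\pi^1(M)=H^1(M)=0$ (this is the hypothesis the paper invokes from \cite{KMT2012}): in the principal fibration $S^3\to S^2\to BS^1$ the stabilizer is the image of $[M,\Omega BS^1]=[M,S^1]$, not anything involving $H^5(M)$. Finally, for (\ref{chtp=4}) your outline is correct, but identifying the left-hand term with $\z{1-\varepsilon}$ amounts to the equality $QH^6(M,\Omega\ol{Sq^2})\cong QH^6(M,Sq^2)$, which in the spin case the paper establishes using that the secondary operation detecting $\eta^2$ vanishes on smooth manifolds (see the proof of Corollary \ref{cor:chtp:2n+2}); you should flag that this step is not automatic.
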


\begin{theorem}\label{thm:chtp-8mfd}
	Let $M$ be a closed orientable smooth $2$-connected $8$-manifold with $H_\ast(M)$ given by (\ref{table:HM}), $n=3$. 
	\begin{enumerate}[(1)]
		\itemsep=1pt
		\item\label{cohtp=7,6} $\pi^6(M)\cong\pi^7(M)\cong\z{}$, $\pi^8(M)\cong \Z$ and $\pi^i(M)=0$ for $i>8$ or $i=1$.
		\item\label{cohtp=2} The induced map $\eta_\sharp\colon\pi^3(M)\to \pi^2(M)$ is a bijection.
		\item\label{cohtp=5}  If $T$ is $2$-torsion-free,  then there is a split short exact sequence of abelian groups
		\[0\to G_{24}\to \pi^5(M)\xra{\jmath}\ker((\ol{Sq^2})_\sharp)\to 0, \]
		where $G_{24}$ is a subgroup of $\Z/24$, $\ker((\ol{Sq^2})_\sharp)\cong H^5(M)$ and $\jmath$ is a homomorphism such that the fifth cohomotopy Hurewicz homomorphism $h^5\colon \pi^5(M)\to H^5(M)$ factors through $\jmath$.
		\item\label{cohtp=3} Let $c$ be the integer given by Theorem \ref{thm:SM} with $n=3$ and let $T$ be $2$-torsion-free. Then there is an isomorphism 
		\[\pi^3(M)\cong H^3(M)\oplus (\z{})^{l-c}\oplus G,\]
	where $G$ is an abelian group embedding in the short exact sequence
	\[0\to \z{}\to G\to \bigoplus_{i=2}^k\z{}\oplus \z{\delta}\to 0,\]
	where $\delta\in\{0,1\}$.
	
	\end{enumerate}

\end{theorem}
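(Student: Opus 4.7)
The plan is to analyze each $\pi^i(M)$ by climbing the Postnikov tower of $S^i$, reading off the obstructions as cohomology classes of $M$, and supplementing with the stable splitting of $\Sigma M$ from Theorem~\ref{thm:SM} where needed. As a preliminary, by Poincar\'e duality and UCT one has $H^3(M)\cong\Z^l$, $H^4(M)\cong\Z^k\oplus T$, $H^5(M)\cong\Z^l\oplus T$, $H^6(M)=H^7(M)=0$, $H^8(M)\cong\Z$; moreover, $2$-torsion-freeness of $T$ yields $H^i(M;\z{})=0$ for $i=6,7$, which is precisely what kills the integral Steenrod-type $k$-invariants in the later stages.

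\textbf{Parts (1) and (2).} For $i>8$ and $i=1$ the vanishing follows from dimension and $2$-connectedness, and Hopf's theorem gives $\pi^8(M)\cong H^8(M)\cong\Z$. For $\pi^7(M)$ and $\pi^6(M)$, I would climb the Postnikov tower of $S^n$ through $P_9 S^n$ for $n=6,7$: the $K(\Z,n)$-stage contributes $H^n(M)=0$, and the only nontrivial contribution comes from the $\pi_8(S^n)=\z{}$ term, which appears as a torsor over $H^8(M;\z{})\cong\z{}$; further stages vanish by dimension, giving $\pi^6(M)\cong\pi^7(M)\cong\z{}$. For part~(2), apply $[M,-]$ to the fibre sequence $S^1\to S^3\xrightarrow{\eta}S^2\to K(\Z,2)$; since $H^1(M)=H^2(M)=0$ by $2$-connectedness, both bracketing groups vanish and the sequence collapses to the bijection $\eta_\sharp$.

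\textbf{Part (3).} Climb the Postnikov tower of $S^5$ through $P_9 S^5$. The first $k$-invariant $Sq^2_\Z\colon H^5(M)\to H^7(M;\z{})=0$ vanishes automatically, and the lift indeterminacy $H^6(M;\z{})=0$ forces uniqueness, so $[M,P_6 S^5]\cong H^5(M)$. The second $k$-invariant $k_7\in H^8(P_6 S^5;\z{})$ induces a map $(\overline{Sq^2})_\sharp\colon H^5(M)\to H^8(M;\z{})\cong\z{}$ whose kernel is abstractly isomorphic to $H^5(M)$ (the $2$-primary part of $H^5(M)$ is the free $\Z^l$, and the $T$-part is killed by any homomorphism to $\z{}$, so an index-$\leq 2$ subgroup of $\Z^l\oplus T$ is again $\cong\Z^l\oplus T$). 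The fibre $K(\Z/24,8)$ over $P_7 S^5$ contributes $H^8(M;\Z/24)\cong\Z/24$, quotiented by the image of the action of $[M,\Omega P_7 S^5]$ to give the subgroup $G_{24}\subseteq\Z/24$; the $\pi_9(S^5)=\z{}$ stage contributes nothing since $H^9(M;\z{})=H^{10}(M;\z{})=0$. The short exact sequence splits because each generator of $\ker((\overline{Sq^2})_\sharp)$ is realized by a map $M\to S^5$ coming from the $S^6$-summands of $\Sigma M$ in Theorem~\ref{thm:SM} (desuspended), providing the splitting section; this construction is also why $h^5$ factors through $\jmath$.

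\textbf{Part (4) and the main obstacle.} The Postnikov tower of $S^3$ must be climbed through $\pi_3=\Z$, $\pi_4=\z{}$, $\pi_5=\z{}$, $\pi_6=\Z/12$, $\pi_7=\z{}$, $\pi_8=\z{}$, which is substantially longer, so I would exploit the decomposition of $\Sigma M$ into a wedge of spheres and Moore spaces from Theorem~\ref{thm:SM}, performing the computation summand-by-summand and then reassembling via the Puppe sequences attaching the top $8$-cell of $M$. The $H^3(M)$ factor arises from the base $K(\Z,3)$-stage; the $(\z{})^{l-c}$ factor comes from the $\pi_4(S^3)=\z{}$ contribution across the $l-c$ summands of $\Sigma M$ on which post-composition with the Hopf map $\eta$ is nontrivial, with $c$ counting the summands on which it vanishes; and $G$ packages the intertwined contributions of $\pi_5(S^3)$, $\pi_7(S^3)$, $\pi_8(S^3)$, together with the $2$-primary part $\Z/4$ of $\pi_6(S^3)=\Z/12$ (the odd $\Z/3$ part is absorbed by the $2$-torsion-freeness of $T$). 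The hardest step is identifying the extension class of $G$, in particular the integer $\delta\in\{0,1\}$: this records whether a specific secondary operation (essentially a Toda bracket involving $\eta$, $2$, and a chosen generator of $H^4(M)$) evaluates nontrivially on $M$, and I expect to resolve it by a case analysis on the Moore-space summands appearing in the suspension splitting of $\Sigma M$.
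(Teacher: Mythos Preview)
Your treatment of parts (1)--(3) follows the same Postnikov-tower route as the paper (Corollary~\ref{cor:chtp:2n+2} and Proposition~\ref{prop:cohtp=5}) and is essentially correct, with one gap: to get $\pi^6(M)\cong\z{}$ you must show that the image of $(\Omega\ol{Sq^2})_\sharp$ in $H^8(M;\z{})$ is zero, not merely that $H^8(M;\z{})\cong\z{}$. The paper does this by invoking that the secondary operation detecting $\eta^2$ vanishes on smooth $2$-connected $8$-manifolds (cf.\ \cite[Lemma~4.3]{LZ-5mfld}); this is where smoothness is used. (Incidentally, $H^6(M;\z{})=H^7(M;\z{})=0$ already holds by Poincar\'e duality and UCT, independent of any hypothesis on $T$.)

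For part (4) there is a genuine gap, both in method and in your description of the answer. The paper does \emph{not} climb the Postnikov tower of $S^3$; instead it uses $S^3=\Omega BS^3$ to rewrite $\pi^3(M)\cong[\Sigma M,BS^3]$ and then runs the long exact sequence of the cofibration $S^8\xra{\phi_1}\Sigma\ol{M}\to\Sigma M\to S^9$, with $\Sigma\ol{M}$ given explicitly by Lemma~\ref{lem:olM} and $\phi$ by Proposition~\ref{prop:SM-8mfd-phi}. This reduces the computation to a handful of known groups $\pi_i(BS^3)$ and $[C^6_\eta,BS^3]$. Your six-stage tower climb is in principle possible but you have not identified the $k$-invariants, and your bookkeeping of which homotopy groups contribute where is off: the $(\z{})^{l-c}$ summand arises from the $l-c$ wedge summands $S^6$ of $\Sigma M$ via $\pi_6(BS^3)\cong\pi_5(S^3)\cong\z{}\lra{\eta^2}$, not from $\pi_4(S^3)$; the extension defining $G$ has $\pi_9(BS^3)\cong\pi_8(S^3)\cong\z{}$ on the left and copies of $\pi_5(BS^3)\cong\pi_4(S^3)\cong\z{}\lra{\eta}$ (from the $S^5$-summands) on the right, with $\pi_6(S^3)$ and $\pi_7(S^3)$ playing no role; and $\delta$ is the parity of the coefficient $x$ of $\nu_5$ in the attaching map $\phi$ (a primary invariant), not a Toda bracket or secondary operation.
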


\begin{theorem}\label{thm:chtp-10mfd}
	Let $M$ be a $3$-connected 10-manifold with $H_\ast(M)$ given by (\ref{table:HM}), $n=4$.
	\begin{enumerate}[(1)]
		\itemsep=1pt
		\item $\pi^8(M)\cong\pi^9(M)\cong\z{}$, $\pi_{10}(M)\cong\Z$ and $\pi_i(M)=0$ for $i>10$ or $i=1$.
		\item The induced map $\eta_\sharp\colon \pi^3(M)\to \pi^2(M)$ is a bijection.
		\item If $T$ is $2$-torsion-free, then $\pi^7(M)\cong G_{24}$ is a subgroup of $\Z/24$, and $\pi^6(M)$ is isomorphic to the kernel of the integral reduced Steenrod  $3$-th power $\PP_\Z\colon H^6(M;\Z)\to H^{10}(M;\Z/3)$. 
		\item\label{MS5} Let $u\in\pi^5(M)$ be a preimage of an element $\bar{u}\in \pi^6(\Sigma M)$ under the suspension map $E_\sharp\colon \pi^5(M)\to \pi^6(\Sigma M)$. If $T$ is $2$-torsion-free, then there is a bijection between $E_\sharp^{-1}(u)$ and $\pi^9(M)\cong\z{}$. 
	
		\item Let $c$ be given by Theorem \ref{thm:SM} with $n=4$ and let $T$ be $2$-torsion-free. Then there is an isomorphism  
		\[\pi^3(M)\cong \big(\bigoplus_{i=1}^{k+l-c}\z{}\big)\oplus\big(\bigoplus_{i=1}^{l-c}\Z/12\big)\oplus \big(\bigoplus_{i=1}^c\Z/6\big)\oplus (T\otimes \Z/3).\]   
	\end{enumerate}
\end{theorem}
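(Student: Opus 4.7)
The plan is to combine a Postnikov-tower analysis of each target sphere with the suspension splitting of $\Sigma M$ from Theorem~\ref{thm:SM}. The starting data is the cohomology of $M$: Poincar\'e duality and the universal coefficient theorem give $H^i(M)=0$ for $0<i<4$ and for $7\le i\le 9$, $H^4(M)\cong\Z^l$, $H^5(M)\cong\Z^k\oplus T$, $H^6(M)\cong\Z^l\oplus T$, and $H^{10}(M)\cong\Z$; the $2$-torsion-free hypothesis on $T$ further forces $H^i(M;\Z/2)=0$ for $i=7,8,9$.

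Parts~(1)--(3) fall to direct Postnikov analysis. For~(1), only the top stage $K(\Z/2,10)$ of the Postnikov tower of $S^i$ contributes to $[M,S^i]$ for $i=8,9$, pairing with $H^{10}(M;\Z/2)\cong\Z/2$; the case $\pi^{10}(M)\cong\Z$ is just the fundamental-class pairing. For~(2), applying $[M,-]$ to the Hopf fibration $S^1\to S^3\xra{\eta}S^2$ and to $S^3\to S^2\to K(\Z,2)$ makes $\eta_\sharp$ injective via $H^1(M)=0$ and surjective via $H^2(M)=0$. For~(3), the mod~$2$ Postnikov stages of $S^7$ contribute nothing, so $[M,S^7]$ reduces to a quotient of $H^{10}(M;\Z/24)\cong\Z/24$ by the connecting-map image of $[M,\Omega P_9(S^7)]\cong H^6(M)$, giving a subgroup $G_{24}\le\Z/24$; analogously for $\pi^6(M)$, all $2$-primary obstructions on $[M,K(\Z,6)]=H^6(M)$ are killed by our cohomological vanishing, leaving the $3$-primary $k$-invariant from $\Z/3\subset\pi_9(S^6)$, which is precisely the integral Steenrod power $\PP_\Z$.

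For~(4) I invoke the James fibration $S^5\to\Omega S^6\xra{H_2}\Omega S^{11}$ (an integral fibration since $n=5$ is odd), whose looping yields the fiber sequence $\Omega^2 S^{11}\to S^5\to\Omega S^6$. Applying $[M,-]$ produces the exact sequence
\[\pi^6(\Sigma^2 M)\xra{\partial}\pi^{11}(\Sigma^2 M)\to\pi^5(M)\xra{E_\sharp}\pi^6(\Sigma M),\]
and a Postnikov calculation identifies $\pi^{11}(\Sigma^2 M)\cong\Z/2$ from the stage $K(\Z/2,12)$ against $H^{10}(M;\Z/2)$. Since $H_2$ is detected by the cup square and $\Sigma^2 M$ is a double suspension (so all cup products vanish), $\partial=0$; the remaining $\Z/2$ then acts freely and transitively on $E_\sharp^{-1}(\bar u)$, delivering the asserted bijection with $\pi^9(M)\cong\Z/2$.

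Part~(5) is where the suspension splitting does the heavy lifting. I would apply $[-,S^3]$ to the summand decomposition of $\Sigma M$ supplied by Theorem~\ref{thm:SM} and match each summand against the Postnikov tower of $S^3$, whose contributions to $\pi^3(M)$ through dimension~$10$ come from $\pi_4=\pi_5=\Z/2$ and $\pi_6=\Z/12$ (higher stages either pair with vanishing cohomology or are killed by $k$-invariant actions). The $k+l-c$ copies of $\Z/2$ come from pairing $\pi_4$ and $\pi_5$ with the free parts of $H^{4,5}(M;\Z/2)$; pairing $\pi_6=\Z/12$ with $H^6(M;\Z/12)\cong(\Z/12)^l\oplus(T\otimes\Z/3)$ yields, after cutting by a $2$-primary secondary obstruction that is non-zero on precisely $c$ of the $l$ free summands (this being the content of the integer $c$ of Theorem~\ref{thm:SM}), the contributions $l-c$ copies of $\Z/12$, $c$ copies of $\Z/6$, and $T\otimes\Z/3$. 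The main obstacle is here: one must verify that each contribution splits off as a direct summand of $\pi^3(M)$, which requires tracking the higher $k$-invariants of $S^3$ along each piece of the splitting of $\Sigma M$ and a cell-by-cell identification of the $\Z/12$-versus-$\Z/6$ dichotomy with the integer~$c$.
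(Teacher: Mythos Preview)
Your treatment of (1)--(3) via the Postnikov tower is essentially the paper's argument (Corollary~\ref{cor:chtp:2n+2} with $n=4$).

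For (4) there is a genuine gap. The exact sequence of pointed sets you write down only controls the fiber $E_\sharp^{-1}(0)$; for a general $\bar u$ one needs the action of $[M,\Omega^2S^{11}]$ on $\pi^5(M)$, and by Taylor \cite[Theorem~5.2]{Taylor2012} the stabilizer of a lift $u$ is the image of $\Phi=\varphi_u+(\Omega H)_\sharp$, where $\varphi_u(v)$ is built from the reduced diagonal $M\to M\wedge M$ followed by $v\wedge u$ and a fixed map $\mathfrak D'\colon \Omega^2S^6\wedge S^5\to\Omega^2S^{11}$. You never mention $\varphi_u$, so your argument at best covers $\bar u=0$. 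Moreover, the claim ``$H_2$ is detected by the cup square and cup products vanish in $\Sigma^2M$, hence $\partial=0$'' does not prove $(\Omega H)_\sharp=0$: under adjunction $(\Omega H)_\sharp$ is not induced by a map of spheres, so vanishing of cup products in $\Sigma^2M$ is not the relevant obstruction. The paper handles both pieces separately: $\varphi_u=0$ is shown by a homology computation in free loop spaces (Lemma~\ref{lem:cv=0}), and $(\Omega H)_\sharp=0$ is shown by checking, via the decomposition of Proposition~\ref{prop:SM-10mfd}, that every class in $[C_\varphi,\Omega S^6]$ is a suspension, on which the James--Hopf map vanishes (Lemma~\ref{lem:MS5:JH}).

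For (5) your Postnikov-tower approach is workable but needlessly hard, and you correctly flag the splitting as the obstacle. The paper avoids this entirely: one passes to $\pi^3(M)\cong[\Sigma M,BS^3]$, uses the cofibration $S^{10}\to\Sigma\ol M\to\Sigma M\to S^{11}$ to reduce to $[\Sigma\ol M,BS^3]$, and then reads off the answer from the wedge decomposition of $\Sigma\ol M$ in Lemma~\ref{lem:olM}. Each wedge summand contributes a direct summand to $[\Sigma\ol M,BS^3]$ automatically (Lemma~\ref{lem:10-mfd:MS3}): the $k$ copies of $S^6$ give $\pi_5(S^3)\cong\Z/2$, the $l-c$ copies of $S^5\vee S^7$ give $\pi_4(S^3)\oplus\pi_6(S^3)\cong\Z/2\oplus\Z/12$, the $c$ copies of $C^7_\eta$ give $[C^6_\eta,S^3]\cong\Z/6$, and the Moore pieces give $T\otimes\Z/3$. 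No extension problems arise, and the integer $c$ enters through the splitting of $\Sigma\ol M$ rather than through a secondary operation on $H^6$.
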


The canonical $H$-multiplication of $S^3$ induces a group structure on $\pi^3(M)$, and it is surprising that the group structure of $\pi^3(M)$ of the $(2n+2)$-manifolds in question are abelian for $n=3,4$, under the assumption that $H_\ast(M)$ is $2$-torsion-free.  In Theorem \ref{thm:chtp-10mfd} (\ref{MS5}), we enumerate the fifth cohomotopy set $\pi^5(M)$ of a $3$-connected $10$-manifold $M$ by applying Taylor's analysis method \cite{Taylor2012} to the classical EHP fibre sequence on $S^5$. More generally, the EHP fibre sequence on odd-dimensional $i$-spheres is useful to enumerate the unstable cohomotopy sets $\pi^i(M)$ for odd $i$, say $\pi^5(M)$ of a $9$-manifold $M$; the details are omitted in this paper. On the other hand, we haven't found an effective way to study the fourth cohomotopy set $\pi^4(M)$ of an $8$- or $10$-manifold $M$.

\begin{remark}
Let $M$ be a closed orientable $2$-connected $8$-manifold. It is difficult to apply the quaternionic Hopf fibration sequence 
\[S^3\to S^7\xra{\nu_4}S^4\xra{\imath}B S^3\]
to enumerate the integral fourth cohomotopy set $\pi^4(M)$. However, when localized at an odd prime $p\geq 5$, we have the following characterization of the $p$-local cohomotopy set $\pi^4(M;\ZZ{p})=[M,S^4_{(p)}]$, see \cite[Theorem 5.12]{HL-7mfd}:
\begin{enumerate}
\item The Hurewicz map $h_{(p)}=\imath_\sharp\colon \pi^4(M;\ZZ{p})\to H^4(M;\ZZ{p})$ is onto the subset 
\[\mathcal{S}=\{\beta\in H^4(M;\ZZ{p})\mid \beta^2=\beta\smallsmile \beta=0\}.\]
\item For $\beta\in\mathcal{S}$, there is a bijection between $h_{(p)}^{-1}(\beta)$ and the cokernel of the homomorphism 
\[\psi_e\colon H^3(M;\ZZ{p})\to H^7(M;\ZZ{p}),\quad \psi_e(\gamma)=\gamma\smallsmile \delta_e,\]
where  $e\in \pi^4(M;\ZZ{p})$ satisfies $h_{(p)}(e)=\beta$, and $\delta_e=e^\ast(\iota)\in H^4(M;\ZZ{p})$ with $\iota\in H^4(S^4_{(p)};\ZZ{p})$. 
\end{enumerate}

\end{remark}

The computation of the cohomotopy groups $\pi^i(M)$ depends on the homotopy decomposition of the reduced suspension space $\Sigma M$, see Theorem \ref{thm:SM} below. 
We need the following notation. Denote by $C_{f}$ the homotopy cofibre of a map $f\colon X\to Y$. For integers $m,n\geq 1$,  let $P^n(G)$ be the Peterson space (or Moore space) which has a unique reduced integral cohomology group $G$ in dimension $n$; in particular, we write $P^n(m)=P^n(\Z/m)=S^{n-1}\cup_{m}e^n$ and set $P^n(1)=\ast$. Denote by $C^{n+2}_\eta=\Sigma^{n-2}\CP{2}$ the iterated suspension space of the complex projective plane.

\begin{theorem}\label{thm:SM}
	Let $M$ be a closed orientable smooth $(n-1)$-connected $(2n+2)$-manifold with $H_\ast(M)$ given by (\ref{table:HM}), where $T$ is a finite $2$-torsion-free group.   There exist non-negative integers $c\leq l$, $r_{j_0}$ and $r_{j_1}$ that depend on $M$; $c=0$ if and only if $Sq^2$ acts trivially on $H^n(M;\z{})$, and  $r_{j_0}=r_{j_1}=0$ if $T$ contains no $3$-torsion. 
\begin{enumerate}[(1)]
	\item\label{SM:6mfd}[Proposition \ref{prop:SM-hbar}] For $n=2$, there is a homotopy equivalence 
	\begin{equation*}
	   \Sigma M\simeq \bigvee_{i=2}^{l-c}(S^3\vee S^5)\vee \bigvee_{i=1}^kS^4\vee P^4\big(\frac{T}{\Z/3^{r_{j_0}}}\big)\vee P^5(T)\vee \bigvee_{i=2}^{c}C^5_\eta\vee C_\hbar, 
	\end{equation*}
	where $\hbar\colon S^6\to S^3\vee S^5\vee P^4(3^{r_{j_0}})\vee C^5_\eta$ refers to (\ref{eq:hbar}).
	\item\label{SM:8mfd}[Proposition \ref{prop:SM-8mfd-phi}] For $n=3$,  there is a homotopy equivalence 
	\begin{multline*}
		\Sigma M\simeq \bigvee_{i=2}^k S^5\vee \bigvee_{i=2}^{l-c}S^{4} \vee \bigvee_{i=1}^{l-c} S^6\vee \bigvee_{i=2}^c C^6_\eta \vee P^{5}(\frac{T}{\Z/3^{r_{j_0}}}) \vee P^{6}(\frac{T}{\Z/3^{r_{j_1}}})\vee  C_{\phi},
	\end{multline*}
	where $\phi\colon S^8\to S^5\vee S^4\vee C^6_\eta\vee P^5(3^{r_{j_0}})\vee P^6(3^{r_{j_1}})$ refers to (\ref{eq:phi}).
	\item\label{SM:10mfd}[Proposition \ref{prop:SM-10mfd}] For $n=4$, there is a homotopy equivalence 
	\begin{multline*}
		\Sigma M\simeq \bigvee_{i=1}^k S^6\vee \bigvee_{i=2}^{l-c}(S^{5}\vee S^7) \vee \bigvee_{i=3}^c C^7_{\eta} \vee P^{6}(T)\vee P^{7}(\frac{T}{\Z/3^{r_{j_0}}}) \vee C_{\varphi},
	\end{multline*}
	where $\varphi\colon S^{10}\to S^7\vee S^5\vee C^7_\eta\vee C^7_\eta\vee P^7(3^{r_{j_0}})$ refers to (\ref{eq:varphi}).
\end{enumerate}	

\end{theorem}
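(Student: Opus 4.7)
The plan is to handle each of the three cases $n\in\{2,3,4\}$ via its own proposition, following a common three-step scheme. Since $M$ is $(n-1)$-connected with homology given by (\ref{table:HM}), $\Sigma M$ admits a minimal CW structure with cells only in dimensions $n+1,n+2,n+3$ together with a single top cell in dimension $2n+3$. Using that $T$ is $2$-torsion-free, together with the universal coefficient theorem, I would first express the $(n+3)$-skeleton $Y\subset\Sigma M$ as a wedge of sphere cells, Moore-space summands $P^{n+1}(T)$ and $P^{n+2}(T)$, and $C^{n+2}_\eta$-summands. The attaching maps of the higher-dimensional cells to the lower cells lie in the unstable homotopy groups of this wedge, so the problem reduces to identifying these attaching maps explicitly.

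The integer $c$ counts the $C^{n+2}_\eta$-summands through the classical identification of $\eta$-attachments with the Steenrod square: the rank of $Sq^2\colon H^n(M;\z{})\to H^{n+2}(M;\z{})$ equals the number of $(n+2)$-cells of $M$ attached via $\eta$ to $S^n$-cells, so diagonalizing $Sq^2$ in a suitable basis yields exactly $c$ copies of $C^{n+2}_\eta$ and leaves the remaining sphere cells wedge-split. The integers $r_{j_0}, r_{j_1}$ arise analogously from $3$-primary attaching data: the generator $\alpha_1$ of order $3$ in the relevant $3$-stem can couple sphere cells with $3$-primary summands of the Moore-space part of $Y$, producing the reductions $P^{n+1}(T/\Z/3^{r_{j_0}})$ and $P^{n+2}(T/\Z/3^{r_{j_1}})$ recorded in the statement. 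If $T$ contains no $3$-torsion, the $\alpha_1$-linkages automatically vanish. Finally, the top-cell attaching map $S^{2n+2}\to Y$ is constrained by the cup product pairing $H^n(M)\otimes H^{n+2}(M)\to\Z$, which is perfect modulo torsion by Poincar\'e duality. After passing to dual bases on the free parts, pairs $S^{n+1}\vee S^{n+3}$ whose duals pair trivially with the remaining generators can be split off as wedge summands, and what remains is the residual cofiber $C_\hbar$ (respectively $C_\phi$, $C_\varphi$) given in the corresponding proposition.

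The main obstacle will be the final step: verifying that the listed wedge summands genuinely split off and that no indeterminacy in the attaching map has been overlooked. The $2$-torsion-free hypothesis is essential here, because it eliminates a range of potential obstructions from $2$-primary $\pi_*(S^*)$ — notably $\eta^2$, $\nu$ and their secondary composites — which would otherwise force additional indecomposable summands into the cofiber. For $n=3,4$ one must in addition use unstable Hilton--James or EHP-sequence information to bound the homotopy indeterminacy of the attaching map, and to confirm that no further $3$-primary (or higher-torsion) factor remains hidden inside $C_\phi$ or $C_\varphi$.
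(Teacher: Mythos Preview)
Your overall architecture---decompose the skeleton as a wedge, then analyze the top attaching map---matches the paper, but two of your key mechanisms are either wrong or missing.

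First, the splitting step. You propose to split off summands by invoking the cup-product pairing and Poincar\'e duality, choosing ``dual bases'' so that unpaired $S^{n+1}\vee S^{n+3}$ pieces fall away. This is not how the splitting is obtained, and in fact cup-product data alone cannot see the components of the attaching map living in the torsion groups $\pi_{2n+2}$ of the wedge summands (e.g.\ the $\nu'$, $\eta\nu$, $\alpha$, $\tilde\nu$ components). The paper instead writes the suspended attaching map explicitly in terms of the generators computed in Lemmas~\ref{lem:Toda}--\ref{lem:htpgrps}, and then applies \emph{elementary matrix row operations} on the target wedge (self-equivalences of $\bigvee A_i$) to concentrate each type of component in a single summand. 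Once a projection $q_j\circ f$ is arranged to be null, Lemma~\ref{lem:HL} splits $A_j$ off the cofibre. Your proposal never isolates this change-of-basis/row-operation step, and without it you cannot get from the a priori attaching map to the short expressions (\ref{eq:hbar}), (\ref{eq:phi}), (\ref{eq:varphi}).

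Second, two subsidiary points are off. The $2$-torsion-free hypothesis does \emph{not} ``eliminate $\eta^2$, $\nu$''---these still appear in the attaching maps. What it does is force the Moore pieces $P^{n+j}(T)$ to be odd-primary, so that $\pi_{2n+2}(P^{n+j}(p^r))$ is either zero or $3$-primary; this is what makes Chang's classification (Lemma~\ref{lem:Chang}) and the vanishing statements in Lemma~\ref{lem:htps:Moore} available. And for $n=2$ there is an extra verification you do not mention: since $\ol{M'}$ is not a suspension, one must check separately that the suspended attaching map $h'$ contains no Whitehead-product components before the matrix method is legitimate (the paper does this via the vanishing of $\pi_6$ of the relevant smash products). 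For $n=3$ there is also the nontrivial input that the secondary operation detecting $\eta^2$ acts trivially on smooth spin $8$-manifolds, which is what lets the $S^6$ summands split off; your outline does not account for this.
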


The integer $c$ in Theorem \ref{thm:SM} is dependent on the manifold $M$, and adjustments to the codomains of the attaching maps $\hbar$, $\phi$ and $\varphi$ are necessary when $c = l$ or $c = 0$. In particular, we adopt the standard convention that $\bigvee_{i=a}^b X_i=\ast$ if $b < a$. When the group $T$ contains 3-torsion, the integers $r_{j_0}$ and $r_{j_1}$ in Theorem \ref{thm:SM} can be determined through certain higher order cohomology operations, the precise description of which is similar that in theorems of \cite{LZ-5mfld, lipc-4mfld, HL-7mfd} and we omit it here.

The double suspension $\Sigma^2M$ of a simply connected 6-manifold $M$ \emph{without 2- or 3-torsion in homology} was firstly given by \cite[Theorem 1.1]{Huang-6mflds}, so Theorem \ref{thm:SM} (\ref{SM:6mfd}) is an improvement of Huang's theorem. 
Huang also determined the suspension splitting of $(n-1)$-connected $(2n+2)$-manifolds \emph{with torsion-free homology} in the cases $n=5$ and $n=7$, see \cite[Lemmas 6.4 and 6.6]{Huang22}, respectively. The suspension splitting of manifolds can be employed to compute important invariants in geometry and mathematical physics, such as the reduced complex or real $K$-groups and the gauge groups, see \cite{Huang21,Huang22, Huang-6mflds,LZ-5mfld, ST19, ACS24}.


The paper is arranged as follows. Section \ref{sec:prelim} contains globally used notation and some lemmas that describe some homotopy groups and their generators of elementary $(n-1)$-connected $(n+2)$-dimensional complexes with $2$-torsion-free homology; most material is known. In Section \ref{sec:chtp-stable} we study the stable cohomotopy groups in terms of the Postnikov tower of $S^n$ in the stable range; in particular, we give the characterization of stable cohomotopy groups of $(2n+2)$-manifolds, see Corollary \ref{cor:chtp:2n+2}.  Section \ref{sec:chtp=2n+2} is divided into three subsections to investigate the unstable cohomotopy sets of $(n-1)$-connected $(2n+2)$-manifolds for $n=2,3,4$, respectively. The proofs of Theorem \ref{thm:chtp-6mfd}, \ref{thm:chtp-8mfd} and \ref{thm:chtp-10mfd} are arranged at the end of Subsection \ref{sec:chtp:n=2}, \ref{sec:chtp:n=3} and \ref{sec:chtp:n=4}, respectively. The proofs utilize the homotopy decomposition of $\Sigma M$ given by Theorem \ref{thm:SM}, whose proof is arranged in Section \ref{sec:SM-proofs}.

\medskip
\noindent\textbf{Acknowledgements.}
The authors would like to thank the anonymous referee for their careful reading and valuable comments that improved the quality of this paper.
Pengcheng Li and Jianzhong Pan were partially supported by the National Natural Science Foundation of China (Grant No. 12101290 and 11971461), Jie Wu was partially supported by the High-level Scientific Research Foundation of Hebei Province.

\section{Global notation and lemmas}\label{sec:prelim}
Throughout this paper, all spaces are based CW-complexes and all maps are base-point-preserving. For spaces $X,Y$, let $[X,Y]$ be the set of homotopy classes of based maps from $X$ to $Y$; we don't distinguish a map with its homotopy class in notation. 
If there is no confusion, we write $\eta=\eta_n=\Sigma^{n-2}\eta$ as the iterated suspension of the Hopf map $\eta\in \pi_3(S^2)$. 
Denote by $\id_X$ the identity map of $X$; in particular, write $\id_n=\id_{S^n}$ and $\id_{\eta}=\id_{C^{n+2}_\eta}$ for simplicity. 
We denote by $i_{n+l}\colon S^{n+l}\to X$ and $q_{n+k}\colon Y\to S^{n+k}$ the possible canonical inclusion and pinch maps, respectively; we also add some superscripts to distinguish these maps. For example, we have the homotopy cofibre sequences for $P^{n}(p^r)$ and $C^{n+2}_\eta=\Sigma^{n-2}\C P^2$, which define the canonical inclusion and pinch maps: 
\begin{align*}
	S^{n-1}\xra{p^r}S^{n-1}\xra{i_{n-1}}P^{n}(p^r)\xra{q_n}S^n,\\
	S^{n+1}\xra{\eta_n}S^n\xra{i_n^\eta}C^{n+2}_\eta\xra{q_{n+2}^\eta}S^{n+2}.
\end{align*} 
For  a finitely generated abelian group $G$, we denote by $K_n(G)=K(G,n)$ the Eilenberg-MacLane space of homotopy type $(G,n)$. If $G$ contains $p$-torsion, $p$ a prime, we denote by $\rho_n\colon K_n(G)\to K_n(\Z/p)$ the homomorphism induced by the mod $p$ reduction $G\to \Z/p$. 
We write 
\[ G\cong C_1\langle x_1\rangle\oplus \cdots \oplus C_m\langle x_m\rangle\] 
if $G$ is the direct sum of the cyclic groups $C_i$ with generators $x_i$, $i=1,2,\cdots,m$. Denote by $(a,b)=\gcd\{a,b\}$ the greatest common divisor between two integers $a,b$.

We also need some technical lemmas. 
\begin{lemma}\label{lem:Chang}
	For $n\geq 3$, every $(n-1)$-connected complex of dimension at most $n+2$ with $2$-torsion-free homology is homotopy equivalent to a finite wedge sum of spheres $S^{n+i}$ for $i=0,1,2$, the mod $p^r$ Moore spaces $P^{n+j}(p^r)$ with $p$ odd numbers for $j=1,2$, and the $2$-cell Chang complex $C^{n+2}_\eta=\Sigma^{n-2}\C P^{2}$, where $\eta=\eta_n\colon S^{n+1}\to S^n$ is the iterated Hopf map. 
	\begin{proof}
	A special case of Chang's classification theorem \cite{Chang50}. 
	\end{proof}
\end{lemma}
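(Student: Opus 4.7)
The plan is to build $X$ skeleton by skeleton and to use self-equivalences of the intermediate skeleta to put the attaching maps into a normal form; the 2-torsion-free hypothesis then restricts the possible summands to exactly the list stated. This is a streamlined instance of Chang's original argument, made appreciably easier by the fact that, since $n\geq 3$, we are in the stable range throughout.

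First, cellular approximation and $(n-1)$-connectivity allow us to assume $X$ has cells only in dimensions $n$, $n+1$, $n+2$. By Hurewicz, $X^{(n)}\simeq \bigvee S^n$. The $(n+1)$-cells are attached via a map whose class in $\pi_n(\bigvee S^n)=\bigoplus\Z$ is encoded by an integer matrix $A$. Row and column operations on $A$ are realized by self-equivalences of the two $n$-sphere wedges (for $n\geq 2$), so Smith normal form together with the prime decomposition of each cyclic factor yields
\[X^{(n+1)}\simeq \bigvee S^n\vee \bigvee S^{n+1}\vee \bigvee_i P^{n+1}(p_i^{r_i}),\]
and the hypothesis that $H_n(X)$ is 2-torsion-free forces each $p_i$ to be odd.

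Next I would attach the $(n+2)$-cells. Since $n\geq 3$, the group $\pi_{n+1}(X^{(n+1)})$ sits in the stable range (Freudenthal gives stability of $\pi_{n+1}$ on $(n-1)$-connected spaces whenever $n\geq 3$), so it splits as a direct sum over the wedge summands, with $\pi_{n+1}(S^n)=\Z/2\cdot\eta$, $\pi_{n+1}(S^{n+1})=\Z$, and $\pi_{n+1}(P^{n+1}(p^r))=0$ for $p$ odd (this last follows from the cofibre sequence $S^n\xra{p^r}S^n\to P^{n+1}(p^r)\to S^{n+1}\xra{p^r}S^{n+1}$ since $p^r\colon \Z/2\to\Z/2$ is an isomorphism and $p^r\colon\Z\to\Z$ is injective). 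Thus each $(n+2)$-cell is attached by a tuple of $\eta$-components to some $S^n$ summands together with integer coefficients to some $S^{n+1}$ summands; the Moore space summands $P^{n+1}(p_i^{r_i})$ are untouched and split off. A second Smith-normal-form step on the integer part produces Moore spaces $P^{n+2}(m)$ (with $m$ odd by 2-torsion-freeness of $H_{n+1}(X)$), surviving $S^{n+1}$ summands, and pure $\eta$-attachments producing $C^{n+2}_\eta$ paired with $S^n$ summands.

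The one subtlety, which I expect to be the main obstacle, is a cell whose attaching map mixes an $\eta$-component to some $S^n$ with an odd multiple $m\cdot i_{n+1}$ of an $S^{n+1}$. Here one applies the self-equivalence of $S^n\vee S^{n+1}$ sending $i_{n+1}$ to $i_{n+1}+\eta$: this replaces the $\eta$-coefficient of the attaching map by $1+m$, which is even and hence zero in $\Z/2$, absorbing the $\eta$-term so that the cell becomes a pure $P^{n+2}(m)$. The 2-torsion-free hypothesis is exactly what excludes even $m$, where the $\eta$-component could not be absorbed and would instead produce Chang complexes outside our list. Bookkeeping the simultaneous normalization of $\eta$- and integer-data across all the attaching maps is the delicate part, but the stable direct-sum decomposition keeps the two kinds of data independent and makes the argument go through.
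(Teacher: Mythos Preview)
Your argument is correct. The paper's own proof is a one-line citation of Chang's 1950 classification theorem, with no details given; you have instead supplied a direct, self-contained proof in the spirit of Chang's original method. The skeleton-by-skeleton construction, the Smith normal form reductions, the vanishing $\pi_{n+1}(P^{n+1}(p^r))=0$ for odd $p$, and the shear self-equivalence $i_{n+1}\mapsto i_{n+1}+i_n\eta$ to absorb mixed $\eta$/integer attaching data when the integer part is odd are all exactly right, and the $2$-torsion-free hypothesis enters precisely where you say it does. The ``bookkeeping'' you flag as delicate is routine once one observes that the self-equivalences of $\bigvee S^n\vee\bigvee S^{n+1}$ act on $\pi_{n+1}\cong(\Z/2)^a\oplus\Z^b$ by block upper-triangular matrices $\left(\begin{smallmatrix}A&B\\0&C\end{smallmatrix}\right)$: first put the $\Z$-block in Smith normal form (all diagonal entries odd by hypothesis), then use the $B$-block to clear the $\Z/2$-entries above the nonzero diagonal, then row/column reduce the remaining $\Z/2$-block. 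Your approach trades a black-box citation for an explicit and instructive argument; the paper's citation is shorter but opaque.
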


\begin{lemma}[cf.  \cite{TodaBook}]\label{lem:Toda}
	The following hold:
	\begin{enumerate}
		\itemsep=5pt
		\item $\pi_3(S^2)\cong\Z\langle \eta\rangle$;  for $n\geq 3$, $\pi_{n+1}(S^n)\cong \z{}\lra{\eta_n}$, $\pi_{n+2}(S^n)\cong \z{}\lra{\eta^2=\eta_n\eta_{n+1}}$.
		\item $\pi_6(S^3)\cong\Z/12\langle \nu' \rangle$, the $3$-primary component of $\pi_6(S^3)$ is generated by $\alpha=4\nu'$; $\pi_7(S^4)\cong \Z\langle \nu_4\rangle\oplus\Z/12\langle \Sigma \nu'\rangle$,  $\pi_{n+3}(S^n)\cong\Z/{24}\langle \nu_n\rangle$ for $n\geq 5$. Note $\eta^3=6\nu'$, $2\nu_n=\Sigma^{n-3}\nu'$ for $n\geq 5$.
		\item $\pi_7(S^3)\cong \z{}\langle \nu'\eta_6=\eta\nu_4\rangle$, $\pi_8(S^4)\cong \z{}\langle \nu_4\eta_7\rangle\oplus\z{}\langle \eta_4\nu_5\rangle$, $\pi_9(S^5)\cong \z{}\lra{\nu_5\eta_8}$ and $\pi_{n+4}(S^n)=0$ for $n\geq 6$. Note $\eta\nu_n=0$ for $n\geq 6$.
	\end{enumerate}
\end{lemma}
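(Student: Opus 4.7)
The plan is to derive these groups systematically from the long exact sequence of the Hopf fibration, Freudenthal's suspension theorem, and the EHP fibre sequence on $S^n$. First, I would get $\pi_3(S^2)\cong\Z\langle\eta\rangle$ from the Hopf fibration $S^1\to S^3\to S^2$. Freudenthal then gives that $\pi_3(S^2)\xra{E}\pi_4(S^3)$ is surjective with kernel generated by the Whitehead product $[\iota_2,\iota_2]=2\eta$, so $\pi_4(S^3)\cong\z{}$; all higher suspensions in this stem are isomorphisms, so $\pi_{n+1}(S^n)\cong\z{}\langle\eta_n\rangle$ for $n\geq 3$. A parallel argument using the next Freudenthal range and the EHP boundary $P(\iota_5)=[\iota_2,\iota_2]$ then shows $\pi_{n+2}(S^n)\cong\z{}\langle\eta_n\eta_{n+1}\rangle$ for $n\geq 3$.

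For part (2), I would compute $\pi_6(S^3)$ one prime at a time. At odd primes, Serre's $p$-local splitting of $\Omega S^3$ reduces the $p$-primary piece to known data, giving $\Z/3\langle\alpha\rangle$ at $p=3$ and trivial groups at larger primes. At $p=2$, the EHP sequence
\[ \pi_6(S^5)\xra{P}\pi_4(S^2)\xra{E}\pi_5(S^3)\xra{H}\pi_5(S^5)\xra{P}\pi_3(S^2), \]
combined with the next EHP boundary $\pi_7(S^5)\to\pi_5(S^2)$ and the Hopf-invariant obstruction, pins down $\pi_6(S^3)_{(2)}\cong\Z/4\langle\nu'\rangle$; assembling the primary pieces yields $\pi_6(S^3)\cong\Z/12\langle\nu'\rangle$ and forces $\alpha=4\nu'$. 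For $\pi_7(S^4)$, James' splitting $\Omega S^4\simeq S^3\times\Omega S^7$ induced by the Hopf invariant one element $\nu_4$ gives $\pi_7(S^4)\cong\pi_6(S^3)\oplus\pi_7(S^7)=\Z/12\langle\Sigma\nu'\rangle\oplus\Z\langle\nu_4\rangle$. Stably, Freudenthal then yields $\pi_{n+3}(S^n)\cong\Z/24$ for $n\geq 5$, and the identity $2\nu_n=\Sigma^{n-3}\nu'$ is read off from the first nonstable boundary in the EHP sequence.

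For part (3), the stable vanishing $\pi^s_4=0$ is classical (Serre's mod-$p$ calculation, or the $E_2$-page of the Adams spectral sequence), and the small unstable groups $\pi_7(S^3)$, $\pi_8(S^4)$, $\pi_9(S^5)$ are then pinned down by the EHP sequence combined with the known Whitehead products $[\iota_n,\iota_n]$ and the composition products already computed in parts (1) and (2).

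The main obstacle I expect is the precise multiplicative data --- $\eta^3=6\nu'$, $2\nu_n=\Sigma^{n-3}\nu'$, and $\eta\nu_n=0$ stably for $n\geq 6$ --- since these identities are not formal consequences of Freudenthal or the EHP sequence alone. Pinning them down requires either Toda bracket arguments (for instance exploiting relations of the form $\nu\in\langle\eta,2,\eta\rangle$-type brackets and the indeterminacy analysis that goes with them) or a cell-by-cell study of the low-dimensional skeleta of $S^n$. In practice the cleanest proof simply records the relevant entries of Toda's tables, which is the route the citation in the statement already signals.
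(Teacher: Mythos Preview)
The paper does not prove this lemma at all: it is stated with the citation ``cf.~\cite{TodaBook}'' and no proof environment, so the intended ``proof'' is simply to record the relevant entries of Toda's tables. Your own final sentence already identifies this, and your sketch---Hopf fibration, Freudenthal, EHP, Serre's $p$-local splitting---is a reasonable outline of how Toda's computations are actually carried out, so you are supplying strictly more than the paper does. There is no discrepancy to flag beyond noting that the paper treats the lemma as a black-box citation rather than something to be re-derived.
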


\begin{lemma}[see \cite{HL-7mfd}]\label{lem:htps:Moore}
	Let $p$ be an odd prime and let $r\geq 1$.
	\begin{enumerate}
		\itemsep=2pt
		\item\label{Sn+2Pn+1} $\pi_{n+2}(P^{n+1}(p^r))\cong \pi^{n}(P^{n+2}(p^r))=0$ and 
		\[\pi_{n+2}(P^{n+1}(2^r))\left\{\begin{array}{ll}
			\Z/4\lra{\tilde{\eta}_1},&r=1;\\
			\z{}\lra{\tilde{\eta}_r}\oplus\z{}\lra{i_n\eta^2},&r\geq 2.
		\end{array}\right. \]
		Here $\tilde{\eta}_r$ satisfies the formulae
		\[q_{n+1}\tilde{\eta}_r=\eta,\quad 2\tilde{\eta}_1=i_n\eta^2.\]
		Dually, $[P^{n+2}(2^r),S^n]\cong\Z/4$ or $\z{}\oplus\z{}$ with a generator $\bar{\eta}_r$ satisfying 
		\[\bar{\eta}_ri_{n+1}=\eta,\quad 2\bar{\eta}_1=\eta^2 q_{n+2}.\]

		\item $\pi_8(P^6(3^r))\cong\Z/3\langle i_5\Sigma^2\alpha\rangle$ and $\pi_8(P^6(p^r))=0$ for $p\geq 5$.
		\item\label{htps:S7P4} $\pi_7(P^4(3^r))\cong\Z/3\lra{\Sigma \tilde{\alpha}_r}$ and $\pi_7(P^4(p^r))=0$ for $p\geq 5$, where $\tilde{\alpha}_r\in \pi_6(P^3(3^r))\cong\Z/3$ satisfies the formulae 
		\begin{equation*}
			q_3\tilde{\alpha}_r=\alpha,\quad B(\chi^r_s)\tilde{\alpha}_r=\tilde{\alpha}_s \text{ for }r\leq s.
		\end{equation*}
		Moreover, the suspension $E_\sharp\colon \pi_{n+3}(P^n(p^r))\to \pi_{n+4}(P^{n+1}(p^r))$ is an isomorphism for $n\geq 4$.

		\item\label{htps:S9P5} $\pi_9(P^5(p^r))=0$.
	\end{enumerate}
	
\end{lemma}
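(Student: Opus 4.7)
The plan is to derive everything from the cofibre sequence
\[S^{n-1}\xra{p^r} S^{n-1}\xra{i_{n-1}} P^n(p^r)\xra{q_n} S^n\xra{-p^r} S^n\]
together with the sphere data of Lemma \ref{lem:Toda}. First I would extract a short exact sequence
\[0\to \pi_k(S^{n-1})\otimes \Z/p^r \xra{i_{n-1,\sharp}} \pi_k(P^n(p^r)) \xra{q_{n,\sharp}} \pi_k(S^n)[p^r] \to 0,\quad(\ast)\]
where $(-)[p^r]$ denotes the $p^r$-torsion subgroup. In the Blakers--Massey range $k\leq 2n-3$ this follows from the long exact sequence of the pair $(P^n(p^r),S^{n-1})$, and at odd primes $p$ the range can be extended to all the $k$ we need via the Cohen--Moore--Neisendorfer product decomposition of $\Omega P^n(p^r)_{(p)}$, in which $S^{n-1}_{(p)}$ appears as a loop-space retract.

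For Part (1), substituting $n\mapsto n+1$ and $k=n+2$ in $(\ast)$ is valid for $n\geq 3$. At odd $p$ the groups $\pi_{n+2}(S^n)=\pi_{n+2}(S^{n+1})=\z{}$ vanish $p$-locally, giving $\pi_{n+2}(P^{n+1}(p^r))=0$, and the vanishing of $[P^{n+2}(p^r),S^n]$ follows dually. At $p=2$, $(\ast)$ becomes $0\to \z{}\lra{i_n\eta^2}\to \pi_{n+2}(P^{n+1}(2^r))\to \z{}\lra{\eta}\to 0$. A lift $\tilde{\eta}_r$ with $q_{n+1}\tilde{\eta}_r=\eta$ exists because $2^r\eta=0$, so the group is generated by $\tilde\eta_r$ and $i_n\eta^2$. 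The extension is determined by the Toda bracket $2\tilde{\eta}_r\in \langle i_n, 2^r,\eta\rangle$: for $r=1$ the identity $\langle 2,\eta,2\rangle=\eta^2$ (with the indeterminacy trivial in this degree) yields $2\tilde\eta_1=i_n\eta^2$, hence order $4$; for $r\geq 2$ the bracket contains zero and $\tilde\eta_r$ can be chosen with $2\tilde\eta_r=0$. The corresponding formulas for $\bar\eta_r\in [P^{n+2}(2^r),S^n]$ follow dually by applying $[-,S^n]$ to the cofibre sequence defining $P^{n+2}(2^r)$.

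Parts (2) and (4) follow directly from $(\ast)$ at odd primes together with Lemma \ref{lem:Toda}: in Part (2), $\pi_8(S^5)=\Z/24$ has $3$-primary part $\Z/3\lra{\Sigma^2\alpha}$ while $\pi_8(S^6)=\z{}$ is odd-torsion-free, yielding $\pi_8(P^6(3^r))\cong \Z/3\lra{i_5\Sigma^2\alpha}$ and $\pi_8(P^6(p^r))=0$ for $p\geq 5$; in Part (4), $\pi_9(S^4)$ and $\pi_9(S^5)=\z{}\lra{\nu_5\eta_8}$ are $2$-primary, so $\pi_9(P^5(p^r))=0$ for all odd $p$. For Part (3), applying $(\ast)$ at $n=3$, $k=6$, $p=3$ uses the $3$-primary part $\Z/3\lra{\alpha}$ of $\pi_6(S^3)=\Z/12$ to produce $\pi_6(P^3(3^r))\cong \Z/3\lra{\tilde\alpha_r}$ with $q_3\tilde\alpha_r=\alpha$, and suspending gives $\pi_7(P^4(3^r))\cong \Z/3\lra{\Sigma\tilde\alpha_r}$. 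The isomorphism $E_\sharp\colon \pi_{n+3}(P^n(p^r))\to \pi_{n+4}(P^{n+1}(p^r))$ for $n\geq 4$ then follows from Freudenthal in the range widened by the CMN decomposition; vanishing at $p\geq 5$ is automatic from the absence of $p$-torsion in the sphere groups; and $B(\chi_s^r)\tilde\alpha_r=\tilde\alpha_s$ is forced by naturality of $(\ast)$ in the coefficient.

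The hard part will be the extension problem at $p=2$ in Part (1): one must carefully track the indeterminacy of $\langle i_n, 2^r,\eta\rangle$ to distinguish $r=1$ (where $2\tilde\eta_1=i_n\eta^2$) from $r\geq 2$ (where the representative can be chosen of order $2$). A secondary technical point is verifying that the CMN decomposition indeed extends $(\ast)$ beyond the Blakers--Massey range at odd primes, which is what allows Part (3) to be handled in this uniform way.
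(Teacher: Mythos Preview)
The paper does not supply its own proof of this lemma; it simply records the statement and cites \cite{HL-7mfd} as the source. Your sketch therefore goes well beyond what the paper offers, and the route you outline is the standard one: feed Toda's sphere computations (Lemma~\ref{lem:Toda}) into the long exact sequence attached to the cofibre $S^{n-1}\xra{p^r}S^{n-1}\to P^n(p^r)$, and resolve the single nontrivial extension at $p=2$ via the Toda identity $\langle 2,\eta,2\rangle=\{\eta^2\}$ (with vanishing indeterminacy), which forces $2\tilde\eta_1=i_n\eta^2$, while for $r\geq 2$ the relevant bracket $\langle 2,\eta,2^r\rangle$ contains $2^{r-1}\eta^2=0$ and hence is zero. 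This is correct.

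One point to tighten: your sequence $(\ast)$ in the displayed form is only guaranteed by Blakers--Massey for $k\le 2n-3$, and in Part~(\ref{htps:S9P5}) you have $k=9$, $n=5$, outside that range. The CMN decomposition you invoke does salvage this at odd primes, but you should state precisely which retract of $\Omega P^5(p^r)$ you are using. A quicker alternative for (\ref{htps:S9P5}) specifically is to note that $P^5(p^r)$ has $p$-primary reduced homology, hence $p$-primary homotopy by Serre's mod-$\mathcal C$ theory, and then argue that $\pi_9(P^5(p^r))_{(p)}$ vanishes because both $\pi_9(S^4)$ and $\pi_9(S^5)$ are purely $2$-primary; this avoids having to push $(\ast)$ beyond the excision range.
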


\begin{lemma}\label{lem:htpgrps}
	Let $p$ be an odd prime and let $r\geq 1,n\geq 3$ be integers.
	\begin{enumerate}
		\item\label{htpgrps:Ceta:stable} $\pi_n(C^{n+2}_\eta)\cong\Z\langle i_n^\eta\rangle$, $\pi_{n+2}(C^{n+2}_\eta)\cong\Z\langle \tilde{\zeta}\rangle$, $[P^{n+2}(p^r),C^{n+2}_\eta]\cong\zp{r}\lra{\tilde{\zeta}q_{n+2}}$, $\pi_{n+1}(C^{n+2}_\eta)=[P^{n+1}(p^r),C^{n+2}_\eta]=0$;\\
		dually, $\pi^{n+2}(C^{n+2}_\eta)\cong\Z\langle q_{n+2}^\eta\rangle$, $\pi^{n}(C^{n+2}_\eta)\cong\Z\langle \bar{\zeta}\rangle$, $[C^{n+2}_\eta,P^{n+1}(p^r)]\cong\zp{r}\langle i_n\bar{\zeta}\rangle$, $\pi^{n+1}(C^{n+2}_\eta)=[C^{n+2}_\eta,P^{n+2}(p^r)]=0$,  where $\tilde{\zeta}$ and $\bar{\zeta}$ satisfies the formulae
		\[q_{n+2}^\eta\tilde{\zeta}=2\cdot \id_{n+2},\quad \bar{\zeta}i_n^\eta=2\cdot \id_n, \quad \tilde{\zeta}q_{n+2}^\eta+i_n^\eta\bar{\zeta}=2\cdot\id_{\eta}.\]
	
		\item\label{htpgrps:Ceta-unst1} $\pi_6(C^5_\eta)\cong\Z/6\langle i_3^\eta\nu'=\Sigma \pi_2 \rangle$, $\pi_7(C^6_\eta)\cong\Z\langle i_4^\eta \nu_4 \rangle\oplus\Z/6\langle i_4^\eta\Sigma \nu'\rangle$ and $\pi_8(C^7_\eta)\cong\Z/12\langle i_5^\eta\nu_5\rangle$, where $\pi_2\colon S^5\to \C P^2$ is the canonical projection. Moreover, there hold formulae
		\begin{equation*}
			i_3^\eta\eta^3=0,\quad  \tilde{\zeta}\eta=3 i_3^\eta\nu'\in\pi_6(C^5_\eta).
		\end{equation*}
		\item\label{htpgrps:Ceta-unst2} $\pi_7(C^5_\eta)\cong\Z\lra{\xi}$, $\pi_8(C^6_\eta)\cong\z{}\lra{\Sigma \xi=i_4^\eta\nu_4 \eta_7}$ and $\pi_{n+4}(C^{n+2}_\eta)=0$ for $n\geq 5$.  
		\item\label{htpgrps:Ceta-unst3} $\pi_{10}(C^7_\eta)\cong\Z/24\lra{\tilde{\nu}_7}\oplus\z{}\lra{i_5^\eta\nu_5\eta^2}$ and $\pi_{11}(C^8_\eta)\cong \Z\lra{i_6^\eta[\id_6,\id_6]}\oplus\Z/24\lra{\Sigma\tilde{\nu}_7}$, where $\tilde{\nu}_7$ satisfies the formulae 
		\[q_7^\eta\tilde{\nu}_7=-\nu_7,\quad \tilde{\zeta}\nu_7=-2\tilde{\nu}_7+\epsilon\cdot i_5^\eta\nu_5\eta^2, ~\epsilon\in\{0,1\}.\]   
		Moreover, $\tilde{\nu}_7=\Sigma \tilde{\nu}_6$  for some $\tilde{\nu}_6\in \pi_9(C^6_\eta)$.

		
	\end{enumerate}
	\begin{proof}
		(1) The stable groups in  (\ref{htpgrps:Ceta:stable}) refer to \cite{Baues85} or \cite{lipc22}. (2) The groups $\pi_{6+i}(C^{5+i}_\eta)$ for $i=0,1,2$ are due to \cite[Proposition 8.2 and (13)]{Mukai82}; the formula $\tilde{\zeta}\eta=3 i_3^\eta\nu'$ is clear. 
		(3) The groups $\pi_{7+i}(C^{5+i}_\eta)$ are due to \cite[Proposition 8.3, 8.4 and Lemma 8.5]{Mukai82}.
		(4) By \cite[the top line of page 192 and Lemma 10.1]{Mukai82}, there is a split short exact sequence of abelian groups 
		\[\pi_{11}(S^7)=0\to \pi_{10}(S^5)\xra{(i_5^\eta)_\sharp}\pi_{10}(C^7_\eta)\xra{(q_7^\eta)_\sharp}\pi_{10}(S^7)\to 0.\]
		The group $\pi_{11}(C^8_\eta)$ refers to \cite[Proposition 10.2 (1)]{Mukai82}.
		
	For the ``moreover" statement, consider the EHP exact sequence 
		\[\pi_9(C^6_\eta)\xra{E_\sharp}\pi_{10}(C^7_\eta)\xra{H_\sharp}\pi_{10}(C^7_\eta\wedge C^6_\eta).\]
		We have $\pi_{10}(C^7_\eta\wedge C^6_\eta)\cong \pi_{10}(C^{11}_\eta\vee S^{11})=0$, and hence $E_\sharp$ is an epimorphism and $\tilde{\nu}_7$ is a suspension.
	\end{proof}
\end{lemma}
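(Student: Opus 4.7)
The plan is to run the Puppe cofibre sequence
\[S^{n+1}\xra{\eta_n}S^n\xra{i_n^\eta}C^{n+2}_\eta\xra{q_{n+2}^\eta}S^{n+2}\]
against $[S^k,-]$ (via the long exact sequence of the pair $(C^{n+2}_\eta,S^n)$, with Blakers--Massey excision identifying $\pi_k(C^{n+2}_\eta,S^n)\cong\pi_{k-1}(S^{n+1})$ in range) and, for the dual cohomotopy statements in (1), against $[-,X]$. The connecting map reduces to post- (or pre-) composition with $\eta_n$, so feeding in Toda's table (Lemma \ref{lem:Toda}) together with the relations $\eta^3=6\nu'$, $2\nu_n=\Sigma^{n-3}\nu'$ and $\eta\nu_n=0$ for $n\geq 6$ reduces each group to an explicit extension. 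The Moore-space entries in (1) then follow by applying the same machine to $S^{n-1}\xra{p^r}S^{n-1}\to P^n(p^r)\to S^n$ and using that $\pi_\ast(C^{n+2}_\eta)$ is $2$-primary in the relevant range while $p$ is odd.

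For part (1), the stable range makes the cofibre sequence an exact triangle, and the generators are determined by the characterising formulae: $q_{n+2}^\eta\tilde\zeta=2\cdot\id_{n+2}$ and $\bar\zeta i_n^\eta=2\cdot\id_n$ because the order-two class $\eta_n$ annihilates $2$, and $\tilde\zeta q_{n+2}^\eta+i_n^\eta\bar\zeta=2\cdot\id_\eta$ is the cellular expression of multiplication by $2$ on the two-cell complex $C^{n+2}_\eta$, reflecting its Spanier--Whitehead self-duality. For parts (2) and (3) the unstable pair-sequence calculations proceed degree by degree; the classes $i_n^\eta\nu'$, $i_n^\eta\nu_n$ and $\Sigma\pi_2$ lift from $\pi_\ast(S^n)$, and the orders $6,12,24$ come from the relations above combined with $\tilde\zeta\eta=3 i_3^\eta\nu'$, which itself follows from $q_5^\eta(\tilde\zeta\eta)=2\eta=0$ together with a Toda-bracket identification of the image in $(i_3^\eta)_\sharp\pi_6(S^3)\cong\Z/6$. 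The unexpected infinite-cyclic $\pi_7(C^5_\eta)\cong\Z\lra{\xi}$ in (3) emerges from the pair-LES because $\eta$-multiplication on the neighbouring pieces of $\pi_\ast(S^4)$ is surjective onto the relevant $\Z/2$-quotient, leaving an infinite cyclic kernel.

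Part (4) is the most delicate. The direct-sum decompositions of $\pi_{10}(C^7_\eta)$ and $\pi_{11}(C^8_\eta)$ come from the split short exact sequence
\[0\to\pi_{10}(S^5)\xra{(i_5^\eta)_\sharp}\pi_{10}(C^7_\eta)\xra{(q_7^\eta)_\sharp}\pi_{10}(S^7)\to 0\]
of \cite[Lemma 10.1]{Mukai82}, which I would cite. The relation $\tilde\zeta\nu_7=-2\tilde\nu_7+\epsilon\cdot i_5^\eta\nu_5\eta^2$ I would derive by composing $q_7^\eta\tilde\zeta=2\cdot\id_7$ with $\nu_7$: the sum $\tilde\zeta\nu_7+2\tilde\nu_7$ lies in $\ker(q_7^\eta)_\sharp=(i_5^\eta)_\sharp\pi_{10}(S^5)$, and the $\Z/2$-summand $\lra{i_5^\eta\nu_5\eta^2}$ of this image supplies the indeterminacy $\epsilon\in\{0,1\}$. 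For the ``moreover'' statement I would invoke the EHP sequence
\[\pi_9(C^6_\eta)\xra{E_\sharp}\pi_{10}(C^7_\eta)\xra{H_\sharp}\pi_{10}(C^7_\eta\wedge C^6_\eta),\]
with the smash-splitting $C^7_\eta\wedge C^6_\eta\simeq C^{11}_\eta\vee S^{11}$ (a consequence of the classical suspension-splitting of $\mathbb{C}P^2\wedge \mathbb{C}P^2$), so that $\pi_{10}$ of the target vanishes by connectivity and $E_\sharp$ is surjective, producing the desuspension $\tilde\nu_7=\Sigma\tilde\nu_6$. The main technical obstacles are pinning down $\epsilon$ and verifying the smash decomposition; both are tabulated in \cite{Mukai82} and I do not attempt to reprove them.
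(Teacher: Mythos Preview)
Your proposal is correct and follows essentially the same route as the paper: the paper's proof is almost entirely by citation (to \cite{Baues85,lipc22,Mukai82}), and what you sketch---the long exact sequence of the pair $(C^{n+2}_\eta,S^n)$ with Blakers--Massey excision, fed Toda's relations, plus the EHP argument for the desuspension $\tilde\nu_7=\Sigma\tilde\nu_6$---is exactly the computation those references carry out. In particular, your treatment of part~(4), including the derivation of $\tilde\zeta\nu_7=-2\tilde\nu_7+\epsilon\cdot i_5^\eta\nu_5\eta^2$ from $q_7^\eta\tilde\zeta=2\cdot\id_7$ and the EHP sequence with $\pi_{10}(C^7_\eta\wedge C^6_\eta)\cong\pi_{10}(C^{11}_\eta\vee S^{11})=0$, matches the paper verbatim.
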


\section{Stable cohomotopy groups}\label{sec:chtp-stable}

Let $S^n_r$ denote the $(n+r)$-th Postnikov section of $S^n$, which is defined by the principal homotopy fibre sequence 
	\[K_{n+r}(\pi_{n+r}(S^n))\to S^n_r\xra{p_r}S^n_{r-1}\to K_{n+r+1}(\pi_{n+r}(S^n)).\]
Let $\imath_r\colon S^n\to S^n_r$ be the canonical $(n+r+1)$-connected map such that $q_r\imath_r=\imath_{r-1}$. We say that the Postnikov tower of $S^n$ is \emph{stable up to $n+s$ stages} if the groups $\pi_{n+r}(S^n)$ are stable for each $r\leq s$, under the Freudenthal suspension isomorphism theorem. 

\begin{lemma}\label{lem:tower:Sn}
	Up to $n+6$ stages, the stable Postnikov tower of $S^n$ takes the form:
	\begin{equation*}
		\begin{tikzcd}
			K_{n+6}(\Z/2)\ar[r,"j_{6}"]&S^n_{6}\ar[d,"p_{6}"]&\\
			K_{n+3}(\Z/24)\ar[r,"j_{3}"]&S^n_{3}\ar[d,"p_{3}"]\ar[r,"\ol{Sq^4_{\Z/8}}"]&K_{n+7}(\z{})\\
			K_{n+2}(\Z/2)\ar[r,"j_{2}"]&S^n_{2}\ar[r,"{(\wtd{Sq^4_\Z},\wtd{\PP_\Z})}"]\ar[d,"p_{2}"]&K_{n+4}(\Z/8)\times K_{n+4}(\Z/3)\\
			K_{n+1}(\Z/2)\ar[r,"j_{1}"]&S^n_{1}\ar[r,"\ol{Sq^2}"]\ar[d,"p_{1}"]&K_{n+3}(\z{})\\
			&K_n(\Z)\ar[r,"Sq^2_\Z"]&K_{n+2}(\z{})	
		\end{tikzcd}
	\end{equation*}
Here $Sq^i_G=Sq^i\circ \rho_2$ and $\PP_\Z=\PP_3\rho_3$;
$\ol{Sq^2}$ and $\ol{Sq^4_{\Z/8}}$ satisfy the formulae
\begin{equation}\label{eq:Sq2Sq4}
\ol{Sq^2}j_1=Sq^2,\quad  \ol{Sq^4_{\Z/8}}j_3=Sq^4_{\Z/8};
\end{equation}
$\wtd{Sq^4_\Z}$ and  $\wtd{\PP_\Z}$ satisfy the homotopy commutative diagrams 
\begin{equation}\label{wtd-Sq4-P1}
	\begin{tikzcd}
	S^n_2\ar[d,"p_1p_2"]\ar[r,"\wtd{Sq^4_\Z}"]&K_{n+4}(\Z/8)\ar[d,"\rho_2"]\\
	K_n(\Z)\ar[r,"Sq^4_\Z"]&K_{n+4}(\z{})
\end{tikzcd},\quad \begin{tikzcd}
	S^n_2\ar[d,"p_1p_2"]\ar[r,"\wtd{\PP_\Z}"]&K_{n+4}(\Z/3)\ar[d,equal]\\
	K_n(\Z)\ar[r,"\PP_\Z"]&K_{n+4}(\Z/3)
\end{tikzcd}.
\end{equation}
\begin{proof}
	See \cite[Figure 1, page 122]{MT68} or  \cite[Lemma 2.5]{GS21}.
\end{proof}
\end{lemma}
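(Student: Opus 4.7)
The plan is to build the stable Postnikov tower inductively from $S^n_0 = K_n(\Z)$ and identify each $k$-invariant with a recognizable primary cohomology operation. The relevant stable stems $\pi_{n+r}(S^n)$ for $0 \leq r \leq 6$ are $\Z, \z{}, \z{}, \Z/24, 0, 0, \z{}$ by Lemma~\ref{lem:Toda}, with stable generators $\id_n, \eta, \eta^2, \nu, 0, 0, \nu^2$. The principal fibration at stage $r$ is classified by a $k$-invariant in $H^{n+r+1}(S^n_{r-1}; \pi_{n+r}(S^n))$, and the content of the lemma is the explicit identification of each such class.

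For the first $k$-invariant, I would invoke Serre's calculation $H^{n+2}(K_n(\Z); \z{}) \cong \z{}\lra{Sq^2_\Z}$; survival of $\eta$ forces non-triviality, so $k_0 = Sq^2_\Z$. For the second stage, the Adem relation $Sq^2 Sq^2 = Sq^3 Sq^1$ together with $Sq^1 \rho_2 = 0$ on integral classes gives $Sq^2 \circ Sq^2_\Z = 0$, so the obstruction to extending $Sq^2$ from the fibre $K_{n+1}(\z{})$ across $S^n_1$ vanishes; this yields $\ol{Sq^2}$ satisfying $\ol{Sq^2} j_1 = Sq^2$. Among possible extensions, the one realizing the Postnikov $k$-invariant is singled out by the requirement that the corresponding secondary operation detect $\eta^2$.

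The principal obstacle is the third stage, where $\pi_{n+3}(S^n) = \Z/24 \cong \Z/8 \oplus \Z/3$ forces a split of the $k$-invariant into $2$- and $3$-primary parts. I would identify the $2$-primary lift $\wtd{Sq^4_\Z}\colon S^n_2 \to K_{n+4}(\Z/8)$ via the classical detection of $\nu$ by $Sq^4$ on its mod-$2$ mapping cone, refining to $\Z/8$ coefficients using that $\nu$ has order $8$ at the prime $2$; the compatibility square in (\ref{wtd-Sq4-P1}) is then imposed by naturality of the mod-$2$ reduction of cohomology operations. For the $3$-primary part, Toda's generator $\alpha = 4\nu' \in \pi_6(S^3)$ is detected by $\PP$, yielding $\wtd{\PP_\Z}$; here the lift is essentially unique because the relevant $3$-local cohomology group of $S^n_2$ is one-dimensional.

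Since $\pi_{n+4}(S^n) = \pi_{n+5}(S^n) = 0$ the tower is unchanged at those intermediate stages. For the final stage, $\pi_{n+6}(S^n) = \z{}\lra{\nu^2}$ and the same strategy identifies the $k$-invariant with $\ol{Sq^4_{\Z/8}}$, defined as an extension of $Sq^4_{\Z/8}$ from the fibre $K_{n+3}(\Z/24)$ across $S^n_3$. Existence of this extension follows from the vanishing of the obstruction $Sq^4 \circ Sq^4_{\Z/8}$ restricted to $S^n_2$, which is a consequence of the Adem relations combined with the preceding $k$-invariants being killed on $S^n_3$. The diagram thus produced matches the classical calculation of Mosher-Tangora and the version of Gray-Stelzer, so the proof ultimately reduces to verifying that the normalization conventions (tildes and bars) are consistent with the formulas (\ref{eq:Sq2Sq4}) and (\ref{wtd-Sq4-P1}).
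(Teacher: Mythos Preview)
The paper's proof consists entirely of a citation to Mosher--Tangora \cite{MT68} and \cite{GS21}; it does not carry out any of the construction. Your proposal instead sketches the actual inductive identification of the $k$-invariants that those references perform: computing $H^{n+r+1}(S^n_{r-1};\pi_{n+r}(S^n))$ via Serre's description of Eilenberg--MacLane cohomology, pinning down the class by detection of the stable generator ($\eta$ by $Sq^2$, $\eta^2$ by the secondary operation built from the Adem relation $Sq^2Sq^2=Sq^3Sq^1$, $\nu$ by $Sq^4$ and $\PP$ after splitting $\Z/24\cong\Z/8\oplus\Z/3$, and $\nu^2$ by an extension of $Sq^4_{\Z/8}$). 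This is precisely the content of the cited sources, so your write-up is a genuine proof where the paper gives none. Two small caveats: your phrasing of the obstruction at the last stage is imprecise --- the obstruction to extending $Sq^4_{\Z/8}$ from the fibre $K_{n+3}(\Z/24)$ across $S^n_3$ is the transgression in the Serre spectral sequence of $K_{n+3}(\Z/24)\to S^n_3\to S^n_2$, not literally ``$Sq^4\circ Sq^4_{\Z/8}$ restricted to $S^n_2$''; and once existence of an extension is established you still need an argument (e.g.\ comparison with the mapping cone of $\nu^2$) to see that the particular extension serving as the $k$-invariant satisfies the normalization in~(\ref{eq:Sq2Sq4}). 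Neither point is a real gap, and your outline is correct.
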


Note that in \cite{MT68} and \cite{GS21}, $\ol{Sq^2}$ and $\ol{Sq^4_{\Z/8}}$ are denoted by $\alpha$ and $P$, respectively; $\wtd{Sq^4_\Z}$ and $\wtd{\PP_\Z}$ are  denoted by ``$Sq^4\iota_n$'' and ``$Sq^4\iota_{n+3}$'', respectively.  In the paper, we use the above notation to avoid confusion: $\alpha$ is a generator of the $3$-primary component of $\pi_6(S^3)$, see  Lemma \ref{lem:Toda}; $P$ is usually used to denote the Peterson space $P^n(G)$ with the unique nontrivial reduced integral cohomology group $G$ in dimension $n$. 

For any abelian group $G$,  we denote by $QH^m(X,f)$ the cokernel of a homomorphism  $f\colon G\to H^m(X;\z{})$. 
Using the first nontrivial stage of the Postnikow tower in Lemma \ref{lem:tower:Sn}, we have the following characterization of the cohomotopy groups $\pi^{n}(X)$ of a complex of dimension $n+1$, where the splitting condition is due to Taylor \cite[Section 6.1]{Taylor2012}.

\begin{lemma}\label{lem:chtp=-1}
	Let $X$ be a CW complex of dimension at most $n+1$, $n\geq 3$. There is a short exact sequence of abelian groups
	\[0\to QH^{n+1}(X,Sq^2_\Z)\to \pi^n(X)\xra{h^n}H^n(X)\to 0,\]
	which splits if and only if $Sq^2_\Z\big(H^{n-1}(X;\Z)\big)=Sq^2\big(H^{n-1}(X;\z{})\big)$.
\end{lemma}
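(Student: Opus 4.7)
The plan is to leverage the dimension hypothesis $\dim X \leq n+1$ to reduce $\pi^n(X)$ to the first nontrivial Postnikov stage of $S^n$ displayed in Lemma \ref{lem:tower:Sn}, namely
\begin{equation*}
K_{n+1}(\Z/2) \xra{j_1} S^n_1 \xra{p_1} K_n(\Z) \xra{Sq^2_\Z} K_{n+2}(\z{}).
\end{equation*}
Since $\imath_1 \colon S^n \to S^n_1$ is an isomorphism on $\pi_i$ for $i \leq n+1$ and $\pi_{n+2}(S^n_1) = 0$, it is $(n+2)$-connected, so standard obstruction theory produces a bijection $\pi^n(X) \xra{\cong} [X, S^n_1]$. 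Moreover, the inequality $\dim X \leq n+1 \leq 2n-2$ for $n \geq 3$ places us in the stable range, so $\pi^n(X)$ inherits a natural abelian group structure compatible with this bijection and with the Puppe sequence that follows.

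Next I would apply $[X,-]$ to the above fibre sequence, extended one further Puppe step to the left (the connecting homomorphism being the looped $k$-invariant, which is $Sq^2_\Z$ acting in the cohomology degree shifted down by one), to obtain the exact sequence of abelian groups
\begin{equation*}
H^{n-1}(X;\Z) \xra{Sq^2_\Z} H^{n+1}(X;\z{}) \xra{(j_1)_\sharp} \pi^n(X) \xra{h^n} H^n(X) \xra{Sq^2_\Z} H^{n+2}(X;\z{}).
\end{equation*}
The dimension hypothesis forces $H^{n+2}(X;\z{}) = 0$, so $h^n$ is surjective. Identifying $\ker((j_1)_\sharp) = \im(Sq^2_\Z)$ and quotienting yields the short exact sequence
\begin{equation*}
0 \to QH^{n+1}(X, Sq^2_\Z) \to \pi^n(X) \xra{h^n} H^n(X) \to 0.
\end{equation*}

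For the splitting criterion I would follow Taylor's analysis in \cite[\S 6.1]{Taylor2012}. Writing $Sq^2_\Z = Sq^2 \circ \rho_2$, the classifying map of the fibration $p_1$ factors through $K_n(\z{})$, so one can attempt to construct an additive section $H^n(X) \to \pi^n(X)$ by lifting individual classes along $p_1$ and measuring the failure of additivity modulo $\im(Sq^2_\Z) \subseteq H^{n+1}(X;\z{})$. The ambiguity in these lifts is exactly the coset $Sq^2(H^{n-1}(X;\z{}))/Sq^2_\Z(H^{n-1}(X;\Z))$ inside $QH^{n+1}(X, Sq^2_\Z)$, and it vanishes precisely when $Sq^2_\Z(H^{n-1}(X;\Z)) = Sq^2(H^{n-1}(X;\z{}))$, giving the claimed split-iff condition. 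The main obstacle is this final step: while the existence of individual lifts is immediate from exactness, promoting a set-theoretic section to a group homomorphism amounts to showing the vanishing of the extension class in $\Ext^1(H^n(X), QH^{n+1}(X, Sq^2_\Z))$, and this requires the careful comparison of $Sq^2 \circ \rho_2$ with $Sq^2$ on mod-$2$ classes not arising from integral ones.
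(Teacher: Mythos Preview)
Your proposal is correct and follows essentially the same approach as the paper: the paper does not give a detailed proof, but merely remarks that the short exact sequence arises from the first nontrivial stage of the Postnikov tower in Lemma~\ref{lem:tower:Sn} and that the splitting criterion is due to Taylor \cite[Section~6.1]{Taylor2012}. Your derivation of the exact sequence via $[X,S^n_1]\cong\pi^n(X)$ and the Puppe sequence, together with the deferral to Taylor for the if-and-only-if splitting condition, is exactly this.
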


\begin{example}\label{ex:chtp=-1}
	Let $M$ be a closed orientable simply connected $(n+1)$-manifold, then 
	\begin{enumerate}
		\item $\pi^{n+1}(M)\cong\Z$ by the Hopf degree theorem, and $\pi^i(M)=0$ for $i=1$ or $i>n$;
		\item $\pi^{n}(M)\cong \z{1-\varepsilon}$, where $\varepsilon=0$ if $M$ is spin, otherwise $\varepsilon=1$.
	\end{enumerate}
\end{example}

We apply Lemma \ref{lem:tower:Sn} to analyze the stable chomotopy groups of manifolds as follows.

\begin{theorem}\label{thm:chtp-stable}
	Let $X$ be a CW complex such $\dim(X)\leq n+m$ and $H^n(X)$ is $2$-torsion-free for $n\geq 5$. 
	\begin{enumerate}[1.]
		\itemsep=2pt
		\item\label{chtp=-2} If $m=2$, $n\geq 4$ and $H^{n+1}(X;\z{})=0$, then there is a short exact sequence of abelian groups 
		\begin{equation}\label{SES:chtp=-2}
			0\to QH^{n+2}(X,\Omega \ol{Sq^2})\to \pi^n(X)\xra{\imath}\ker(Sq^2_\Z)\to 0,
		\end{equation}
		where $\imath$ is the  homomorphism such that the composition
		\[h^n\colon \pi^n(X)\xra{\imath}\ker(Sq^2_\Z)\hookrightarrow H^n(X)\]
		is the $n$-th cohomotopy Hurewicz homomorphism.
		
		The extension (\ref{SES:chtp=-2})  splits if $H^n(X)$ is $2$-torsion-free. 
		
		\item\label{chtp=-3} If  $H^{n+1}(X;\z{})=H^{n+2}(X;\z{})=0$, 
	 then in each case of (i) $m=3$ and $n\geq 5$, (ii) $m=4$ and $n\geq 6$, (iii) $m=5$ and $n\geq 7$, there is a short exact sequence of abelian groups
		\begin{equation}\label{SES:cohtp=-3}
			0\to G_{24} \to \pi^n(X)\xra{\jmath} \ker(\PP_\Z)\to 0,
		\end{equation}
		where $G_{24}$ is a subgroup of $H^{n+3}(M;\Z/24)$ and $\jmath$ is a homomorphism such that the composition 
		\[h^n\colon \pi^n(X)\xra{\jmath}\ker(\PP_\Z)\hookrightarrow H^n(X)\]
		is the $n$-th cohomotopy Hurewicz homomorphism.
		
		\item\label{chtp=-6} For $m=6$, $n\geq 8$,  if $H^{n+i}(X;\z{})=H^{n+3}(X;\Z/3)=0 \text{ for } i=1,2,3$,  then there is a split short exact sequence of abelian groups
		\begin{equation}\label{SES:chtp=-6}
			0\to QH^{n+6}(X,\Omega\ol{Sq^4_{\Z/8}})\to \pi^n(X)\to \ker(\PP_\Z)\to 0.
		\end{equation}

	\end{enumerate}
	
	\begin{proof}
	For any complex $X$, the stable Postnikov tower in Lemma \ref{lem:tower:Sn} induces the following four exact sequences of groups (denoted by $(\star)$)
		\begin{align*}
	0\to	QH^{n+1}(X,Sq^2_\Z)\to [X,S^n_1]\xra{(p_1)_\sharp}H^n(X)\xra{Sq^2_\Z} H^{n+2}(X;\z{}),\\
	0\to QH^{n+2}(X,\Omega \ol{Sq^2}) \to [X,S^n_2]\xra{(p_2)_\sharp}[X,S^n_1]\xra{(\ol{Sq^2})_\sharp} H^{n+3}(X;\z{}),\\
	 H^{n+3}(X;\Z/24)\to [X,S^n_3]\xra{(p_3)_\sharp}[X,S^n_2]\xra{(\wtd{Sq^4_\Z},\wtd{\PP_\Z})_\sharp}H^{n+4}(X;\Z/24),\\
		0\to QH^{n+6}(X,\Omega \ol{Sq^4_{\Z/8}})\to [X,S^n_6]\xra{(p_6)_\sharp}[X,S^n_3]\xra{(\ol{Sq^4_{\Z/8}})_\sharp}H^{n+7}(X;\z{}).
		\end{align*} 

 (1)	By the assumption of (\ref{chtp=-2}), the first two sequences of ($\star$) become
\begin{align*}
0\to [X,S^n_1]\xra{(p_1)_\sharp}H^n(X;\Z)\xra{Sq^2_\Z}H^{n+2}(X;\z{}),\\
0\to QH^{n+2}(X,\Omega \ol{Sq^2})\to 	[X, S^n_2]\xra{(p_2)_\sharp}[X,S^n_1]\to 0.
\end{align*}
Since the canonical map $\imath_2\colon S^n\to S^n_2$ is $(n+3)$-connected, we have an isomorphism $\pi^n(X)\cong [X, S^n_2]$ for $\dim(X)\leq n+2$.
Thus we get the short exact sequence (\ref{SES:chtp=-2}), which is clearly splitting if $H^n(X)$ is $2$-torsion-free.
	
(2) Under the assumptions, the first three exact sequences of $(\star)$ take the form	
	\begin{align*}
	0\to [X,S^n_1]\xra{(p_1)_\sharp}H^n(X)\to 0,\\
	0\to [X,S^n_2]\xra{(p_2)_\sharp}[X,S^n_1]\xra{(\ol{Sq^2})_\sharp}H^{n+3}(X;\z{}),\\
 0\to G_{24}\xra{(j_3)_\sharp}[X,S^n_3]\xra{(p_3)_\sharp}[X,S^n_2]\xra{(\wtd{Sq^4_\Z},\wtd{\PP_\Z})_\sharp}H^{n+4}(X;\Z/24),
\end{align*}
where $G_{24}=H^{n+3}(X;\Z/24)/(\Omega\wtd{Sq^4_\Z},\Omega\wtd{\PP_\Z})_\sharp(X,\Omega S^n_2)$.
It follows that there hold isomorphisms
\[	[X,S^n_1]\cong H^n(X),\quad	[X,S^n_2]\cong \ker((\ol{Sq^2})_\sharp)\cong H^n(X),\]
where the last isomorphism holds because $H^n(X)$ is $2$-torsion-free. Consequently, the homomorphism $(\wtd{Sq^4_\Z})_\sharp$ doesn't change the group structure of the kernel of $(\wtd{Sq^4_\Z},\wtd{\PP_\Z})_\sharp$; by (\ref{wtd-Sq4-P1}), we get 
\[\ker(\wtd{Sq^4_\Z},\wtd{\PP_\Z})_\sharp\cong\ker(\PP_\Z)\subseteq H^n(X).\]

Since $S^n_5=S^n_4=S^n_3$, we have the isomorphism 
\[(\imath_3)_\sharp\colon \pi^n(X)\to [X,S^n_3]\] 
in the three cases:
(i) $\dim(X)\leq n+3$, $n\geq 5$, (ii) $\dim(X)\leq n+4$, $n\geq 6$, (iii) $\dim(X)\leq n+5$, $n\geq 7$.  Thus from third exact sequence above, we deduce the short exact sequence of abelian groups
\[0\to G_{24}\to \pi^n(X)\xra{\jmath} \ker(\PP_\Z)\to 0.\]
In the case $\dim(X)\leq n+3$ with $n\geq 5$, $ \ker(\PP_\Z)=H^n(X)$ is $2$-torsion-free and $\jmath$ is the $n$-th cohomotopy Hurewicz homomorphism. 
The proof of (\ref{chtp=-3}) is finished.

(3)  Under the assumptions, we have the following exact sequences
	\begin{align*}
	0\to [X,S^n_1]\xra{(p_1)_\sharp}H^n(X)\to 0,\quad 
	0\to [X,S^n_2]\xra{(p_2)_\sharp}[X,S^n_1]\to 0,\\
	0\to [X,S^n_3]\xra{(p_3)_\sharp}[X,S^n_2]\xra{(\wtd{Sq^4_\Z}_\sharp,\wtd{\PP_\Z})}H^{n+4}(X;\Z/8)\oplus H^{n+4}(X;\Z/3),\\
0\to QH^{n+6}(X,\Omega\ol{Sq^4_{\Z/8}})\to [X,S^n_6]\xra{(p_6)_\sharp}[X,S^n_3]\to 0.
\end{align*}
Similarly, $H^n(X)$ is $2$-torsion-free implying that 
\[[X,S^n_3]\cong \ker(\PP_\Z)\subseteq H^n(X).\]
When $\dim(X)\leq n+6$ with $n\geq 8$, we have $\pi^n(X)\cong [X,S^n_6]$.
Then the extension (\ref{SES:chtp=-6}) follows by the last exact sequence, and the splitting is clear.
	\end{proof}
\end{theorem}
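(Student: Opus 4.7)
The plan is to compute $\pi^n(X)$ by applying the contravariant functor $[X,-]$ to the principal Postnikov fibrations of $S^n$ assembled in Lemma \ref{lem:tower:Sn}. Since the canonical map $\imath_r\colon S^n\to S^n_r$ is $(n+r+1)$-connected, classical obstruction theory furnishes a bijection $\pi^n(X)\cong [X,S^n_r]$ whenever $\dim(X)\le n+r$; thus under the dimension hypotheses of each item it suffices to analyze $[X,S^n_2]$, $[X,S^n_3]$ (which coincides with $[X,S^n_4]$ and $[X,S^n_5]$ in the stable range since $\pi_{n+4}(S^n)=\pi_{n+5}(S^n)=0$), and $[X,S^n_6]$ respectively.

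Each principal fibre sequence $K_{n+r}(\pi_{n+r}S^n)\to S^n_r\xrightarrow{p_r} S^n_{r-1}\to K_{n+r+1}(\pi_{n+r}S^n)$ yields, upon application of $[X,-]$, a four-term exact sequence whose outer terms are cohomology groups of $X$ with coefficients in stable homotopy groups of spheres. Iterating upward from $S^n_1$ and invoking the vanishing hypotheses on $H^{n+i}(X;\Z/2)$ (and on $H^{n+3}(X;\Z/3)$ in part (3)), most of the intermediate cohomology groups collapse, which is essential to prevent spurious contributions from higher $k$-invariants and so produces the key $(\star)$-type sequences that organize each case.

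For item (1), after the collapse the bottom $k$-invariant $Sq^2_\Z$ identifies $\ker(Sq^2_\Z)\subseteq H^n(X)$ as the image of $(p_1)_\sharp$, and the next $k$-invariant $\Omega\overline{Sq^2}$ produces $QH^{n+2}(X,\Omega\overline{Sq^2})$ as the ``fibre'' contribution, giving the claimed extension. The splitting when $H^n(X)$ is $2$-torsion-free will follow by observing that the mod-$2$ reduction is then injective on $H^n$, mirroring Taylor's splitting criterion recalled in Lemma \ref{lem:chtp=-1}. For item (2), the same dismantling yields successively $[X,S^n_1]\cong H^n(X)$ and $[X,S^n_2]\cong\ker((\overline{Sq^2})_\sharp)$; the $2$-torsion-free hypothesis forces the latter to equal $H^n(X)$ itself. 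At the $S^n_3$ stage the $k$-invariant is the pair $(\wtd{Sq^4_\Z},\wtd{\PP_\Z})$, but the homotopy commutative squares in (\ref{wtd-Sq4-P1}) combined with $2$-torsion-freeness collapse the $\wtd{Sq^4_\Z}$ part, leaving only $\PP_\Z$ to cut out $\ker(\PP_\Z)$. The remaining quotient of $H^{n+3}(X;\Z/24)$ by the previous coboundary map is then identified as the subgroup $G_{24}$.

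Item (3) proceeds in parallel: all intermediate stages collapse under the stronger vanishing hypotheses, so only the bottom kernel $\ker(\PP_\Z)$ and the top quotient $QH^{n+6}(X,\Omega\overline{Sq^4_{\Z/8}})$ survive, and the splitting is automatic because the middle terms of the surrounding exact sequences vanish. The main technical obstacle I anticipate lies in verifying the splitting in (1) and the collapse of the $\wtd{Sq^4_\Z}$ component in (2): both demand careful tracking of the homotopy commutative squares (\ref{wtd-Sq4-P1}) to ensure that the integral operations factor through the $2$-torsion-free integral cohomology of $X$ rather than through mod-$8$ or mod-$2$ reductions that could otherwise contribute extra classes.
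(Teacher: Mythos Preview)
Your approach is essentially identical to the paper's: you apply $[X,-]$ to the stable Postnikov tower of Lemma \ref{lem:tower:Sn}, use the $(n+r+1)$-connectedness of $\imath_r$ to replace $\pi^n(X)$ by $[X,S^n_r]$ for the appropriate $r$, and then let the vanishing hypotheses on $H^{n+i}(X;\z{})$ collapse the intermediate terms of the four resulting exact sequences, invoking the commutative squares (\ref{wtd-Sq4-P1}) and the $2$-torsion-free hypothesis exactly where the paper does.

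One small slip: your stated reason for the splitting in part (1)---that ``the mod-$2$ reduction is then injective on $H^n$''---is not correct (e.g.\ $\Z\to\z{}$ is not injective) and is not what Taylor's criterion says. The actual reason, which the paper leaves implicit, is purely algebraic: the left-hand term $QH^{n+2}(X,\Omega\ol{Sq^2})$ is a $\z{}$-module while the right-hand term $\ker(Sq^2_\Z)\subseteq H^n(X)$ is $2$-torsion-free, and $\Ext^1$ of a $2$-torsion-free abelian group into any $2$-torsion group vanishes. The same remark applies to your justification of the splitting in part (3); ``middle terms vanish'' is not the mechanism---it is again the $\Ext$ vanishing between a $2$-torsion kernel and a $2$-torsion-free quotient.
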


 As stated in the Introduction, the cohomotopy sets of connected manifolds of dimension at most $5$ are known \cite{Taylor2012,KMT2012,LZ-5mfld,ACS24}. We can apply Theorem \ref{thm:chtp-stable} to compute stable cohomotopy groups of $(n-1)$-connected $(2n+2)$-manifolds for $n\geq 2$.

\begin{corollary}\label{cor:chtp:2n+2}
Let $M$ be a closed orientable smooth $(n-1)$-connected $(2n+2)$-manifold such that $H_n(M)$ is $2$-torsion-free when $n\geq 3$. 
\begin{enumerate}
 \item  For $n\geq 2$, there is an isomorphism
 	\[\pi^{2n+1}(M)\cong QH^{2n+2}(M,Sq^2)\cong \z{1-\varepsilon},\]
	where $\varepsilon=0$ if $M$ is spin, otherwise $\varepsilon=1$.
 \item\label{chtp2n+2=-2} $\pi^{2n}(M)\cong \z{}$ for $n\geq 3$,  and there is a short exact sequence of abelian groups for $n=2$:
	\[ 0\to QH^{6}(M,Sq^2)\to \pi^{4}(M)\xra{\imath}\ker(Sq^2_\Z)\to 0,\]
	which splits if $H_2(M)$ is $2$-torsion-free. 
 \item\label{chtp2n+2=-3} For $n\geq 3$, there is a \text{split} short exact sequence of groups 
 \[0\to G_{24}\to \pi^{2n-1}(M)\xra{}H^{2n-1}(M)\to 0,\]
 where $G_{24}\subseteq H^{2n+2}(M;\Z/24)$. The splitting for $n\geq 4$ is clear and the splitting for $n=3$ follows by Proposition \ref{prop:cohtp=5}.

\item\label{chtp2n+2=-4} For $n\geq 4$, there is an isomorphism 
\[\pi^{2n-2}(M)\cong \ker\big(H^{2n-2}(M;\Z)\xra{\PP_\Z}H^{2n+2}(M;\Z/3)\big).\]
For $n\geq 5$, there is an isomorphism 
\[\pi^{2n-3}(M)\cong H^{2n-3}(M)\cong H_5(M).\]

\end{enumerate}

\end{corollary}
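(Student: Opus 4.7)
The plan is to apply Theorem \ref{thm:chtp-stable} and Lemma \ref{lem:chtp=-1} directly, matching each cohomotopy degree to the appropriate statement and verifying the vanishing hypotheses by reading off $H^\ast(M)$ from Poincar\'e duality applied to the table (\ref{table:HM}). Throughout, the only facts I need from the manifold structure are: $H^{2n+2}(M;\Z)\cong\Z$, $H^1(M)=0$, and $H^i(M)=0$ for $i\notin\{0, n, n+1, n+2, 2n+2\}$.

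For part (1), I apply Lemma \ref{lem:chtp=-1} with codomain $S^{2n+1}$ and $\dim M=2n+2$. Since $H^{2n+1}(M)\cong H_1(M)=0$, the Hurewicz map is zero and the short exact sequence collapses to $\pi^{2n+1}(M)\cong QH^{2n+2}(M,Sq^2_\Z)$. Because $H^{2n+2}(M;\z{})\cong\z{}$, this cokernel is either $\z{}$ or $0$, and the Wu formula identifies the image of $Sq^2$ on the top class with pairing against $v_2=w_2$: it vanishes precisely when $M$ is spin, yielding $\z{1-\varepsilon}$. For part (2), I apply Theorem \ref{thm:chtp-stable}(1) with theorem-parameters $(n,m)=(2n,2)$; the hypothesis $H^{2n+1}(M;\z{})=H_1(M;\z{})=0$ holds. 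For $n\geq 3$, I further use $H^{2n}(M)=H_2(M)=0$, so $\ker(Sq^2_\Z)=0$ and the quotient $QH^{2n+2}(M,\Omega\overline{Sq^2})$ reduces to $H^{2n+2}(M;\z{})\cong\z{}$ (the domain $H^{2n+1}(M;\Z)$ vanishes), giving $\pi^{2n}(M)\cong\z{}$. For $n=2$ the SES of the theorem is exactly the one stated, and the splitting is exactly the $2$-torsion-free case already proved in that theorem.

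For part (3), I apply Theorem \ref{thm:chtp-stable}(2) case (i) with theorem-parameters $(2n-1,3)$; the requirements $H^{2n}(M;\z{})=H_2(M;\z{})=0$ and $H^{2n+1}(M;\z{})=H_1(M;\z{})=0$ hold for $n\geq 3$. The target of $\PP_\Z\colon H^{2n-1}(M)\to H^{2n+3}(M;\Z/3)$ vanishes for dimension reasons ($2n+3>\dim M$), so $\ker(\PP_\Z)=H^{2n-1}(M)$, giving the SES in the statement. For $n\geq 4$ the quotient $H^{2n-1}(M)=H_3(M)=0$, forcing $\pi^{2n-1}(M)\cong G_{24}$ trivially, while for $n=3$ the splitting will be supplied separately by Proposition \ref{prop:cohtp=5}.

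For part (4), I apply Theorem \ref{thm:chtp-stable}(2) cases (ii) and (iii). In case (ii), with theorem-parameters $(2n-2,4)$ for $n\geq 4$, the vanishings $H^{2n-1}(M;\z{})=H_3(M;\z{})=0$ and $H^{2n}(M;\z{})=H_2(M;\z{})=0$ hold. The subgroup $G_{24}\subseteq H^{2n+1}(M;\Z/24)=H_1(M;\Z/24)=0$ vanishes, leaving $\pi^{2n-2}(M)\cong\ker(\PP_\Z)$ as claimed. For $\pi^{2n-3}(M)$ with $n\geq 5$, case (iii) with theorem-parameters $(2n-3,5)$ applies; the conditions on $H^{2n-2}(M;\z{})=H_4(M;\z{})$ and $H^{2n-1}(M;\z{})=H_3(M;\z{})$ are both zero by the table, and $G_{24}\subseteq H^{2n}(M;\Z/24)=H_2(M;\Z/24)=0$ again vanishes. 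Since the target $H^{2n+1}(M;\Z/3)=H_1(M;\Z/3)=0$, one has $\ker(\PP_\Z)=H^{2n-3}(M)\cong H_5(M)$ by Poincar\'e duality. The entire argument is essentially bookkeeping; the only nontrivial input is the Wu-formula identification in part (1), which is standard.
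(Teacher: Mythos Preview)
Your arguments for parts (1), (3), (4) are correct and match the paper's approach: bookkeeping with Lemma \ref{lem:chtp=-1} and Theorem \ref{thm:chtp-stable}, plus the Wu formula. The gap is in part (2), and it is precisely the point the paper's proof is devoted to.

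Theorem \ref{thm:chtp-stable}(\ref{chtp=-2}) produces the kernel--cokernel sequence with left-hand term $QH^{2n+2}(M,\Omega\ol{Sq^2})$, \emph{not} $QH^{2n+2}(M,Sq^2)$. The domain of $(\Omega\ol{Sq^2})_\sharp$ is $[M,\Omega S^{2n}_1]$, and $\Omega S^{2n}_1$ has nontrivial homotopy $\Z$ in degree $2n-1$ and $\z{}$ in degree $2n$; it is not governed by $H^{2n+1}(M;\Z)$ as you wrote. For $n=3$ one has $H^{2n-1}(M)=H^5(M)\cong \Z^l\oplus T\neq 0$, so $[M,\Omega S^6_1]$ does not vanish, and there is no formal reason why the image of $(\Omega\ol{Sq^2})_\sharp$ could not be all of $H^8(M;\z{})$. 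Similarly for $n=2$: the sequence you obtain has $QH^6(M,\Omega\ol{Sq^2})$, and nothing you wrote identifies this with $QH^6(M,Sq^2)$.

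The paper's proof handles exactly this: using $\ol{Sq^2}j_1=Sq^2$ and the snake lemma one always has $\im(Sq^2)\subseteq \im(\Omega\ol{Sq^2})_\sharp$, and the question is whether equality holds in the spin case. For $n\geq 4$ this is easy (the $(2n-2)$-skeleton equals $\ol{M}$, so $\pi^{2n}(M)\cong \pi^{2n}(S^{2n+2})$). For $n=2,3$ it is \emph{not} formal: the paper invokes smoothness via \cite[Lemma 4.3]{LZ-5mfld}, which guarantees that the secondary operation detecting $\eta^2$ acts trivially, forcing the relevant attaching map to be null and giving $\pi^{2n}(M)\cong\z{}$ (spin case). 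Without smoothness this can fail; your ``bookkeeping'' cannot see it.
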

 \begin{proof}
	By Theorem \ref{thm:chtp-stable} it suffices to prove the
 isomorphism 
 \begin{equation}\label{eq:olSq2}
	QH^{2n+2}(M,\Omega\ol{Sq^2})\cong QH^{2n+2}(M,Sq^2).
 \end{equation}

	The formula $\ol{Sq^2}j_1=Sq^2$ and the snake lemma imply that there is a short exact sequence
\[0\to \frac{\im(\Omega\ol{Sq^2})_\sharp}{\im(Sq^2)} \to QH^{2n+2}(M;Sq^2)\to QH^{2n+2}(M,\Omega\ol{Sq^2})\to 0.\]
	It follows that if $Sq^2$ acts nontrivially on $H^{2n}(M;\z{})$, then \[\im(\Omega\ol{Sq^2})_\sharp=\im(Sq^2)\cong\z{},\quad QH^{2n+2}(M,\Omega\ol{Sq^2})=0;\]
	if $Sq^2$ acts trivially on $H^{2n}(M;\z{})$, then either (i)  or (ii) holds:
	\begin{align*}
		(i)~&~\im(\Omega\ol{Sq^2})_\sharp=0,~~ QH^{2n+2}(M,\Omega\ol{Sq^2})=\z{}; \\
		(ii)~&~\im(\Omega\ol{Sq^2})_\sharp\cong\z{},~~ QH^{2n+2}(M,\Omega\ol{Sq^2})=0.
	\end{align*}

	We check the isomorphism (\ref{eq:olSq2}) as follows.
	When $n\geq 4$, the $(2n-2)$-skeleton of $M$ is homotopy equivalent to the complement $\ol{M}$ of the top cell of $M$, hence we have 
	\[\pi^{2n}(M)\cong \pi^{2n}( M/\ol{M})\cong \pi^{2n+1}(S^{2n+3})\cong\z{},\]
	which implies the case $(i)$.
	
	When $n=3$, let $M_{(4)}$ be the $4$-skeleton of $M$. By (\ref{table:HM}), there is a homotopy cofibre sequence 
	\[S^7\xra{f}\bigvee_{i=1}^lS^5\to M/M_{(4)}\to S^8.\]
	Then it is easy to compute that 
	\[\pi^6(M)\cong \pi^6(M/M_{(4)})\cong \z{1-\epsilon},\]
	where $\epsilon=0$ if the secondary operation that detects $\eta^2$ acts trivially on $H^5(M;\z{})$, otherwise $\epsilon=1$. For smooth $2$-connected $8$-manifolds, we always have $\epsilon=0$ \cite[Lemma 4.3]{LZ-5mfld}, hence the case $(i)$ holds.
	 
	 When $n=2$, there is a homotopy cofibre sequence by (\ref{table:HM}) 
	 \[S^6\xra{g}\bigvee_{i=1}^kS^4\vee \bigvee_{i=1}^lS^5\vee P^5(T)\to \Sigma M/\Sigma M_{(2)}\to S^7,\]
	 where $M_{(2)}$ is the $2$-skeleton of $M$ and $g$ is a map contains no Whitehead products. By Lemma \ref{lem:htpgrps}, we can set 
	 \[g=\sum_{i=1}^kx_i\cdot \eta^2+\sum_{i=1}^ly_i\cdot \eta+\sum_{i=1}^t(z_i^1\cdot \tilde{\eta}_{r_i}+z_i^2\cdot i_4\eta^2),\]
	 where all coefficients belong to $\{0,1\}$.
	 When $M$ is spin, the Steenrod square $Sq^2$ acts trivially on $H^5(\Sigma M;\z{})$, implying that $y_i=z_i^1=0$ for all $i$; by \cite[Lemma 4.3]{LZ-5mfld}, the secondary operation that detects $\eta^2$  acts trivially on $H^4(\Sigma M;\z{})$, implying that $x_i=z_i^2=0$ for all $i$.  Hence the spin and smooth condition implies that $g$ is null homotopic, yielding a homotopy equivalence
	 \[\Sigma M/\Sigma M_{(2)}\simeq \bigvee_{i=1}^lS^5\vee \bigvee_{i=1}^kS^4\vee P^5(T)\vee S^6.\]
Thus we have  $\pi^4(M)\cong \pi^5(\Sigma M/\Sigma M_{(2)})\cong H^4(M)\oplus \z{},$
	 which implies the case $(i)$.
 \end{proof}

\section{Unstable cohomotopy of 6-, 8-, 10-manifolds}\label{sec:chtp=2n+2}

In this section we shall study unstable cohomotopy sets of $(n-1)$-connected $(2n+2)$-manifolds $M$ for $n=2,3,4$ in terms of the homotopy type of the reduced suspension space $\Sigma M$ given by Propositions \ref{prop:SM-hbar}, \ref{prop:SM-8mfd-phi} and \ref{prop:SM-10mfd}, whose proofs are deferred to Section \ref{sec:SM-proofs}.

\begin{proposition}\label{prop:SM-hbar}
	Let $M$ be a simply connected $6$-manifold with $H_\ast(M)$ given by (\ref{table:HM}), $n=2$. If $T$ is  $2$-torsion-free, then there exist integers $0\leq c\leq l$ and $0\leq r_{j_0}<\infty$ that depend on $M$ such that 
	\begin{equation*}
		\Sigma M\simeq \bigvee_{i=2}^{l-c}(S^3\vee S^5)\vee \bigvee_{i=1}^kS^4\vee P^4\big(\frac{T}{\Z/3^{r_{j_0}}}\big)\vee P^5(T)\vee \bigvee_{i=2}^{c}C^5_\eta\vee C_\hbar, 
	\end{equation*}
	where $c=0$ if and only if $Sq^2$ acts trivially on $H^2(M;\z{})$, $r_{j_0}=0$ if $T$ contains no $3$-torsion, and $\hbar\colon S^6\to S^3\vee S^5\vee P^4(3^{r_{j_0}})\vee C^5_\eta$ is some map given by the expression
	\begin{equation}\label{eq:hbar}
		\hbar=x\cdot \nu'+y\cdot \eta_5+z_{j_0}\cdot i_3\alpha+w\cdot i_3^\eta\nu',
	\end{equation}
	where $x\in\{0,\pm 1,\pm 2,\pm 3,\pm 4,\pm 5,6\}$,  $z_{j_0}\in\{0,1\}$,  $w\in \{0,\pm 1,\pm 2,3\}$, and $y=0$ if $M$ is spin, otherwise $y=1$.
	
\end{proposition}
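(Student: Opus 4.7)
The plan is to write $\Sigma M$ as the mapping cone $C_{\hbar'}$ of the top-cell attaching map $\hbar' \colon S^6 \to \Sigma M_{(5)}$, split the $5$-skeleton $\Sigma M_{(5)}$ into elementary pieces via Chang's classification, and then reduce $\hbar'$ to the normal form (\ref{eq:hbar}) by absorbing redundant components into self-homotopy equivalences of the wedge. Since $M$ is simply connected of dimension $6$, $\Sigma M_{(5)}$ is a $2$-connected CW complex of dimension $\leq 5$ with $2$-torsion-free homology, so Lemma \ref{lem:Chang} (with $n=3$) yields a decomposition as a wedge of $S^3$'s, $S^4$'s, $S^5$'s, odd-primary Moore spaces $P^4(p^r)$, $P^5(p^r)$, and Chang complexes $C^5_\eta$. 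Matching ranks and torsion against the homology table (\ref{table:HM}) forces $l-c$ copies of $S^3$, $l-c$ copies of $S^5$, $k$ copies of $S^4$, a summand $P^4(T)$, a summand $P^5(T)$, and $c$ copies of $C^5_\eta$. Since $C^5_\eta$ is the only elementary summand carrying a nontrivial $Sq^2$ in this dimension range, the integer $c$ is controlled by $Sq^2$ on $H^2(M;\z{})$, and $c=0$ precisely when this action is trivial.

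Next I would analyze $\hbar' \in \pi_6(\Sigma M_{(5)})$ via the Hilton-Milnor theorem. The relevant component groups, drawn from Lemmas \ref{lem:Toda}, \ref{lem:htps:Moore}, and \ref{lem:htpgrps}, are $\Z/12\lra{\nu'}$ on each $S^3$, $\z{}\lra{\eta_5}$ on each $S^5$, $\Z/3\lra{i_3\alpha}$ on each $P^4(3^r)$, and $\Z/6\lra{i_3^\eta\nu'}$ on each $C^5_\eta$; meanwhile $\pi_6(S^4)\cong\z{}$ is generated by $\Sigma\nu'$, which as a suspension from $\pi_6(S^3)$ can be rerouted into an $S^3$-summand, while $\pi_6(P^5(p^r))=0$ and $\pi_6(P^4(p^r))=0$ for $p\geq 5$. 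Low-dimensional Whitehead product contributions in dimension $6$ can be absorbed into suitable self-equivalences of the wedge. Permuting like factors and shearing nonzero components between them then allows the consolidation of each $\nu'$-, $\eta_5$-, $i_3\alpha$-, and $i_3^\eta\nu'$-component into a single designated copy of the respective summand; the remaining copies, being disjoint from the image of $\hbar$, split off as wedge factors of $C_{\hbar'}$. The integer $r_{j_0}$ emerges as the order of the distinguished $3$-primary cyclic summand of $T$ that $\hbar$ hits, with $r_{j_0}=0$ precisely when $T$ contains no $3$-torsion.

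The main obstacle is justifying the consolidation step and pinning down the constraints on the coefficients in (\ref{eq:hbar}). The coefficient $y$ is detected by $Sq^2$ pairing the top cell with the distinguished $S^5$ in the mod-$2$ cohomology of $\Sigma M$; via Poincar\'{e} duality on $M$ this amounts to $w_2(M)\neq 0$, forcing $y=0$ when $M$ is spin and $y=1$ otherwise. The coefficient $z_{j_0}\in\{0,1\}$ records whether the $i_3\alpha$-component is nonzero, after using automorphisms of $P^4(3^{r_{j_0}})$ to absorb any nontrivial scalar. The ranges $x\in\{0,\pm 1,\dots,\pm 5,6\}$ and $w\in\{0,\pm 1,\pm 2,3\}$ are symmetric fundamental domains for $\Z/12$ and $\Z/6$, normalized using the $\pm 1$-degree self-maps of the $S^3$ and $C^5_\eta$ factors. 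Combining these reductions yields the decomposition and attaching-map formula in the statement.
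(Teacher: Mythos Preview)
Your overall strategy is right, but there is a real gap in how you handle the $k$ copies of $S^4$ and the Whitehead products. First, $\pi_6(S^4)\cong\z{}$ is generated by $\eta^2=\eta_4\eta_5$, not by $\Sigma\nu'$ (which lies in $\pi_7(S^4)$), and this component cannot be ``rerouted into an $S^3$-summand'': the only off-diagonal entry in a self-equivalence of $S^3\vee S^4$ is a map $S^4\to S^3$, and composing with $\eta^2$ adds $\eta^3=6\nu'$ to the $S^3$-component while leaving the $S^4$-component unchanged. Second, the Whitehead product piece $\pi_6(S^3\wedge S^3)=\pi_6(S^6)\cong\Z$ in $\pi_6(S^3\vee S^4)$ is nontrivial, so ``absorbed into suitable self-equivalences of the wedge'' is not a proof. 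The paper avoids both problems by first invoking Wall's connected sum decomposition $M\approx M'\#\big(\#^{k/2}(S^3\times S^3)\big)$, which via (\ref{eq:SM-SM'}) gives $\Sigma M\simeq \Sigma M'\vee\bigvee_{i=1}^k S^4$; this splits off the $S^4$'s \emph{before} any attaching-map analysis, so neither $\eta^2$-components nor $[S^3,S^4]$-type Whitehead products ever appear.

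With the $S^4$'s removed, the paper works with $\Sigma M'$: it cites \cite{CS22} to split off $P^4(T_{\geq 5})\vee P^5(T)$ and to know that the projection of $h'$ onto $\bigvee S^3\vee\bigvee S^5\vee\bigvee P^4(3^{r_j})$ contains no Whitehead products, and then checks explicitly that the remaining Whitehead-product contributions involving $C^5_\eta$ vanish by computing $\pi_6(\Sigma S^2\wedge C^4_\eta)=\pi_6(\Sigma S^4\wedge C^4_\eta)=\pi_6(\Sigma P^3(p^r)\wedge C^4_\eta)=0$. Only after this does the matrix-row-operation argument you describe become legitimate. Your reduction of the coefficients $y$ and $z_{j_0}$ is fine; note however that the stated ranges for $x$ and $w$ are already all of $\Z/12$ and $\Z/6$, so no $\pm1$-degree normalization is applied there.
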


\begin{proposition}\label{prop:SM-8mfd-phi}
	Let $M$ be a closed orientable smooth $2$-connected $8$-manifold with $H_\ast(M)$ given by (\ref{table:HM}), $n=3$. If $T$ is $2$-torsion-free, then there is a homotopy equivalence 
	\begin{multline*}
		\Sigma M\simeq \bigvee_{i=2}^k S^5\vee \bigvee_{i=2}^{l-c}S^{4} \vee \bigvee_{i=1}^{l-c} S^6\vee \bigvee_{i=2}^c C_{\eta}^{6} \vee P^{5}(\frac{T}{\Z/3^{r_{j_0}}}) \vee P^{6}(\frac{T}{\Z/3^{r_{j_1}}})\vee  C_{\phi},
	\end{multline*}
	where $0\leq c\leq l$, $r_{j_0},r_{j_1}$ are non-negative integers that depend on $M$ and $\phi\colon S^8\to S^5\vee S^4\vee C^6_\eta\vee P^5(3^{r_{j_0}})\vee P^6(3^{r_{j_1}})$ is given by 
	\begin{equation}\label{eq:phi}
		\phi=x\cdot \nu_5+y\cdot \eta_4\nu_5+z\cdot i_4^\eta\nu_4\eta_7 +u_{j_0}\cdot \Sigma^2\tilde{\alpha}_{r_{j_0}}+w_{j_1}\cdot i_5\Sigma^2\alpha,
	\end{equation} 
	where $x\in\Z/24=\{0,\pm 1,\pm 2,\cdots,\pm 11,12\}$, $y,z\in\{0,1\}$ and $u_{j_0},w_{j_1}\in\{0,\pm 1\}$. 
	
\end{proposition}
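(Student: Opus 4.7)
First I would strip off the top cell. Writing $\bar{M}=M\setminus\mathrm{int}(D^8)$ gives a cofibre sequence $S^7\xrightarrow{f}\bar{M}\hookrightarrow M$, and hence a homotopy equivalence $\Sigma M\simeq \Sigma\bar{M}\cup_{\Sigma f}e^9$. By Poincar\'e duality and the assumption that $T$ is $2$-torsion-free, $\bar{M}$ has homology concentrated in degrees $3,4,5$ with values $\Z^l\oplus T$, $\Z^k\oplus T$ and $\Z^l$, so $\Sigma\bar{M}$ is a $3$-connected, $6$-dimensional complex with $2$-torsion-free homology. Lemma \ref{lem:Chang} (Chang's theorem with $n=4$) then realizes $\Sigma\bar{M}$ as a wedge of copies of $S^4,S^5,S^6,C^6_\eta$ together with odd-prime Moore spaces $P^5(p^r)$ and $P^6(p^r)$.

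Next I would pin down this decomposition by counting homology dimension by dimension, which determines the multiplicities in terms of a single free parameter $c$ recording the number of $C^6_\eta$-summands. Since the attaching map inside each $C^6_\eta$ is $\eta$, which detects $Sq^2$, the integer $c$ equals the $\mathbb{F}_2$-rank of $Sq^2$ on $H^3(M;\Z/2)$; in particular $c=0$ iff $Sq^2$ acts trivially there. This yields
\[
\Sigma\bar{M}\simeq \bigvee_{i=1}^{l-c}S^4\vee \bigvee_{i=1}^{k}S^5\vee \bigvee_{i=1}^{l-c}S^6\vee \bigvee_{i=1}^{c}C^6_\eta\vee P^5(T)\vee P^6(T).
\]

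Next I would read off $\Sigma f\in\pi_8(\Sigma\bar{M})$ in these coordinates. By Lemmas \ref{lem:Toda}, \ref{lem:htps:Moore} and \ref{lem:htpgrps} the relevant $\pi_8$-contributions of each summand type are $\Z/24\lra{\nu_5}$, $\Z/2\lra{\nu_4\eta_7}\oplus\Z/2\lra{\eta_4\nu_5}$, $\Z/2\lra{\eta^2}$, $\Z/2\lra{i_4^\eta\nu_4\eta_7}$, $\Z/3\lra{\Sigma^2\tilde{\alpha}_r}$ and $\Z/3\lra{i_5\Sigma^2\alpha}$ respectively; all remaining contributions vanish under the $2$-torsion-free hypothesis. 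Because $\Sigma f$ is a suspension it lies in the image of $E\colon \pi_7(\bar{M})\to \pi_8(\Sigma\bar{M})$, and since $E$ kills Whitehead products there are no Hilton--Milnor cross-terms. Self-equivalences of the wedge (Gaussian elimination within each block of like summands, together with shear equivalences between the $S^4$-, $S^6$- and $C^6_\eta$-blocks built from the maps $i_4^\eta$ and $\tilde{\zeta}$ of Lemma \ref{lem:htpgrps} (\ref{htpgrps:Ceta:stable})) normalize $\Sigma f$ so that at most one summand of each retained type $S^5$, $S^4$, $C^6_\eta$, $P^5(3^{r_{j_0}})$ and $P^6(3^{r_{j_1}})$ carries a nonzero component, with coefficients of the announced form $x,y,z,u_{j_0},w_{j_1}$. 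These five retained summands together with the $9$-cell assemble into $C_\phi$, while the remaining summands split off, and a further reduction by the units in $\Z/24$, $\Z/2$ and $\Z/3$ restricts the coefficients to the stated ranges.

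The main obstacle is to execute the normalization coherently, i.e.\ to ensure every $S^6$-summand ends up free of a nonzero component so that it splits off, and that the $\nu_4\eta_7$-entries in the $S^4$-block can be fully transferred into the single retained $C^6_\eta$. This requires analyzing the wedge automorphism group acting on the $(\Z/2)$-vector spaces of components, exploiting the identification $i_4^\eta\nu_4\eta_7=\Sigma\xi$, the vanishing of $i_4^\eta\eta_4$ from the cofibre sequence defining $C^6_\eta$, and computations of $\tilde{\zeta}\eta^2$ via the formula $\tilde{\zeta}\eta=3\,i_4^\eta\Sigma\nu'$. The $3$-primary exponents $r_{j_0}$ and $r_{j_1}$ record which cyclic factors of $T$ support the nontrivial $\tilde{\alpha}_r$- and $\alpha$-components of $\Sigma f$; they are determined by higher cohomology operations on $M$ in the spirit of \cite{LZ-5mfld, lipc-4mfld, HL-7mfd}, but only their existence is needed for the statement.
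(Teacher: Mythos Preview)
Your outline is close to the paper's argument, but there is a genuine gap at exactly the point you flag as ``the main obstacle'': the $\eta^2$-components on the $S^6$-summands (equivalently, on the $S^5$-summands of $\bar M$ before suspending) \emph{cannot} be removed by self-equivalences of the wedge. The only maps into $S^6$ from the other summands are multiples of $q_6^\eta\colon C^6_\eta\to S^6$ (and the degree maps between the $S^6$'s themselves), and $q_6^\eta\circ(i_4^\eta\nu_4\eta_7)=0$; the shears you propose via $i_4^\eta$ and $\tilde\zeta$ go \emph{into} $C^6_\eta$ and therefore alter the $C^6_\eta$-row, not the $S^6$-row. So Gaussian elimination reduces you to at most one surviving $\eta^2$-component, not to none---yet the target of $\phi$ contains no $S^6$, so the statement requires all of them to vanish. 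The paper closes this gap with a geometric input you never invoke: since $M$ is \emph{smooth} (and automatically spin, being $2$-connected), \cite[Lemma~4.3]{LZ-5mfld} shows that the secondary operation detecting $\eta^2$ acts trivially on $H^\ast(M;\Z/2)$, which forces all the $\eta^2$-components to be zero. This is precisely why ``smooth'' appears in the hypotheses.

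A smaller point: rather than applying Chang's theorem to $\Sigma\bar M$ with $n=4$, the paper applies it to $\bar M$ itself with $n=3$ (this is Lemma~\ref{lem:olM}), which is legitimate since $\bar M$ is already a $2$-connected $5$-complex. The payoff is that every component of $\Sigma\phi'$ is then literally a suspension, so on each $S^4=\Sigma S^3$ the component lies in the image of $E\colon\pi_7(S^3)\to\pi_8(S^4)$, which is $\Z/2\langle\eta_4\nu_5\rangle$; the $\nu_4\eta_7$-entries you worry about simply never arise. Your route through $\Sigma\bar M$ is not wrong, but it creates extra work that the direct decomposition of $\bar M$ avoids.
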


\begin{proposition}\label{prop:SM-10mfd}
	Let $M$ be a closed orientable $3$-connected $10$-manifold with $H_\ast(M)$ given by (\ref{table:HM}), $n=4$. If $T$ is $2$-torsion-free, then there is a homotopy equivalence 
	\begin{multline*}
		\Sigma M\simeq \bigvee_{i=1}^k S^6\vee \bigvee_{i=2}^{l-c}(S^{5}\vee S^7) \vee \bigvee_{i=3}^c C^7_{\eta} \vee P^{6}(T)\vee P^{7}(\frac{T}{\Z/3^{r_{j_0}}}) \vee C_{\varphi},
	\end{multline*}
	where  $0\leq c\leq l$ and $r_{j_0}$ are non-negative integers that depend on $M$, and $\varphi\colon S^{10}\to S^7\vee S^5\vee C^7_\eta\vee C^7_\eta\vee P^7(3^{r_{j_0}})$ is given by 
	\begin{equation}\label{eq:varphi}
		\varphi=x\cdot\nu_7+y\cdot \nu_5\eta^2+(z\cdot \Sigma\tilde{\nu}_6+\epsilon\cdot i_5^\eta\nu_5\eta^2)+\delta\cdot i_5^\eta\nu_5\eta^2+w_{j_0}\cdot \Sigma^4\tilde{\alpha}_{r_{j_0}},
	\end{equation} 
	where $x,z\in\Z/24$, $w_{j_0}\in\Z/3=\{0,\pm 1\}$ and  
	\[(y,\epsilon,\delta)\in \{(0,0,0),(1,0,0),(0,1,0),(0,0,1)\}.\] 
\end{proposition}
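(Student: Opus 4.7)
The strategy is to adapt the framework used for the 6- and 8-manifold cases (Propositions \ref{prop:SM-hbar} and \ref{prop:SM-8mfd-phi}): first decompose the skeleton of $\Sigma M$ below the top cell via Chang's classification, then analyze the attaching map of the top $11$-cell. By Poincar\'e duality applied to (\ref{table:HM}) with $n=4$, the suspension $\Sigma M$ is a $4$-connected CW complex of dimension $11$ with reduced homology concentrated in dimensions $5,6,7,11$. Writing $\Sigma M=M_0\cup_{\varphi}e^{11}$ with $M_0=(\Sigma M)_{(7)}$, the subcomplex $M_0$ is $4$-connected of dimension $7$ with $2$-torsion-free homology, so Lemma \ref{lem:Chang} (applied with $n=5$) yields
\[
M_0 \simeq \bigvee_{i=1}^{k}S^{6}\vee \bigvee_{i=1}^{l-c}(S^{5}\vee S^{7})\vee \bigvee_{i=1}^{c}C^{7}_{\eta}\vee P^{6}(T)\vee P^{7}(T),
\]
where $c$ equals the $\Z/2$-rank of $Sq^{2}\colon H^{5}(M_0;\z{})\to H^{7}(M_0;\z{})$; by desuspending one checks this agrees with the rank of $Sq^{2}$ on $H^{4}(M;\z{})$, so $c=0$ exactly when $Sq^{2}$ vanishes there.

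Next I would analyze $\pi_{10}(M_0)$ via the Hilton--Milnor decomposition. Since $\Sigma M$ is a reduced suspension, its top-cell attaching map $\varphi$ is itself a suspension (explicitly, $\varphi=\Sigma\alpha$ for $\alpha\colon S^{9}\to (M)_{(9)}$), so all Whitehead product contributions vanish on $\varphi$, and $\varphi$ decomposes as a direct sum of components $\varphi_{Z}\in\pi_{10}(Z)$ indexed over the wedge summands $Z$ of $M_0$. Using Lemmas \ref{lem:Toda}, \ref{lem:htps:Moore} and \ref{lem:htpgrps} one computes $\pi_{10}(S^{6})=0$, $\pi_{10}(S^{5})=\z{}\lra{\nu_5\eta^2}$, $\pi_{10}(S^{7})=\Z/24\lra{\nu_7}$, $\pi_{10}(C^{7}_{\eta})=\Z/24\lra{\tilde{\nu}_7}\oplus \z{}\lra{i_5^{\eta}\nu_5\eta^2}$ with $\tilde{\nu}_7=\Sigma\tilde{\nu}_6$, $\pi_{10}(P^{7}(3^{r}))=\Z/3\lra{\Sigma^{4}\tilde{\alpha}_{r}}$, and $\pi_{10}(P^{6}(p^{r}))=\pi_{10}(P^{7}(p^{r}))=0$ for $p\geq 5$ (for $p=3$, $\pi_{10}(P^{6}(3^{r}))=0$ follows from the cofiber sequence and the fact that multiplication by $3^{r}$ is an isomorphism on $\pi_{10}(S^{5})=\z{}$).

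Then I would consolidate the nontrivial components of $\varphi$ into a minimal target wedge. By rechoosing the wedge decomposition of $M_0$, the $\nu_7$-component can be concentrated in a single $S^{7}$-summand so that $l-c-1$ copies of $S^{7}$ split off; similarly a single $S^{5}$-summand carries the $\nu_5\eta^2$-contribution coming from the $S^{5}$'s; two copies of $C^{7}_{\eta}$ must remain in the target of $\varphi$ to accommodate the two independent generators of $\pi_{10}(C^{7}_{\eta})$, leaving $c-2$ copies free (with adjustments needed in the boundary cases $c=0,1,2$); and exactly one cyclic factor $\Z/3^{r_{j_0}}$ of the $3$-primary part of $T$ carries a nonzero $\Sigma^{4}\tilde{\alpha}_{r_{j_0}}$-component, so its Moore space $P^{7}(3^{r_{j_0}})$ is absorbed into the target of $\varphi$ while $P^{6}(T)\vee P^{7}(T/\Z/3^{r_{j_0}})$ splits off. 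This produces the stated decomposition and the codomain of $\varphi$.

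The main obstacle will be the asymmetric treatment of the two $C^{7}_{\eta}$-summands in the target of $\varphi$ together with the constraint $(y,\epsilon,\delta)\in\{(0,0,0),(1,0,0),(0,1,0),(0,0,1)\}$: the class $\nu_5\eta^2$ appears simultaneously in the $S^{5}$ summand and in each $C^{7}_{\eta}$ summand via $i_5^{\eta}$, and the Toda-style identity $\tilde{\zeta}\cdot \nu_7=-2\tilde{\nu}_7+\epsilon\cdot i_5^{\eta}\nu_5\eta^2$ of Lemma \ref{lem:htpgrps}(\ref{htpgrps:Ceta-unst3}) links the $\tilde{\nu}_7$-part of the first $C^{7}_{\eta}$ to an indeterminate $\epsilon\in\{0,1\}$. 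A careful accounting of how changes of wedge splitting shift $\nu_5\eta^2$-contributions between the $S^{5}$ and the two $C^{7}_{\eta}$ summands while preserving the other components is required to reduce the three potential positions to at most one. A secondary difficulty is pinning down the integer $r_{j_0}$ via the appropriate $3$-primary higher cohomology operation on $H^{5}(M;\Z/3)$, following the pattern of \cite{LZ-5mfld,lipc-4mfld,HL-7mfd} referenced after the statement of Theorem \ref{thm:SM}.
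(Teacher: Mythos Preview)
Your approach is essentially the same as the paper's: decompose the $7$-skeleton of $\Sigma M$ via Chang's classification (the paper phrases this as Lemma \ref{lem:olM} applied to $\ol{M}$ before suspending, which is equivalent), compute the relevant $\pi_{10}$-groups, observe that the attaching map is a suspension so that Whitehead products vanish and matrix row operations apply, and then reduce coefficients. Two small points deserve tightening. First, your justification ``two copies of $C^{7}_{\eta}$ must remain \dots\ to accommodate the two independent generators of $\pi_{10}(C^{7}_{\eta})$'' is not quite the reason: a single $C^{7}_{\eta}$ already carries both generators. The genuine reason two copies are needed is that after concentrating the $\tilde{\nu}_{7}$-component in the first $C^{7}_{\eta}$ via $GL_{c}(\Z)$-moves, the residual $i_{5}^{\eta}\nu_{5}\eta^{2}$-component may sit in a \emph{different} $C^{7}_{\eta}$, and one cannot move it back without disturbing the $\tilde{\nu}_{7}$-reduction (only upper-triangular moves fixing the first row are available). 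Second, the $(y,\epsilon,\delta)$-constraint you flag as the main obstacle is resolved exactly by the mechanism you name: the elementary self-equivalence $\left(\begin{smallmatrix}\id_{S^{5}}&0\\ -i_{5}^{\eta}&\id_{\eta}\end{smallmatrix}\right)$ of $S^{5}\vee C^{7}_{\eta}$ kills any $i_{5}^{\eta}\nu_{5}\eta^{2}$-component whenever $y=1$, and among the $C^{7}_{\eta}$'s with vanishing $\tilde{\nu}_{7}$-part one reduces over $\Z/2$ by Gaussian elimination; this yields precisely the four listed possibilities. (Also, your argument for $\pi_{10}(P^{6}(3^{r}))=0$ via a cofibre ``exact sequence'' of homotopy groups is not literally valid; the clean argument is that $P^{6}(3^{r})$ is $2$-locally contractible while $\pi_{9}(S^{5})_{(3)}=\pi_{10}(S^{5})_{(3)}=\pi_{10}(S^{6})_{(3)}=0$.)
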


\subsection{Cohomotopy of simply connected $6$-manifolds}\label{sec:chtp:n=2}
Let $M$ be a simply connected $6$-manifold with $H_\ast(M)$ given by (\ref{table:HM}), $n=2$. 
Let $ BX$ be the classifying space of a homotopy associative $H$-space. It is well-known that $ BS^1=K(\Z,2)=\C P^{\infty}$ and $ BS^3=\HP{\infty}$. Since $\pi^1(M)=H^1(M)=0$, the complex Hopf fibration 
$S^1\to S^3\xra{\eta}S^2$ induces a short exact sequence of sets
\[0\to \pi^3(M)\xra{}\pi^2(M)\xra{h^2}H^2(M)\xra{\omega_\sharp}[M,B S^3],\] 
where $\omega\colon BS^1\to BS^3$ is the canonical map of $\C P^{\infty}\to \HP{\infty}$.
The natural action of $\pi^3(M)$ on $\pi^2(M)$ is transitive, and by \cite[Theorem 3]{KMT2012}, $\pi^1(M)=0$ implying that the action is also free. Consequently, $\pi^2(M)$ is a left $\pi^3(M)$-torsor, and hence there is a bijection \[\pi^3(M)\xra{1:1}\pi^2(M).\]

To understand the cohomotopy Hurewicz map $h^2\colon \pi^2(M)\to H^2(M)$, we shall employ appropriate cohomology operations to provide an equivalent  description of the condition $\omega_\sharp(u)=0$ for a class $u\in H^2(M)$. The analysis method of the following Proposition \ref{prop:chtp=2} is due to \cite[Section 5]{Taylor2012} or \cite[Proposition 6.10]{ACS24}.

\begin{proposition}\label{prop:chtp=2}
   Let $M$ be a simply connected $6$-manifold with $H_\ast(M)$ given by (\ref{table:HM}), $n=2$.
\begin{enumerate}
   \item\label{chtp=2:nonspin} If $M$ is nonspin,  there is an exact sequence of sets 
	\[0\to\pi^3(M)\xra{}\pi^2(M)\xra{h^2}H^2(M)\xra{\smallsmile^2}H^4(M),\]
	where $\smallsmile^2$ is the cup square operation sending $u$ to $u^2$.
   \item\label{chtp=2:spin} If $M$ is spin, then there is an exact sequence of sets 
   \[0\to\pi^3(M)\xra{}\pi^2(M)\xra{h^2}H^2(M)\xra{(\smallsmile^2,\Theta_0)}H^4(M)\oplus \z{},\]
   where $\Theta_0\colon \{u\in H^2(M;\z{})\mid u^2=0\} \to H^6(M;\z{})$
   is the restriction of the secondary cohomology operation $\Theta$ constructed by the diagram (\ref{diag:Theta}).
\end{enumerate}

\begin{proof}
	 Let $(BS^3)_n$ be the $n$-th Postnikov section of $ BS^3$.
Since $\iota\colon S^4\to  BS^3$ is $7$-connected, $ BS^3$ has the same Postnikov tower as $S^4$ up to stage $6$, see Lemma \ref{lem:tower:Sn} with $n=4$. Note that we have a bijection $[M, BS^3]\cong [M,S^4_2]$. 

(1) When $M$ is nonspin, by Lemma \ref{lem:tower:Sn} and Corollary \ref{cor:chtp:2n+2}, there is a sequence of isomorphisms 
\[\pi^4(M)\xra[\cong]{(\imath_2)_\sharp}[M,S^4_2]\xra[\cong]{(p_2)_\sharp} [M,S^4_1]\xra[\cong]{(p_1)_\sharp}\ker(Sq^2_\Z).\] 
It follows that $0=\omega_\sharp(u)\in\pi^4(M)$ if and only if 
\[0=(p_1)_\sharp(p_2)_\sharp((\imath_2)_\sharp)\omega_\sharp(u)=i_\sharp\omega_\sharp(u)=u^2,\]
which completes the proof of (\ref{chtp=2:nonspin}).

(2) When $M$ is spin, by Corollary \ref{cor:chtp:2n+2} with $n=2$,  there is a short exact sequence
\begin{equation}\label{SES:MS4}
	0\to \z{}\to \pi^4(M)\xra{h^4}H^4(M)\to 0,
\end{equation}
where $h^4$ is the fourth cohomotopy Hurewicz homomorphism and satisfies the formula $h^4\circ w_\sharp(u)=u^2$. 

To analyze the condition $\omega_\sharp(u)=0$, we don't need the full structure of $\pi^4(M)$, hence we replace it with $H^4(M)\oplus \z{}$. 
Consider the following homotopy commutative diagram associated to the Postnikov tower of $B S^3$ or $S^4$:
\begin{equation}\label{diag:E5E6}
   \begin{tikzcd}
	   &	K(\Z/2,6)\ar[r,"j_2"]&S^4_2\ar[d,"p_2"]&\\
	   &K(\Z/2,5)\ar[r,"j_1", near start]&S^4_1\ar[r,"\ol{Sq^2}"]\ar[d,"p_1"]&K(\z{},7)\\
	   M\ar[uur,"u''"]\ar[ur,"u'\simeq \ast"description]\ar[r,"u"]&\C P^{\infty}\ar[uur,"g_2"description, crossing over, near end]\ar[r,"\smallsmile^2"]\ar[ur,"g_1"description]&K(\Z,4)\ar[r,"Sq^2_\Z"]&K(\z{},6)	
   \end{tikzcd},
\end{equation}
where $\smallsmile^2$ is the cup square operation. Let $u\in H^2(M)$ be a class satisfying $u^2=0$, or equivalently $\smallsmile^2\circ u\simeq\ast$.  Since $Sq^2_\Z\circ \smallsmile^2\simeq\ast$, there is a lift $g_1$ of $\smallsmile^2$. $H^7(\C P^{\infty};\z{})=0$ implying a lift $g_2$ of $g_1$ such that $p_2g_2=g_1$. Then $p_1g_1u=u^2=0$ implying that there exists a lift $u'\colon M\to K(\z{},5)$ such that $j_1u'\simeq g_1u$. Note that $M$ is simply connected implying that $u'$ is null homotopic  and $(p_1)_\sharp\colon [M,S^4_1]\xra{}H^4(M)$ is injective; consequently, $(p_1)_\sharp(g_1u)=u^2=0$ implying $g_1u$ is null homotopic. Then
$p_2g_2u=g_1u\simeq \ast$ implying a lift $u''\colon M\to K(\Z/2,6)$ such that 
\begin{equation}\label{eq:u''}
	g_2u\simeq j_2 u''.
\end{equation}

Consider the following homotopy commutative diagram induced by the null homotopy $\ol{Sq^2}g_1\simeq\ast$:
\begin{equation}\label{diag:Theta}
   \begin{tikzcd}
	   &&M\ar[d,"g_1u\simeq\ast"]\ar[dl,"u"swap]\ar[dll,"\tilde{u}"swap]\\
	   F\ar[r]\ar[d,"\widetilde{Sq^2}"]&K(\Z,2)\ar[r,"g_1"]\ar[d,"g_2"]&S^4_1\ar[d,equal]\\
	   K(\z{},6)\ar[r,"j_2"]&S^4_2\ar[r,"p_6"]&S^4_1\ar[r,"\ol{Sq^2}"]&K(\z{},7)
   \end{tikzcd},
\end{equation}
where $F$ is the homotopy fibre of $g_1$,  $\tilde{u}$ and $\widetilde{Sq^2}$ are the induced lifting maps. By \cite{Harperbook}, there is a secondary cohomology operation $\Theta$ (based on the relation $\ol{Sq^2}g_1\simeq\ast$) defined by 
\[\Theta\colon \ker((g_1)_\sharp)\to H^6(M;\z{}),\quad \Theta(u)=\widetilde{Sq^2}\circ \tilde{u},\]
where the indeterminacy vanishes because $\im(\Omega\ol{Sq^2})_\sharp=\im(Sq^2)=0$, by the short exact sequence (\ref{SES:MS4}) and the formula (\ref{eq:olSq2}). This fact, together with (\ref{eq:u''}), implies that 
\[u''=\widetilde{Sq^2}\circ \tilde{u}=\Theta(u).\] 
Let $\Theta_0$ be the restriction of $\Theta$ to the subgroup $\{u\in H^2(M;\z{})\mid u^2=0\}$ of $\ker((g_1)_\sharp)$. Then we have $u''=\Theta_0(u)$ by comparing the diagrams (\ref{diag:E5E6}) and (\ref{diag:Theta}). 
\end{proof}
\end{proposition}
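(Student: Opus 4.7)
The plan is to analyze the obstruction $\omega_\sharp\colon H^2(M)=[M,BS^1]\to [M,BS^3]$ one Postnikov stage at a time. Since the inclusion $S^4\simeq \Sigma \HP{1}\hookrightarrow \HP{\infty}=BS^3$ is $7$-connected and $\dim M=6$, one has $[M,BS^3]\cong [M,S^4_2]$, with $S^4_2$ the second Postnikov section listed in Lemma \ref{lem:tower:Sn}. The Serre fibration $S^1\to S^3\to S^2$, passed through the classifying-space functor, produces the exact sequence $0\to\pi^3(M)\to\pi^2(M)\to H^2(M)\xra{\omega_\sharp}[M,S^4_2]$, so it suffices to describe $\ker(\omega_\sharp)$. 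The bottom $k$-invariant of the tower identifies $h^4\circ\omega_\sharp(u)$ with the cup-square $u^2\in H^4(M)$.

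For part (1), when $M$ is nonspin, Corollary \ref{cor:chtp:2n+2}(\ref{chtp2n+2=-2}) gives $\pi^4(M)\cong\ker(Sq^2_\Z)$ with $QH^6(M,Sq^2)=0$, so the composition $[M,S^4_2]\to [M,S^4_1]\to H^4(M)$ is injective with image $\ker(Sq^2_\Z)$. Chasing $\omega_\sharp(u)$ down the tower then shows $\omega_\sharp(u)=0$ iff $u^2=0$, which is exactly the stated exact sequence.

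For part (2), when $M$ is spin, the map $[M,S^4_2]\to H^4(M)$ acquires a $\z{}$ kernel and a secondary invariant is needed to detect it. I would first produce a lift $g_1\colon K(\Z,2)\to S^4_1$ of $\smallsmile^2$, using $H^7(\C P^\infty;\z{})=0$, and then a further lift $g_2\colon K(\Z,2)\to S^4_2$ using the corresponding obstruction vanishing on $\C P^\infty$; then $\omega_\sharp(u)$ is represented by $g_2\circ u$. For $u\in H^2(M)$ satisfying $u^2=0$, the composite $g_1\circ u$ is null-homotopic, so $g_2\circ u$ factors through the fibre inclusion $j_2\colon K(\z{},6)\to S^4_2$ as some class $u''\in H^6(M;\z{})$. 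The core step is to identify $u''$ with $\Theta_0(u)$, where $\Theta$ is the secondary cohomology operation attached to the null-homotopy $\ol{Sq^2}\circ g_1\simeq\ast$ via the standard universal-example construction; this identification is a direct diagram comparison using that $S^4_2\to S^4_1$ is a principal $K(\z{},6)$-fibration, so any two lifts differ by a fibre class.

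The main obstacle I anticipate is verifying that $\Theta_0$ has vanishing indeterminacy on the restricted domain $\{u\in H^2(M;\z{})\mid u^2=0\}$. The indeterminacy equals $\im (\Omega\ol{Sq^2})_\sharp$, which by the identification (\ref{eq:olSq2}) coincides with $\im(Sq^2\colon H^4(M;\z{})\to H^6(M;\z{}))$; this image is cup product with $w_2$ by the Wu formula, hence vanishes precisely when $M$ is spin. Everything else reduces to routine obstruction computations on $\C P^\infty$ and coherent choices of lifts up the tower.
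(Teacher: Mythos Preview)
Your proposal follows essentially the same route as the paper: analyze $\omega_\sharp$ via the Postnikov tower of $S^4$ (equivalently $BS^3$ through stage $6$), construct universal lifts $g_1,g_2$ of the cup-square map on $\C P^\infty$, and identify the residual $\z{}$-valued obstruction with a secondary operation $\Theta$ whose indeterminacy $\im(\Omega\ol{Sq^2})_\sharp=\im(Sq^2\colon H^4\to H^6)$ vanishes in the spin case by the Wu formula. One minor correction: you have the two obstruction groups swapped---the lift $g_1$ of $\smallsmile^2$ to $S^4_1$ requires $Sq^2_\Z\circ\smallsmile^2\simeq\ast$ (which holds by the Cartan formula since the universal class is integral, the relevant group $H^6(\C P^\infty;\z{})$ being nonzero), while it is the further lift $g_2$ to $S^4_2$ that uses $H^7(\C P^\infty;\z{})=0$.
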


The quaternionic multiplication on $S^3$ gives a homotopy associative $H$-structure on $S^3$, yielding the standard group structure on the cohomotopy set $\pi^3(X)$. Since $S^3=\Omega BS^3$, where $BS^3$ is the classfying space of $S^3$, the group structure on $\pi^3(M)$ can also be induced by the standard comultiplication of the suspension $\Sigma M$ under the bijection $\pi^3(M)=[\Sigma M,BS^3]$.  Applying the discussion of the homotopy decomposition of the reduced suspension space $\Sigma M$ in Section \ref{sec:SM-proofs}, we  characterize the group structure on $\pi^3(M)$ by the following proposition.

\begin{proposition}\label{prop:chtp=3}
	Let $M$ be a simply connected $6$-manifold with $H_\ast(M)$ given by (\ref{table:HM}), $n=2$. Let $c$ be given by Proposition \ref{prop:SM-hbar}. If $T$ is $2$-torsion-free, then there is a short exact sequence of groups 
	\[0\to G_{12}\oplus T\to \pi^3(M)\to \Z^k\oplus \bigoplus_{i=1}^{l-c-\varepsilon}\z{}\to 0,\]
	where $G_{12}$ is a subgroup of $\Z/12$, and $\varepsilon=0$ if $M$ is spin, otherwise $\varepsilon=1$. Moreover, the homomorphism $\pi^3(M)\to \bigoplus_{i=1}^{l-c-\varepsilon}\z{}$ admits a splitting section, and the action of $ \Z^k\oplus \bigoplus_{i=1}^{l-c-\varepsilon}\z{}$ on $T$ is trivial. 
	
	In particular, if $k=l=0$, then there is an isomorphism 
	\[\pi^3(M)\cong T\oplus \Z/12.\]
	
\begin{proof}
	Let $h\colon S^5\to \ol{M}$ be the attaching map of   the top cell of $M$, where $\ol{M}$ denotes $5$-skeleton of $M$. Then the homotopy cofibre sequence $$S^6\xra{\Sigma h}\Sigma \ol{M}\xra{\Sigma i}\Sigma M\xra{\Sigma q} S^7$$ induces a long exact sequence of groups 
	\[[\Sigma^2 \ol{M},BS^3]\xra{(\Sigma^2 h)^\sharp}\pi_7(BS^3)\xra{(\Sigma q)^\sharp}[\Sigma M,BS^3]\xra{(\Sigma i)^\sharp}[\Sigma \ol{M},BS^3]\xra{(\Sigma h)^\sharp}\pi_6(BS^3).\]
	The cokernel of $(\Sigma^2 h)^\sharp$ is isomorphic to some quotient group $G_{12}$ of $\pi_7(BS^3)\cong \Z/12$.
	By Proposition 4.1, there is a homotopy equivalence 
	\[\Sigma \ol{M}\simeq \bigvee_{i=1}^k S^4\vee \bigvee_{i=1}^{l-c}(S^3\vee S^{5})\vee \big(\bigvee_{i=1}^c C^{5}_\eta\big)\vee P^{4}(T)\vee P^{5}(T). \]
  Note that $[\Sigma \ol{M},BS^3]$ is stable under the suspension,  one can easily compute that there is an isomorphism 
	\begin{align*}
		[\Sigma \ol{M},BS^3]&\cong [\bigvee_{i=1}^kS^4,BS^3]\oplus [P^4(T),BS^3] \oplus [\bigvee_{i=1}^{l-c}S^5,BS^3]\\
		&\cong \Z^k\oplus T\oplus \bigoplus_{i=1}^{l-c}\z{}.
	\end{align*}
	Since $\pi_6(BS^3)\cong \Z/2$, the group $\Z^k\oplus T$ clearly belongs to the kernel of $(\Sigma h)^\sharp$. 
	The composition 
	\[S^6\xra{\Sigma h}\Sigma \ol{M}\xra{\mathrm{pinch}}\bigvee_{i=1}^{l-c}S^5\]
	is given by the sum $\sum_{i=1}^{l-c}\varepsilon_i\cdot \eta$, where $\varepsilon_i\in\{0,1\}$ and all $\varepsilon_i$ are $0$ if and only if $M$ is spin. 
It follows that there is a short exact sequence of groups 
\[0\to G_{12}\to [\Sigma M,BS^3]\xra{(\Sigma i)^\sharp}\Z^k\oplus T\oplus \bigoplus_{i=1}^{l-c-\varepsilon}\z{}\to 0,\tag{SES-1}\label{ses1}\]
where $\varepsilon=0$ if $M$ is spin, otherwise $\varepsilon=1$.

Write $T=T_3\oplus T_{\geq 5}$, where $T_3\cong \bigoplus_{j=1}^t \Z/3^{r_j}$ and $T_{\geq 5}$ consists of $p$-torsion elements of $T$ for $p\geq 5$. It is clear that $T_{\geq 5}$ is a direct summand of $[\Sigma M,BS^3]$. The direct sum
$\bigoplus_{i=1}^{l-c-\varepsilon}\z{}$ corresponds to the wedge summand $\bigvee_{i=1}^{l-c-\varepsilon}S^5$ in the homotopy decomposition of $\Sigma M$ given by Proposition \ref{prop:SM-hbar}. Recall that the co-H-deviation of a map $f\colon \Sigma M\to Y$ between co-H-spaces is the difference between the compositions $\mu_Y \circ f$ and $(f\vee f)\circ \mu_{\Sigma M}$, which lifts to some homotopically unique map $\delta(f)\colon\Sigma M\to\Sigma \Omega Y\wedge \Omega Y$. Here $\mu_Y$ and $\mu_{\Sigma M}$ denote the given comultiplications on $Y$ and $\Sigma M$, respectively. Then by dimension and connectivity reasons, the pinch map $\tau_S\colon \Sigma M\to Y=\bigvee_{i=1}^{l-c-\varepsilon}S^5$ is a co-H-map and hence the induced map 
\[\tau_S^\sharp\colon \bigoplus_{i=1}^{l-c-\varepsilon}\pi_5(BS^3)\to [\Sigma M,BS^3]\] 
is a homomorphism. By Proposition \ref{prop:SM-hbar}, $\tau_S^\sharp$ is a splitting section of the homomorphism $[\Sigma M,BS^3]\xra{(\Sigma i)^\sharp}[\Sigma \ol{M},BS^3]\to \bigoplus_{i=1}^{l-c-\varepsilon}\pi_5(BS^3)$. In other words, the direct sum $\bigoplus_{i=1}^{l-c-\varepsilon}\z{}$ splits off $[\Sigma M,BS^3]$.

Define $h_1$ by the composition $$h_1\colon S^6\xra{\Sigma h} \Sigma \ol{M}\simeq \bigvee_{i=1}^k S^4\vee P^4(T) \vee \bigvee_{i=1}^{l-c} S^{5}\vee Z,$$ where $Z\simeq P^5(T)\vee \bigvee_{i=1}^{l-c}S^3\vee \bigvee_{i=1}^{c} C^5_\eta$.
By the proof of Proposition \ref{prop:SM-hbar} given in Section \ref{sec:SM-proofs}, the map $h_1$ factors as the composition 
\[h_1\colon S^6\xra{\hbar}S^3\vee S^5\vee P^4(3^{r_{j_0}})\vee C^5_\eta\hookrightarrow \bigvee_{i=1}^k S^4\vee P^4(T) \vee\bigvee_{i=1}^{l-c} S^{5}\vee Z,\]
where $\hbar=x\cdot \nu'+y\cdot \eta_5+z_{j_0}\cdot i_3\alpha+w\cdot i_3^\eta\nu'$. Let 
\[\hbar_b=x\cdot \nu'+y\cdot \eta_5+z_{j_0}\cdot\alpha+w\cdot i_3^\eta\nu'\colon S^6\to S^3\vee S^5\vee S^3\vee C^5_\eta\]
and define $h_b$ by the composition 
\[h_b\colon S^6\xra{\hbar_b} S^3\vee S^5\vee S^3\vee C^5_\eta \hookrightarrow \bigvee_{i=1}^k S^4\vee \bigvee_{j=1}^t S^3 \vee\bigvee_{i=1}^{l-c} S^{5}\vee Z.\] 
Consider the following homotopy commutative diagram of homotopy cofibre sequences
\[\begin{tikzcd}
	S^6\ar[r,"h_1"]&\bigvee_{i=1}^k S^4\vee P^4(T)\vee \bigvee_{i=1}^{l-c} S^{5}\vee Z\ar[r]&C_{h_1}\\
	S^6\vee \bigvee_{j} S^3\ar[r,"h_2"]\ar[u,"p_1"]&\bigvee_{i=1}^k S^4\vee \bigvee_j S^3\vee \bigvee_{i=1}^{l-c} S^{5}\vee Z\ar[r]\ar[u,"\id\vee (\bigvee_j i_3)\vee \id\vee \id"]&C_{h_2}\ar[u,"\lambda_b"]
\end{tikzcd}\]
that induces the map $\lambda_b$,
where $h_2|S^6=h_b$ and $h_2|S^3$ is the composition \[\bigvee_jS^3\xra{\bigvee_j 3^{r_j}}\bigvee_{j=1}^t S^3\hookrightarrow \bigvee_{i=1}^k S^4\vee \bigvee_{j=1}^t S^3\vee \bigvee_{i=1}^{l-c} S^{5}\vee Z.\] 
Then it is easy to see that the map $\lambda\colon C_{h_2}\to C_{h_1}$ induces an isomorphism of integral homology groups and hence is a homotopy equivalence by the Whitehead theorem. 
By similar computation and analysis, the homotopy cofibre sequence for $C_{h_2}$ in the above diagram induces a split short exact sequence of groups
\[
	0\to G_{12}\oplus T\to [C_{h_2},BS^3]\to \Z^k\oplus \bigoplus_{i=1}^{l-c-\varepsilon}\z{}\to 0.\tag{SES-2}\label{ses2}
\]

Comparing these two short exact sequences (\ref{ses1}) and (\ref{ses2}), we see that the group $T$ acts trivially on $G_{12}$ in (\ref{ses1}), the action of $\Z^k\oplus \bigoplus_{i=1}^{l-c-\varepsilon}\z{}$ on $T$ in (\ref{ses2}) is trivial, and that the action of $\Z^k\oplus \bigoplus_{i=1}^{l-c-\varepsilon}\z{}$ on $G_{12}$ in (\ref{ses1}) coincides with that in  (\ref{ses2}). 
When $k=l=0$, we compute that $G_{12}\cong \Z/12$, and hence the second exact sequence yields an isomorphism $\pi^3(M)\cong T\oplus \Z/12$.
\end{proof}
\end{proposition}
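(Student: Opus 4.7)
The plan is to translate $\pi^3(M)$ into $[\Sigma M, BS^3]$ via $S^3 = \Omega BS^3$ and then exploit the cofiber sequence obtained by removing the top cell of $M$. Writing $\ol{M}$ for the $5$-skeleton and $h\colon S^5 \to \ol{M}$ for its attaching map, suspending and applying $[-, BS^3]$ produces the long exact sequence
\[
[\Sigma^2 \ol{M}, BS^3] \xra{(\Sigma^2 h)^\sharp} \pi_7(BS^3) \to [\Sigma M, BS^3] \xra{(\Sigma i)^\sharp} [\Sigma \ol{M}, BS^3] \xra{(\Sigma h)^\sharp} \pi_6(BS^3),
\]
where $\pi_6(BS^3) \cong \z{}$ and $\pi_7(BS^3) \cong \Z/12$. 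Since $\Z/12$ is cyclic, the image of $\pi_7(BS^3)$ in $[\Sigma M, BS^3]$ will be a subgroup $G_{12} \subseteq \Z/12$ serving as the kernel in the target short exact sequence.

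Next I would compute $[\Sigma \ol{M}, BS^3]$ by splitting $\Sigma \ol{M}$. The arguments behind Proposition \ref{prop:SM-hbar}, with the attaching map $\hbar$ discarded since $\ol{M}$ has no top cell, yield
\[
\Sigma \ol{M} \simeq \bigvee_{i=1}^k S^4 \vee \bigvee_{i=1}^{l-c}(S^3 \vee S^5) \vee \bigvee_{i=1}^c C^5_\eta \vee P^4(T) \vee P^5(T).
\]
Direct computation with the low-dimensional homotopy groups of $BS^3$ and the defining cofiber sequences of the Moore spaces and $C^5_\eta$ shows that $[P^5(T), BS^3]$, $[C^5_\eta, BS^3]$, and $[S^3, BS^3]$ all vanish (the first because $T$ is $2$-torsion-free, the second because $\eta^\sharp\colon \Z \to \z{}$ is surjective), leaving $[\Sigma \ol{M}, BS^3] \cong \Z^k \oplus T \oplus \bigoplus_{i=1}^{l-c}\z{}$. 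The composite of $\Sigma h$ with the pinch to $\bigvee S^5$ is a sum of $\varepsilon$ copies of $\eta$, where $\varepsilon = 0$ for spin and $\varepsilon = 1$ otherwise, so $(\Sigma h)^\sharp$ annihilates exactly $\varepsilon$ of the $\z{}$ factors, producing the claimed short exact sequence.

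To construct the splitting section over $\bigoplus \z{}$, I would use the pinch map $\tau_S\colon \Sigma M \to \bigvee_{i=1}^{l-c-\varepsilon} S^5$ supplied by Proposition \ref{prop:SM-hbar}; its co-H-deviation takes values in a group that vanishes for dimension-connectivity reasons, so $\tau_S^\sharp$ is a genuine group homomorphism providing the section. The main obstacle, and the step I expect to be technically hardest, is showing that the action of $\Z^k \oplus \bigoplus \z{}$ on $T$ is trivial. My strategy would be to modify the attaching map $\hbar$ to a variant $\hbar_b$ in which the Moore-space component $i_3\alpha$ is lifted to $\alpha$ landing in an added wedge of $3$-spheres (joined back to $T$ by degree-$3^{r_j}$ maps); an integral homology isomorphism plus the Whitehead theorem identifies the resulting cofiber with the original one, while the modified cofiber has a cleaner decomposition in which $T$ appears as an honest direct summand. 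Comparing the two short exact sequences then forces the action to be trivial. In the degenerate case $k = l = 0$, the decomposition of $\Sigma^2 \ol{M}$ reduces to a wedge of Moore spaces, letting one verify $G_{12} = \Z/12$ directly and yielding the isomorphism $\pi^3(M) \cong T \oplus \Z/12$.
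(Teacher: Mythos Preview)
Your proposal is correct and follows essentially the same route as the paper: translate to $[\Sigma M,BS^3]$, run the cofibre long exact sequence, compute $[\Sigma\ol{M},BS^3]$ from the wedge decomposition, use the co-H-deviation argument for the $\bigoplus\z{}$ splitting, and handle the triviality of the action on $T$ by replacing the Moore-space target of $\hbar$ with spheres attached by degree-$3^{r_j}$ maps and invoking Whitehead. The only cosmetic difference is that the paper phrases the comparison step as producing a second short exact sequence (SES-2) with $G_{12}\oplus T$ on the left and $\Z^k\oplus\bigoplus\z{}$ on the right, rather than exhibiting $T$ directly as a summand, but this is exactly your auxiliary-cofibre idea.
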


\begin{remark}\label{rmk:chtp=3}
By Proposition \ref{prop:SM-hbar} or the homotopy equivalence (\ref{eq:SM-SM'}), there is a retraction $\varrho_S\colon \Sigma M\to \bigvee_{i=1}^{k/2}(S^4\vee S^4)$ such that $\varrho_S\circ \Sigma j_S\simeq \id$, where $j_S\colon Y=\bigvee_{i=1}^{k/2}(S^3\vee S^3)\to M$ is the canonical inclusion map. Since $\Sigma M$ has dimension $7$, the co-H-deviation of $\varrho_S$ factors as the composition 
\[D\colon \Sigma M\xra{\Sigma q}S^7\xra{d}\bigvee_{i=1}^k\Sigma S^3\wedge S^3\xra{\mathrm{incl}}\Sigma Y\wedge Y\xra{}\Sigma Y\vee \Sigma Y,  \]
where the last map is the fibre inclusion map of the canonical inclusion $\Sigma Y\vee \Sigma Y\to \Sigma Y\times \Sigma Y$. Note that the composition of last two maps is a sum $\sum_{i,j}[\iota_i,\iota_j]$ of Whitehead products of the form $[\iota_i,\iota_j]$, where $\iota_i$ and $\iota_j$ are inclusions of $S^4$'s into the wedge $Y$ of  $S^4$'s. Consider the exact sequence of groups
\[[\Sigma^2\ol{M},\Sigma Y\vee \Sigma Y]\xra{(\Sigma h_1)^\sharp} \pi_7(\Sigma Y\vee \Sigma Y)\xra{(\Sigma q)^\sharp}[\Sigma M,\Sigma Y\vee \Sigma Y].\]
If the co-H-deviation $D$ is null-homotopic, then $D'=\big(\sum_{i,j}[\iota_i,\iota_j]\big)d$ belongs to the image of $(\Sigma h_1)^\sharp$. By Proposition \ref{prop:SM-hbar} and some computation, we have $D'=x\cdot \Sigma \nu'$ for some $x\in\Z/12$.
Since $\Sigma D'=0$, $\Sigma^2\nu'=2\nu_5$, we must have $x=0$. Thus the co-H-deviation $D$ is null-homotopic if and only if $D'$ is null-homotopic, which is equivalent to the condition that all cup products in the cohomology ring of the homotopy cofibre of $D'$ are zero, by similar arguments to the proof of \cite[Lemma 2.8]{LZ-5mfld}. There seems to not be enough information to derive the last statement, and we cannot conclude that $\varrho_S$ is a co-H-map. Consequently, the induced map 
\[\varrho_S^\sharp\colon \Z^k\to [\Sigma M,BS^3]=\pi^3(M)\]
may not be a homomorphism.
\end{remark}


\begin{proof}[Proof of Theorem \ref{thm:chtp-6mfd}]
Example \ref{ex:chtp=-1} implies the statement (\ref{chtp=5}), 
Corollary \ref{cor:chtp:2n+2} (\ref{chtp2n+2=-2}) with $n=2$ describes $\pi^i(M)$ for $i=4,5$, Proposition \ref{prop:chtp=2} enumerates $\pi^2(M)$ and Proposition \ref{prop:chtp=3} characterizes $\pi^3(M)$, respectively.
\end{proof}

\subsection{Cohomotopy of $2$-connected $8$-manifolds}\label{sec:chtp:n=3}

In this subsection we apply Proposition \ref{prop:SM-8mfd-phi} to  compute the cohomotopy groups $\pi^5(M)$ and $\pi^3(M)$ of a closed orientable smooth $2$-connected $8$-manifold $M$. Recall that there is a homotopy cofibre sequence 
\begin{equation*}
	S^8\xra{\phi} V=S^5\vee S^4\vee C^6_\eta\vee P^5(3^{r_{j_0}})\vee P^6(3^{r_{j_1}})\xra{i}C_{\phi}\xra{q}S^9,
\end{equation*}
where $\phi$ has the expression (\ref{eq:phi}).
Denote by $\iota \colon S^4\to  BS^3$ the bottom inclusion map.

\begin{lemma}\label{lem:Cphi-S}
There is an isomorphism	$[C_{\phi},S^6]\cong \Z\oplus\Z/3^{r_{j_1}}\oplus G_{24}$, where $G_{24}$ is a subgroup of $\Z/24$.

	\begin{proof}
		Let $W_b=S^5\vee S^4\vee C^6_\eta\vee P^5(3^{r_{j_0}})\vee S^5$ and consider the following homotopy commutative diagram that induces the map $\lambda$:
		\[\begin{tikzcd}
			S^8\ar[r,"\phi"]& W\ar[r,"i"]&C_\phi\ar[r,"q"]&S^9\\
			S^8\vee S^5\ar[u,"p_1"]\ar[r,"\phi_b"]&W_b\ar[u,"1\vee 1\vee 1\vee 1\vee i_5"]\ar[r,"i_b"]&C_{\phi_b}\ar[u,"\lambda"]\ar[r,"q_b"]&S^9\vee S^6\ar[u,"p_1"]
		\end{tikzcd},\]
		where 
		\begin{align*}
			\phi_b|S^8&=x\cdot \nu_5+y\cdot \eta_4\nu_5+z\cdot i_4^\eta\nu_4\eta_7 +u_{j_0}\cdot \Sigma^2\tilde{\alpha}_{r_{j_0}}+w_{j_1}\cdot \Sigma^2\alpha,\\
			\phi_b|S^5&\colon S^5\xra{3^{r_{j_1}}}S^5\xra{\imath_5}S^5\vee S^4\vee C^6_\eta\vee P^5(3^{r_{j_0}})\vee S^5.
		\end{align*}
		Then one checks that $\lambda$ is a homology equivalence and hence a homotopy equivalence by the Whitehead theorem. 
		
		Consider the induced exact sequence of abelian groups
		\[ [\Sigma W_b,S^6]\xra{\Sigma \phi_b^\sharp}[S^9\vee S^6,S^6]\xra{q_b^\sharp}[C_{\phi},S^6]\xra{i_b^\sharp} [W_b,S^6]\xra{\phi_b^\sharp}[S^8\vee S^5,S^6].\]
		We compute that 
		\begin{align*}
			\cok(\Sigma\phi_b^\sharp)&\cong\frac{\Z/24}{(24,x+8u_{j_0}+8w_{j_1})}\oplus \Z/3^{r_{j_1}},\\
			\ker(\phi_b^\sharp)&=[W_b,S^6]=[C^6_\eta,S^6]\cong\Z\lra{q_6}.
		\end{align*}
Thus we have a split short exact sequence of abelian groups 
		\[0\to G_{24}=\cok(\Sigma\phi_b^\sharp)\to [C_{\phi_b},S^6]\to \ker(\phi_b^\sharp)\to 0.\] 
 The proof of the Lemma is finished.
	\end{proof}
\end{lemma}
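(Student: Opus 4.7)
The natural plan is to extract $[C_\phi, S^6]$ from the Puppe sequence of the defining cofibration
\[ S^8 \xrightarrow{\phi} V \to C_\phi \to S^9, \]
where $V = S^5 \vee S^4 \vee C^6_\eta \vee P^5(3^{r_{j_0}}) \vee P^6(3^{r_{j_1}})$. Applying $[-,S^6]$ yields the short exact sequence
\[ 0 \to \cok(\Sigma \phi^*) \to [C_\phi, S^6] \to \ker(\phi^*) \to 0, \]
reducing the problem to evaluating the two flanking groups. The obstruction to a direct summand-by-summand computation is that the term $w_{j_1}\, i_5 \Sigma^2 \alpha$ of $\phi$ factors through the bottom sphere of the Moore-space summand $P^6(3^{r_{j_1}}) \subset V$, so the Puppe sequence mixes the behavior of $\phi$ on the sphere factors with that on the Moore-space factor.

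To sidestep this, I would apply a standard Moore-space resolving trick. Replace $P^6(3^{r_{j_1}})$ in $V$ by a new $S^5$ to form $W_b = S^5 \vee S^4 \vee C^6_\eta \vee P^5(3^{r_{j_0}}) \vee S^5$, and lift $\phi$ to a map $\phi_b\colon S^8 \vee S^5 \to W_b$ whose $S^8$ restriction reroutes the $\Sigma^2\alpha$ term directly into the new $S^5$ factor, and whose $S^5$ restriction is the degree-$3^{r_{j_1}}$ self-map into that $S^5$. Attaching the extra 6-cell rebuilds $P^6(3^{r_{j_1}})$, and a cellular chain comparison combined with the Whitehead theorem gives a homotopy equivalence $C_{\phi_b} \simeq C_\phi$.

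With $\phi_b$ in place the flanking terms become tractable. For $[W_b, S^6]$, all summands other than $C^6_\eta$ contribute zero since $\pi_4(S^6)=\pi_5(S^6)=0$ and $[P^5(3^{r_{j_0}}), S^6] = 0$ by Lemma \ref{lem:htps:Moore}; the only contribution is $\pi^6(C^6_\eta) \cong \Z \langle q_6^\eta\rangle$ from Lemma \ref{lem:htpgrps}. Since $q_6^\eta \circ i_4^\eta = 0$ in the defining cofibre sequence of $C^6_\eta$, the map $\phi_b^*$ vanishes, so $\ker(\phi_b^*) \cong \Z$. For $[\Sigma W_b, S^6]$, each of the two $\Sigma S^5 = S^6$ factors contributes a $\Z$, the $\Sigma P^5(3^{r_{j_0}}) = P^6(3^{r_{j_0}})$ factor contributes $\Z/3^{r_{j_0}}$, and the others vanish by Lemmas \ref{lem:htps:Moore} and \ref{lem:htpgrps}. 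Computing $\Sigma\phi_b^*$ on these generators using the stable identities $q_3 \tilde\alpha_r = \alpha$ and $\Sigma^3 \alpha = 8\nu_6$ (from Lemmas \ref{lem:Toda} and \ref{lem:htps:Moore}) describes the image explicitly as a finitely generated subgroup of $\Z/24 \oplus \Z$.

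The extension splits because $\ker(\phi_b^*) \cong \Z$ is free, yielding $[C_\phi, S^6] \cong \Z \oplus \cok(\Sigma\phi_b^*)$. The main obstacle is identifying the cokernel in the form $\Z/3^{r_{j_1}} \oplus G_{24}$ with $G_{24} \subseteq \Z/24$: the relation coming from the new $S^5$ attaching map couples the $2$-primary part of $\pi_9(S^6)$ with the $3^{r_{j_1}}$-torsion in $\pi_6(S^6)$ via the element $\Sigma^3\alpha = 8\nu_6$, so a careful prime-by-prime Smith normal form analysis is needed to separate the claimed cyclic direct summands.
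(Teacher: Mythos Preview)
Your proposal is essentially the paper's proof: the same Moore-space resolution replacing $P^6(3^{r_{j_1}})$ by $S^5$, the same auxiliary map $\phi_b$ on $S^8\vee S^5$, the same homology-equivalence argument to identify $C_{\phi_b}\simeq C_\phi$, and the same Puppe-sequence computation with $\ker(\phi_b^\sharp)\cong\Z\langle q_6^\eta\rangle$ giving a split extension. One minor correction to your closing remark: since $\Sigma^3\alpha=8\nu_6$ is purely $3$-torsion in $\pi_9(S^6)\cong\Z/24$, the coupling you worry about occurs only at the prime $3$ (between the $\Z/3$-part of $\pi_9(S^6)$ and the $\Z$ factor $\pi_6(S^6)$), while the $2$-primary part $\Z/8$ decouples cleanly; the Smith normal form step is then a short $3$-local calculation rather than a genuine obstacle.
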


\begin{proposition}\label{prop:cohtp=5}
	Let $M$ be a closed orientable smooth $2$-connected $8$-manifold with $H_\ast(M)$ given by (\ref{table:HM}), $n=3$. If $T$ is $2$-torsion-free,
	then there is an isomorphism 
	\[\pi^5(M)\cong H^5(M)\oplus G_{24},\]
	where $G_{24}$ is a subgroup of $\Z/24$.

\begin{proof}
By Proposition \ref{prop:SM-8mfd-phi} and Lemma \ref{lem:Cphi-S}, we compute that 
	\begin{align*}
		[\Sigma M,S^6]\cong & \bigoplus_{i=1}^{l-c}[S^6,S^6]\oplus \bigoplus_{i=2}^{c}[C^6_\eta,S^6]\oplus [P^{6}(\frac{T}{\Z/3^{r_{j_1}}}),S^6]\oplus [C_{\phi},S^6]\\
		\cong &\bigoplus_{i=1}^{l-1}\Z\oplus \frac{T}{\Z/3^{r_{j_1}}}\oplus (\Z\oplus\Z/3^{r_{j_1}}\oplus G_{24})
		\cong \Z^{l}\oplus T\oplus G_{24}.
	\end{align*}	
	The proof is finished by applying the Freudenthal suspension isomorphism $[M,S^5]\cong [\Sigma M,S^{6}]$.

\end{proof}
\end{proposition}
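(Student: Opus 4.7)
The plan is to reduce to the stable range via Freudenthal suspension and then exploit the explicit wedge decomposition of $\Sigma M$ from Proposition~\ref{prop:SM-8mfd-phi}. Since $\dim M = 8 = 2\cdot 5 - 2$, suspension yields an isomorphism $\pi^5(M) = [M, S^5] \cong [\Sigma M, S^6]$, so it suffices to compute $[\Sigma M, S^6]$.

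Applying Proposition~\ref{prop:SM-8mfd-phi}, I would split $[\Sigma M, S^6]$ as a direct sum indexed by the wedge summands. The sphere summands $S^4$ and $S^5$ contribute $0$ because $S^6$ is $5$-connected; each $S^6$ contributes $\Z$; each Chang complex $C^6_\eta$ contributes $\Z$ by Lemma~\ref{lem:htpgrps}(\ref{htpgrps:Ceta:stable}); and the odd-torsion Peterson spaces $P^5(G)$ and $P^6(G)$ are handled by a short exact sequence coming from their defining cofibre sequences, contributing $0$ and $G$ respectively (the key inputs being $\pi_5(S^6)=0$ and $\pi_6(S^6)=\Z$). This leaves the one nontrivial piece $[C_\phi, S^6]$, which is exactly what Lemma~\ref{lem:Cphi-S} computes: it is $\Z \oplus \Z/3^{r_{j_1}} \oplus G_{24}$ with $G_{24}$ a subgroup of $\Z/24$.

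Assembling everything, the free $\Z$ contributions total $l$ copies, the $\Z/3^{r_{j_1}}$ from $[C_\phi, S^6]$ combines with the $T/(\Z/3^{r_{j_1}})$ coming from $P^6$ to recover $T$, and $G_{24}$ survives. Hence $\pi^5(M) \cong \Z^l \oplus T \oplus G_{24}$. To finish, I would identify $H^5(M) \cong \Z^l \oplus T$ by Poincar\'e duality together with the Universal Coefficient Theorem applied to the table (\ref{table:HM}) (the $2$-torsion-freeness of $T$ making the $\Ext$ terms unambiguous). The main obstacle is really consolidated into Lemma~\ref{lem:Cphi-S}: once the attaching map $\phi$, with its five-term expression, has been resolved into a concrete cokernel computation for $\Sigma\phi^*$ (absorbing the contributions of $y,z$ which land in classes killed after dualizing, and collapsing $x$, $u_{j_0}$, $w_{j_1}$ into a single congruence modulo $24$), the rest of the proposition is direct-sum bookkeeping.
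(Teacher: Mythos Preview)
Your proposal is correct and follows essentially the same route as the paper's proof: both invoke the Freudenthal suspension isomorphism $\pi^5(M)\cong[\Sigma M,S^6]$, decompose $[\Sigma M,S^6]$ via the wedge splitting of Proposition~\ref{prop:SM-8mfd-phi}, and defer the only nontrivial summand to Lemma~\ref{lem:Cphi-S}. Your write-up is slightly more explicit (spelling out the vanishing of the $S^4,S^5,P^5$ contributions and the identification $H^5(M)\cong\Z^l\oplus T$), but the logical structure is identical.
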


We next provide a characterization of the cohomotopy group $\pi^3(M)$, analogous to the approach taken in Proposition \ref{prop:chtp=3} for $6$-manifolds.

\begin{proposition}\label{prop:cohtp=3}
	Let $M$ be a closed orientable smooth $2$-connected $8$-manifold with $H_\ast(M)$ given by (\ref{table:HM}), $n=3$. If $T$ is $2$-torsion-free, then there is an isomorphism
	\[\pi^3(M)\cong H^3(M)\oplus (\z{})^{l-c}\oplus G,\]
	where $G$ is an abelian group embedding in the short exact sequence
	\[0\to \z{}\to G\to \bigoplus_{i=2}^k\z{}\oplus \z{\delta}\to 0,\]
	where $\delta\in\{0,1\}$.

\begin{proof}
By Proposition \ref{prop:SM-8mfd-phi} and its proof given in Section \ref{sec:SM-proofs}, the suspension of the attaching map $\phi$ of the top cell of $M$ is given by the composition
		\[\phi_1\colon S^8\xra{\phi} V=S^5\vee S^4\vee C^6_\eta\vee P^6(3^{r_{j_0}})\vee P^6(3^{r_{j_1}})\hookrightarrow \Sigma \ol{M}.\]
	  Consider the induced exact sequences of groups 
	\[ [\Sigma V, BS^3]\xra{\Sigma \phi^\sharp}\pi_9( BS^3)\xra{q^\sharp}[C_{\phi}, BS^3]\xra{i^\sharp}[V, BS^3]\xra{\phi^\sharp} \pi_8( BS^3).\]	
	\[ [\Sigma^2\ol{M},BS^3]\xra{(\Sigma \phi_1)^\sharp}\pi_9(BS^3)\xra{(\Sigma q)^\sharp} [\Sigma M,BS^3]\xra{(\Sigma i)^\sharp}[\Sigma \ol{M},BS^3]\xra{\phi_1^\sharp}\pi_8(BS^3).\]
	Note that there are equalities 
	\[\cok\big((\Sigma\phi_1)^\sharp\big)=\cok\big((\Sigma\phi)^\sharp\big),\quad \ker(\phi_1^\sharp)\cong [\Sigma \ol{M}/V,BS^3]\oplus\ker(\phi^\sharp).\] 
	By Toda's results \cite{TodaBook}, we compute that  
	\begin{align*}
		[\Sigma V, BS^3]&\cong \pi_6( BS^3)\oplus \pi_5( BS^3)\oplus [C^7_\eta, BS^3]\oplus [P^7(3^{r_{j_1}}), BS^3]\\
		&\cong \z{}\lra{\iota \eta^2}\oplus \z{}\lra{\iota \eta}\oplus \Z/6\lra{\iota (\Sigma\nu')q_7^\eta}\oplus\Z/3\lra{\iota (\Sigma \alpha)q_7},\\
		[V, BS^3]&\cong \pi_5( BS^3)\oplus \pi_4( BS^3)\oplus [C^6_\eta, BS^3]\\
		&\cong \z{}\lra{\iota \eta}\oplus\Z\lra{\iota }\oplus\Z\lra{\iota \bar{\zeta}}.
	\end{align*}
	Since $\eta\nu_6=0$ (\cite[(5.9), page 44]{TodaBook}), $q_7^\eta i_5^\eta=0=q_7 i_6$, we see that 
	\[\Sigma\phi^\sharp=(x\cdot \nu_6+y\cdot \eta \nu_5+z\cdot i_5^\eta\nu_5\eta+w_{j_1}\cdot i_6\Sigma^3\alpha)^\sharp=0,\] 
	and hence $\cok((\Sigma\phi_1)^\sharp)=\cok(\Sigma\phi^\sharp)\cong\pi_9( BS^3)\cong \z{}$. 
	Using the formulae	$\phi^\sharp=(x\cdot \nu_5+y\cdot \eta \nu_5+z\cdot i_4^\eta\nu_4\eta)^\sharp$ and $\bar{\zeta}i_4^\eta\nu_4\eta=2\nu_4\eta=0$, we compute that 
	\[\ker(\phi^\sharp)=\Z\lra{\iota \bar{\zeta}}\oplus \left\{\begin{array}{ll}
		\Z\lra{\iota }& \text{ if $x$ is odd};\\
		\z{}\lra{\iota \eta}\oplus \Z\lra{2^y\iota }, y\in\{0,1\}&\text{ if $x$ is even}.
	\end{array}\right.\]
The suspension of the homotopy equivalence of $\ol{M}$ in Lemma \ref{lem:olM} with $n=3$ induces the following isomorphisms:
	\begin{align*}
		\ker(\phi_1^\sharp)\cong& [\Sigma \ol{M}/V,BS^3]\oplus\ker(\phi^\sharp)\\
		\cong& \bigoplus_{i=2}^{k} \pi_5(BS^3)\oplus \bigoplus_{i=2}^{l-c}\pi_4(BS^3)\oplus \bigoplus_{i=1}^{l-c}\pi_6(BS^3)\\
		&\oplus \bigoplus_{i=2}^c [C^6_\eta,BS^3] \oplus\ker(\phi^\sharp)\\
		\cong& \Z^l\oplus\bigoplus_{i=1}^{l-c}\z{}\lra{\iota\eta^2}\oplus\bigoplus_{i=2}^{k} \z{}\lra{\iota \eta}\oplus\z{\delta_x},
	\end{align*}
	where $\delta_x=1$ if $x$ is odd, otherwise $\delta_x=0$. 
	
	With $\ker(\phi_1^\sharp)$ computed above,  we get a short exact sequence 
	\[0\to \z{}\to [\Sigma M,BS^3]\to \ker(\phi_1^\sharp)\to 0.\]
 Since $\Aut(\z{})=\{id\}$, there is only one group action of $\ker(\phi_1^\sharp)$ on $\z{}$, i.e., the trivial action, and hence the group $[\Sigma M,BS^3]$ is abelian. The subgroup $\Z^l$ of $\ker(\phi_1^\sharp)$ clearly splits off $[\Sigma M,BS^3]$ and hence is a direct summand.
The pinch map $\tau_S\colon\Sigma M\to \bigvee_{i=1}^{l-c}S^6$ given by Proposition \ref{prop:SM-8mfd-phi} is a co-H-map and the induced homomorphism $\tau_S^\sharp$ is a splitting section of the homomorphism 
\[ [\Sigma M,BS^3]\to \ker(\phi_1^\sharp)\xra{\mathrm{proj.}}\bigoplus_{i=1}^{l-c}\z{}\lra{\iota\eta^2}.\]
Therefore, we get an isomorphism 
\[[\Sigma M,BS^3]\cong \Z^l\oplus \bigoplus_{i=1}^{l-c}\z{}\oplus G_x,\]
where $G_x$ is an abelian group characterized by the short exact sequence
\[0\to\z{}\to G_x\to \bigoplus_{i=2}^k\z{}\oplus \z{\delta_x}\to 0.\]
The proof of the Proposition is finished.
\end{proof}
\end{proposition}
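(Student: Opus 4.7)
The plan is to exploit the isomorphism $\pi^3(M) \cong [\Sigma M, BS^3]$ obtained from the loop-space equivalence $S^3 \simeq \Omega BS^3$, and then apply the suspension decomposition from Proposition \ref{prop:SM-8mfd-phi}. Writing $\ol{M}$ for the $7$-skeleton, the suspension of the top-cell attaching map $h\colon S^7\to\ol{M}$ factors as a map $\phi_1\colon S^8\to \Sigma\ol{M}$ through the wedge inclusion $V \hookrightarrow \Sigma\ol{M}$, where $V = S^5 \vee S^4 \vee C_\eta^6 \vee P^5(3^{r_{j_0}}) \vee P^6(3^{r_{j_1}})$, with the factored map being the $\phi$ of (\ref{eq:phi}). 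The cofibre sequence $S^8 \xrightarrow{\phi_1} \Sigma\ol{M} \to \Sigma M \to S^9$ then yields the exact sequence
\[ [\Sigma^2\ol{M}, BS^3] \xrightarrow{(\Sigma\phi_1)^\sharp} \pi_9(BS^3) \to [\Sigma M, BS^3] \to [\Sigma\ol{M}, BS^3] \xrightarrow{\phi_1^\sharp} \pi_8(BS^3), \]
reducing the problem to computing $\cok((\Sigma\phi_1)^\sharp)$ and $\ker(\phi_1^\sharp)$.

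The first step is to show $\cok((\Sigma\phi_1)^\sharp) \cong \Z/2$: using the explicit expression of $\phi$ together with the Toda relations $\eta\nu_6 = 0$ and $q_{m+2}^\eta i_m^\eta = 0$, and noting that the odd-primary summands of $\phi$ cannot hit the $2$-torsion group $\pi_9(BS^3) \cong \Z/2$, one shows $(\Sigma\phi)^\sharp$ vanishes. Next, I would enumerate $[\Sigma\ol{M}, BS^3]$ term by term, using that for $T$ $2$-torsion-free the Moore summands $P^5(T), P^6(T)$ both contribute $0$ (since odd multiplication acts invertibly on $\pi_4(BS^3)=\Z/2$ and $\pi_5(BS^3)=\Z/2$), leaving $(\Z/2)^k$ from $S^5$'s, $\Z^{l-c}$ from $S^4$'s, $(\Z/2)^{l-c}$ from $S^6$'s, and $\Z^c$ from the Chang summands $C_\eta^6$ (via the cofibre sequence and surjectivity of $\eta^\sharp\colon \pi_4(BS^3)\to\pi_5(BS^3)$).

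Computing $\phi_1^\sharp$ then amounts to computing $\phi^\sharp$ on $[V, BS^3]$ and noting that all other summands of $[\Sigma\ol{M}, BS^3]$ lie in the kernel. Using the relations $\bar\zeta i_4^\eta\nu_4\eta_7 = 2\nu_4\eta_7 = 0$ and analyzing the interaction between the $\Z\lra{\iota}$ and $\Z/2\lra{\iota\eta}$ summands with the coefficients $x, y$ in $\phi$ (through $\iota\eta\nu_5\in\pi_8(BS^3)$), I would identify
\[ \ker(\phi_1^\sharp) \cong \Z^l \oplus \bigoplus_{i=1}^{l-c}\Z/2 \oplus \bigoplus_{i=2}^{k}\Z/2 \oplus \z{\delta}, \]
for some $\delta \in \{0,1\}$. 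Assembling, $\pi^3(M)$ fits in a central extension $0 \to \Z/2 \to \pi^3(M) \to \ker(\phi_1^\sharp) \to 0$, whose middle term is automatically abelian since $\Aut(\Z/2)$ is trivial. To split off the $H^3(M) \cong \Z^l$ and $(\Z/2)^{l-c}$ factors, I would show that the canonical inclusion $\bigvee S^4 \hookrightarrow \Sigma M$ and the pinch map $\tau_S\colon \Sigma M \to \bigvee_{i=1}^{l-c} S^6$ are co-$H$-maps by verifying that their co-$H$-deviations factor through Whitehead-product terms that vanish for dimensional reasons, analogously to Remark \ref{rmk:chtp=3}.

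The main obstacle is the co-$H$-structure analysis required for the splittings: although the target wedges $\bigvee S^4$ and $\bigvee S^6$ are simple, verifying vanishing of the relevant co-$H$-deviations—which a priori live in homotopy classes of maps into $\Sigma \Omega(\bigvee S^4)\wedge \Omega(\bigvee S^4)$—is delicate, particularly for the $\bigvee S^4$ case where deviation terms could potentially factor through $\Sigma \nu'$. A secondary subtlety is the precise dependence of $\ker(\phi^\sharp)$ on the coefficients $x, y$ of $\phi$ in the $(\Z\lra{\iota}, \Z/2\lra{\iota\eta})$-block, which is what ultimately produces the single parameter $\delta$ rather than a two-parameter family of extensions.
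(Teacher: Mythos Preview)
Your approach is essentially identical to the paper's: the same use of $\pi^3(M)\cong[\Sigma M,BS^3]$, the same cofibre sequence for $\Sigma M$, the same reduction to $\cok((\Sigma\phi)^\sharp)$ and $\ker(\phi^\sharp)$ via the factorization through $V$, and the same observation that $\Aut(\Z/2)$ is trivial forces the extension to be abelian. The computations of kernel and cokernel also proceed just as you outline.

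Where you diverge is in the splitting of $\Z^l\cong H^3(M)$, and here you are making it harder than necessary. Once the extension $0\to\Z/2\to[\Sigma M,BS^3]\to\ker(\phi_1^\sharp)\to 0$ is known to be a central extension of abelian groups with $\ker(\phi_1^\sharp)\cong\Z^l\oplus(\text{torsion})$, the free summand $\Z^l$ splits off \emph{purely algebraically}: $\Ext(\Z^l,\Z/2)=0$. No co-$H$-deviation analysis on a map involving $\bigvee S^4$ is needed---and note that part of $\Z^l$ comes from the summands $[C^6_\eta,BS^3]\cong\Z$, not from $S^4$'s, so your proposed map would not capture all of it anyway. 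This is exactly how the paper handles it (``clearly splits off''), and it dissolves what you flag as the main obstacle. The co-$H$-map argument is genuinely used only for the pinch $\tau_S\colon\Sigma M\to\bigvee_{i=1}^{l-c}S^6$, where it goes through for the dimension reasons you indicate. One minor slip: $\pi_4(BS^3)\cong\pi_3(S^3)\cong\Z$, not $\Z/2$; your conclusion that $[P^5(T),BS^3]=0$ for $2$-torsion-free $T$ is still correct, but the reason on that summand is injectivity of odd multiplication on $\Z$, not invertibility.
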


Similar to Remark \ref{rmk:chtp=3}, we can not prove that the retraction map $\Sigma M\to \bigvee_{i=2}^k S^5$ given by Proposition \ref{prop:SM-8mfd-phi} is a co-H-map, so the group $\bigoplus_{i=2}^k\z{}$ may not splits off $G$.  The following example gives possible values of the direct summand $G=G_x$ of $\pi^3(M)$.

\begin{example}\label{lem:Cx-BS3}
If $M=S^4\cup e^8$, then there is a homotopy equivalence 
\[\Sigma M\simeq C_{x\cdot \nu_5}=S^5\cup_{x\cdot \nu_5}e^9\] 
 for some $x\in\Z/24$. There are isomorphisms
	\[\pi^3(M)\cong \left\{\begin{array}{ll}
	  \z{},& \text{ if } x\equiv 1\pmod 2;\\
	  	\z{}\oplus\z{},&\text{ if } x\equiv 0\pmod 4;\\
	  	\Z/4, & \text{ if } x\equiv 2\pmod 4.
	\end{array}\right.\] 
	
	\begin{proof}
		By Toda's computation \cite{TodaBook}, we have 
		\[\pi_9( BS^3)\cong \z{}\lra{\iota (\Sigma\nu')\eta^2},\quad \pi_8( BS^3)\cong \z{}\lra{\iota (\Sigma\nu')\eta=\iota \eta\nu_5}.\]
		The homotopy cofibre sequence $S^8\xra{x\cdot \nu_5} S^5\xra{i_5}C_{x\cdot \nu_5}\xra{q_9}S^9$ induces a short exact sequence of groups 
		\[0\to \z{}\to [C_{x\cdot \nu_5}, BS^3]\to \z{1-\delta_x}\to 0,\]
		where $\delta_x=1$ if $x$ is odd, otherwise $\delta_x=0$.
		
		If $x$ is odd, $[C_{x\cdot \nu_5}, BS^3]\cong\z{}$ and we are done. Now we assume that $x$ is even.  $\nu_5$ is a suspension implying that $x\cdot \nu_5=(x\cdot \id_5)\circ \nu_5$, and hence there is a homotopy commutative diagram that induces $\lambda(x)$:
		\[\begin{tikzcd}
			S^8 \ar[r,"\phi"]\ar[d,"\nu_5"]&S^5 \ar[d,equal]\ar[r,"i_5"]&C_{x\cdot \nu_5}\ar[d,"\lambda(x)"]\ar[r,"q_9"]&S^9\ar[d,"\nu_6"]\\
			S^5\ar[r,"x\cdot \id_5"]&S^5\ar[r,"i_5"]&P^6(x)\ar[r,"q_6"]&S^6
		\end{tikzcd}.\]
		Since $\pi_6( BS^3)\cong\pi_5( BS^3)\cong\z{}$, the above diagram induces a commutative diagram of short exact sequences of groups
		\[\begin{tikzcd}[column sep=small] 
			0\ar[r]& \pi_9( BS^3)\ar[r,"q_9^\sharp"]& {[C_{x\cdot \nu_5}, BS^3]}\ar[r,"i_5^\sharp"]& \z{}\ar[r]&0\\
			0\ar[r]& \pi_6( BS^3)\ar[u,"\nu_6^\sharp","\cong"swap]\ar[r,"q_6^\sharp"]& {[P^6(x), BS^3]}\ar[u,"\lambda(x)^\sharp"]\ar[r,"i_5^\sharp"]& \z{}\ar[u,equal]\ar[r]&0
		\end{tikzcd}.\]
		Thus for even $x$ we have $[C_{x\cdot \nu_5}, BS^3]\cong [P^6(x), BS^3]\cong [P^5(x), S^3]$, whose group structure follows by Lemma \ref{lem:htps:Moore} (\ref{Sn+2Pn+1}).
	\end{proof}	
\end{example}


\begin{proof}[Proof of Theorem \ref{thm:chtp-8mfd}]
The cohomotopy groups $\pi^i(M)$ for $i\geq 5$ are characterized by Corollary \ref{cor:chtp:2n+2} with $n=3$ and Proposition \ref{prop:cohtp=5}, the group  $\pi^3(M)$ refers to Proposition \ref{prop:cohtp=3}.
\end{proof}

\subsection{Cohomotopy of $3$-connected $10$-manifolds}\label{sec:chtp:n=4}
In this subsection we enumerate the cohomotopy set $\pi^i(M)$ of a $3$-connected $10$-manifold $M$ for $i=3,5$. 

\begin{lemma}\label{lem:10-mfd:MS3}
	Let $M$ be a $3$-connected 10-manifold with $H_\ast(M)$ given by (\ref{table:HM}), $n=4$. Let $c$ be given by Proposition \ref{prop:SM-10mfd}. 	There is an isomorphism 
	\[\pi^3(M)\cong \big(\bigoplus_{i=1}^{k+l-c}\z{}\big)\oplus\big(\bigoplus_{i=1}^{l-c}\Z/12\big)\oplus \big(\bigoplus_{i=1}^c\Z/6\big)\oplus (T\otimes \Z/3).\]
	\begin{proof}
	By Proposition \ref{prop:SM-10mfd}, the suspension of the attaching map of the top cell of $M$ is given by the composition
	\[\varphi_1\colon S^{10}\xra{\varphi} S^7\vee S^5\vee C^7_\eta\vee C^7_\eta\vee P^7(3^{r_{j_0}})\hookrightarrow \Sigma \ol{M},\]
	where $\varphi$	is given by (\ref{eq:varphi}). 
	Since $\pi_9(S^3)=\pi_{10}(S^3)=0$ \cite{TodaBook}, we have an isomorphism 
\[[\Sigma M,BS^3]\xra[\cong]{(\Sigma i)^\sharp} [\Sigma \ol{M},BS^3].\]	
It is easy to compute that $[C^6_\eta,S^3]\cong \Z/6$, $[P^6(p^r),S^3]\cong\Z/(3,p)$ for odd primes $p$.
The suspension of the homotopy equivalence of $\ol{M}$ in Lemma \ref{lem:olM} with $n=4$ induces the following isomorphisms:
\begin{align*}
	[\Sigma \ol{M},B S^3]&\cong \bigoplus_{i=1}^k \pi_6(B S^3)\oplus \bigoplus_{i=1}^{l-c}(\pi_5(B S^3)\oplus\pi_7(B S^3)) \\
	&\oplus \bigoplus_{i=1}^c [C^7_{\eta},B S^3] \oplus  [P^{6}(T),B S^3]\oplus [P^{7}(T),B S^3] \\
	&\cong \big(\bigoplus_{i=1}^{k+l-c}\z{}\big)\oplus\big(\bigoplus_{i=1}^{l-c}\Z/12\big)\oplus \big(\bigoplus_{i=1}^c\Z/6\big)\oplus (T\otimes \Z/3). 
\end{align*}
We then apply the natural isomorphism $[M,S^3]\cong [\Sigma M,B S^3]$ to complete the proof.
	\end{proof}
\end{lemma}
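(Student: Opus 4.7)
The plan is to compute $\pi^3(M) = [M,S^3]$ by passing through the classifying space $BS^3$ and then exploiting the suspension decomposition of $M$. Since $S^3 \simeq \Omega BS^3$, the suspension adjunction gives a natural isomorphism $\pi^3(M) \cong [\Sigma M, BS^3]$, reducing the problem to a homotopy set into $BS^3$ whose relevant homotopy groups $\pi_n(BS^3) = \pi_{n-1}(S^3)$ are tabulated in Lemma \ref{lem:Toda}. I would first simplify by passing to the suspension of the $9$-skeleton $\ol{M}$: the top-cell cofibre sequence $S^9 \to \ol{M} \to M$ induces on $[\Sigma(-), BS^3]$ a Puppe exact sequence whose flanking terms are $\pi_{10}(BS^3) = \pi_9(S^3) = 0$ and $\pi_{11}(BS^3) = \pi_{10}(S^3) = 0$, so the restriction $[\Sigma M, BS^3] \to [\Sigma \ol{M}, BS^3]$ is an isomorphism and the complicated cofibre piece $C_\varphi$ from Proposition \ref{prop:SM-10mfd} contributes nothing.

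Next I would invoke a companion wedge splitting of $\Sigma \ol{M}$, the analogue of Proposition \ref{prop:SM-10mfd} for the $9$-skeleton, expressing $\Sigma \ol{M}$ as a wedge of spheres $S^5, S^6, S^7$, Chang complexes $C^7_\eta$, and Moore spaces of the form $P^6(T)$ and $P^7(T)$; such a splitting is the analogue of Propositions \ref{prop:SM-hbar} and \ref{prop:SM-8mfd-phi} and follows from Chang's classification (Lemma \ref{lem:Chang}) once the necessary extension problems are checked. With the splitting in hand, I would compute $[X, BS^3]$ for each wedge summand: the sphere pieces contribute $[S^5, BS^3] \cong [S^6, BS^3] \cong \z{}$ and $[S^7, BS^3] \cong \Z/12$; the adjunction together with the cofibre sequence $S^5 \xra{\eta} S^4 \to C^6_\eta$ and the Toda relation $\eta^3 = 6\nu' \in \pi_6(S^3) = \Z/12$ identifies $[C^7_\eta, BS^3] \cong [C^6_\eta, S^3] \cong \Z/6$; and each Moore space $P^n(p^r)$ contributes via the Puppe short exact sequence
\[0 \to \mathrm{coker}\bigl(p^r \colon \pi_n(S^3) \to \pi_n(S^3)\bigr) \to [P^n(p^r), S^3] \to \ker\bigl(p^r \colon \pi_{n-1}(S^3) \to \pi_{n-1}(S^3)\bigr) \to 0.\]
Since $p$ is odd, multiplication by $p^r$ acts invertibly on the $2$-primary stems, so only the $3$-primary torsion of $T$ survives, producing $T \otimes \Z/3$ after summing over all primary components. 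Assembling the contributions gives the claimed direct sum.

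The main obstacle is securing the wedge splitting of $\Sigma \ol{M}$, which requires systematically showing that none of the potential attaching maps between the elementary pieces is essential; this parallels the proofs of Propositions \ref{prop:SM-hbar} and \ref{prop:SM-8mfd-phi}, and the hypothesis that $T$ is $2$-torsion-free eliminates the dominant $2$-primary obstructions, while the integer $c$ is tailored to absorb exactly the $Sq^2$-linked $(S^5, S^7)$ pairs into $C^7_\eta$ summands. Once the splitting is established, the final assembly into the stated direct sum is purely mechanical.
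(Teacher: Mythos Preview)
Your approach is essentially the paper's. The only correction concerns what you call the ``main obstacle'': for $n=4$ the complement $\ol{M}$ of the top cell is already a $3$-connected complex of dimension at most $6$, so Chang's classification (Lemma~\ref{lem:Chang}) applies to $\ol{M}$ itself, not merely to its suspension, and yields the wedge decomposition immediately (this is recorded as Lemma~\ref{lem:olM} for $n\geq 3$); there are no extension problems or attaching maps to analyze, and the rest of your argument---the Puppe sequence using $\pi_9(S^3)=\pi_{10}(S^3)=0$ and the summand-by-summand computation---matches the paper exactly.
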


Next we enumerate $\pi^5(M)$ in terms of the EHP fibre sequence 
\[ \Omega^2S^6\xra{\Omega H}\Omega^2S^{11}\xra{P}S^5\xra{E}\Omega S^6\xra{H}\Omega S^{11},\]
where $H$ is the second James-Hopf map. There is an induced exact sequence of based sets 
\[[M,\Omega^2S^6]\xra{(\Omega H)_\sharp}[M,\Omega^2S^{11}]\xra{P_\sharp}\pi^5(M)\xra{E_\sharp}[M,\Omega S^6]\xra{H_\sharp}[M,\Omega S^{11}].\]
For an element $\bar{u}\in [M,\Omega S^6]$ that is a suspension, it is well-known that the canonical group action of $[M,\Omega^2S^{11}]$ on $E_\sharp^{-1}(\bar{u})$ is transitive; see also \cite[Theorem 3]{KMT2012}. Let $u\in\pi^{5}(M)$ be an element satisfying $E_\sharp(u)=\Sigma u=\bar{u}$. Then by \cite[Theorem 5.2]{Taylor2012}, there is a bijection between $E_\sharp^{-1}(\bar{u})$ and the cokernel of the homomorphism 
\begin{equation}\label{eq:varPhi}
	\varPhi=\varphi_u +(\Omega H)_\sharp\colon [M,\Omega^2S^6]\xra{}[M,\Omega^2S^{11}],
\end{equation}
where $\varphi_u(v)=v\wedge_w u$ is the composition 
\[M\xra{\Delta}M\wedge M\xra{v\wedge u}\Omega^2S^6\wedge S^5 \xra{\mathfrak{D}'}\Omega^2S^{11}\]
for some map $\mathfrak{D}'$ associated to the map $H$. We show the homomorphism $\varPhi$ is trivial by the following Lemmas \ref{lem:cv=0} and \ref{lem:MS5:JH}.

Since the double suspension $E^2\colon S^4\to \Omega^2S^6$ is $7$-connected and $\dim(M)=10$, there is a factorization of the map $\varphi_u(v)$: 
\begin{equation}\label{diag:MS5}
	\begin{tikzcd}
		M\wedge M\ar[r,"u\wedge v"]\ar[r]&S^5\wedge \Omega^2S^6\ar[r,"\mathfrak{D}'"]&\Omega^2S^{11}\\
		M\ar[u,"\Delta"]\ar[r,"\ol{\varphi}_u(v)"]&S^5\wedge S^4\ar[u,"\id_5\wedge E^2"]\ar[r,"c_v"]&S^9\ar[u,"E^2"]
	\end{tikzcd}, 
\end{equation}
where $c_v$ is the map of degree $c_v$. Note that the map $\ol{\varphi}_u$ is unique up to homotopy and the diagram (\ref{diag:MS5}) holds for any complexes $M$ of dimension at most $12$. 

Let $\caL X=\mathrm{map}(S^1,X)$ be the free loop space and let $\caL f\colon \caL X\to \caL Y$ be the induced map. Note that for an $H$-space $X$, the evaluation homotopy fibration 
$\Omega X\xra{j}\caL X\xra{\mathrm{ev}} X$ admits a homotopy section, and hence there is a homotopy equivalence $\caL X\simeq X\times \Omega X$.

\begin{lemma}\label{lem:cv=0}
The map $c_v$ in the diagram (\ref{diag:MS5}) has degree $0$, and hence $\varphi_u(v)=0$.
	\begin{proof}
	By the second homotopy commutative square of the diagram (\ref{diag:MS5}), we see that $c_v$ is determined by the induced homomorphism 
	\[\mathfrak{D}'_\ast\colon H_9(\Omega^2S^6\wedge S^5)\to H_9(\Omega^2S^{11}),\quad \mathfrak{D}'_\ast(a_4\otimes \iota_5)=c_v\cdot d_9,\]
	where $ a_4\in H_4(\Omega^2S^6),\iota_5\in H_5(S^5)$ and $d_9\in H_9(\Omega^2S^{11})$ are the chosen  generators. 	
	Let $b_5\in H_5(\Omega S^6)$ be the generator corresponding to $\iota_5\in H_5(S^5)$, then by \cite[Corollary 5.5]{Taylor2012}, the homomorphism (isomorphism)
	\[j_\ast\colon H_9(\Omega^2S^{11})\to H_9(\caL \Omega S^{11})\]
	sends $\mathfrak{D}'_\ast(a_4\otimes b_5)$ to $(\caL H)_\ast(a_4\otimes b_5)$.  

The homotopy commutative diagram 
\[\begin{tikzcd}
	\Omega^2 S^6\times \Omega S^6\ar[r,"\simeq"]\ar[d,"\Omega H\times H"]&\caL \Omega S^6\ar[d,"\caL H"]\\
	\Omega^2S^{11}\times \Omega S^{11}\ar[r,"\simeq"]&\caL \Omega S^{11}
\end{tikzcd}\]
induces the following commutative diagram of homology groups 
\[ \begin{tikzcd}
H_9(\Omega^2 S^6\times \Omega S^6)\ar[r,"\cong"]\ar[d,"(\Omega H\times H)_\ast"]&H_9(\caL \Omega S^6)\ar[d,"(\caL H)_\ast"]\\
	H_9(\Omega^2S^{11}\times \Omega S^{11})\ar[r,"\cong"]&H_9(\caL \Omega S^{11})
\end{tikzcd}.\]
Then the equality 
\[(\Omega H\times H)_\ast(a_4\otimes \iota_5)=(\Omega H)_\ast(a_4)\otimes H_\ast(\iota_5)=0\] 
implies that $(\caL H)_\ast(a_4\otimes b_5)=0$. Hence $\mathfrak{D}'_\ast(a_4\otimes b_5)=0$, that is, $c_v=0$.
	\end{proof}
\end{lemma}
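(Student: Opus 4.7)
The plan is to show $c_v = 0$ by a direct homological computation. Since $\Omega^2 S^{11}$ is $8$-connected with $\pi_9(\Omega^2 S^{11}) \cong \pi_{11}(S^{11}) \cong \Z$, the degree of the composite $S^9 \xra{c_v} S^9 \xra{E^2} \Omega^2 S^{11}$ is detected on $H_9$. Thus it suffices to prove $\mathfrak{D}'_\ast(a_4 \otimes \iota_5) = 0$, where $a_4 \in H_4(\Omega^2 S^6)$, $\iota_5 \in H_5(S^5)$ and $d_9 \in H_9(\Omega^2 S^{11})$ are the chosen generators, so that $\mathfrak{D}'_\ast(a_4 \otimes \iota_5) = c_v \cdot d_9$.

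To access $\mathfrak{D}'_\ast$ I would invoke Taylor's free-loop-space interpretation \cite[Corollary 5.5]{Taylor2012}: under the canonical inclusion $j\colon \Omega^2 S^{11} \hookrightarrow \caL \Omega S^{11}$, the composite $j_\ast \circ \mathfrak{D}'_\ast$ coincides with $(\caL H)_\ast$ applied to the corresponding class in $H_9(\caL \Omega S^6)$, where $H\colon \Omega S^6 \to \Omega S^{11}$ is the second James--Hopf map. Since $\Omega S^6$ and $\Omega S^{11}$ are $H$-spaces, the evaluation fibrations $\Omega X \to \caL X \to X$ admit sections, yielding natural homotopy equivalences $\caL \Omega S^6 \simeq \Omega S^6 \times \Omega^2 S^6$ and $\caL \Omega S^{11} \simeq \Omega S^{11} \times \Omega^2 S^{11}$, through which $\caL H$ is identified with $H \times \Omega H$ up to homotopy.

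Under this dictionary the relevant class on $H_9(\caL \Omega S^6)$ corresponds to $b_5 \otimes a_4 \in H_5(\Omega S^6) \otimes H_4(\Omega^2 S^6)$, and by Künneth
\[ (H \times \Omega H)_\ast(b_5 \otimes a_4) = H_\ast(b_5) \otimes (\Omega H)_\ast(a_4) = 0, \]
because $\widetilde{H}_\ast(\Omega S^{11})$ is concentrated in degrees divisible by $10$, so in particular $H_\ast(b_5) \in H_5(\Omega S^{11}) = 0$. Naturality, combined with the fact that $j_\ast$ is injective on $H_9$ (the $\Omega S^{11}$ factor contributes nothing in degree $9$), then forces $\mathfrak{D}'_\ast(a_4 \otimes \iota_5) = 0$. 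The main obstacle will be the initial bookkeeping: tracking the adjunction that identifies $\iota_5 \in H_5(S^5)$ with $b_5 \in H_5(\Omega S^6)$, checking that Taylor's identification is compatible with the $H$-space splittings of the two free loop spaces, and confirming that the class in $H_9(\caL \Omega S^6)$ really corresponds to the primitive tensor $b_5 \otimes a_4$. Once that bookkeeping is in place, the vanishing is automatic from the elementary fact that the second James--Hopf map is trivial on homology below dimension $10$.
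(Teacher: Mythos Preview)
Your proposal is correct and follows essentially the same route as the paper: reduce to the homology map $\mathfrak{D}'_\ast$ on $H_9$, invoke Taylor's Corollary~5.5 to identify it (via $j_\ast$) with $(\caL H)_\ast$, split the free loop spaces as $\Omega S^n\times\Omega^2 S^n$ so that $\caL H$ corresponds to $H\times\Omega H$, and observe that the relevant K\"unneth tensor vanishes. The only cosmetic difference is that you kill the tensor via $H_\ast(b_5)\in H_5(\Omega S^{11})=0$, whereas the paper's displayed equality points to the other factor $(\Omega H)_\ast(a_4)\in H_4(\Omega^2 S^{11})=0$; either suffices.
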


\begin{lemma}\label{lem:MS5:JH}
The homomorphism	$(\Omega H)_\sharp\colon [C_{\varphi},\Omega S^6]\to [C_{\varphi},\Omega S^{11}]$ is trivial.
   \begin{proof}	
	   Let $\varphi\colon S^{10}\to W=S^7\vee S^5\vee C^7_\eta\vee C^7_\eta\vee P^7(3^{r_{j_0}})$ be given by (\ref{eq:varphi}).  By computation, we get a short exact sequence of abelian groups 
   \begin{equation}\label{ES:MS5}
	   0\to \Z/(x-z,2)\to [C_{\varphi},\Omega S^6]\xra{i_W^\sharp}[W,\Omega S^6]\to 0,
   \end{equation}
	   where $\Z/(x-z,2)$ is zero or generated by $\nu_6\nu_9$ by \cite{TodaBook},  
	   \[[W,\Omega S^6]\cong \pi_7(\Omega S^6)\oplus \pi_5(\Omega S^6)\oplus [C^7_\eta,\Omega S^6]\oplus [C^7_\eta,\Omega S^6]\cong\z{}\oplus\Z^3\]  consists of suspended elements. Then the naturality of $(\Omega H)_\sharp$ implies that it is zero after proving that the exact sequence (\ref{ES:MS5}) is split. 
   
   Note that the formulae $q_7^\eta \tilde{\nu}_7=-\nu_7$ and $\tilde{\zeta}\nu_7=-2\tilde{\eta}_7+\epsilon\cdot i_5^\eta\nu_5\eta^2$ in Lemma \ref{lem:htpgrps} (\ref{htpgrps:Ceta-unst3}) imply that $\Z/(x-z,2)=\z{}$  when $x$ and $z$ are both even, otherwise $\Z/(x-z,2)=0$. In the first case, it suffices to show that the short exact sequence 
   \[0\to \z{}\to [C_{x\cdot \nu_7},\Omega S^6]\xra{i_7^\sharp}\pi_7(\Omega S^6)\to 0\]
   is split when $x$ is even.
   By similar arguments to the proof of Lemma \ref{lem:Cx-BS3}, there is a map $\lambda(x)\colon C_{x\cdot \nu_7}\to P^8(x)$ such that $\lambda(x)i_7^\eta=i_7\in\pi_7(P^8(x))$ for even $x$.
   Then the formula in Lemma \ref{lem:htps:Moore} (\ref{Sn+2Pn+1}) implies that 
   \[\eta\bar{\eta}_{r_x}\lambda(x)i_7^\eta=\eta\bar{\eta}_{r_x}i_7=\eta^2\] 
   for some integer $r_x$. It follows that the above exact sequence for $[C_{x\cdot \nu_7},\Omega S^6]$ is split when $x$ is even. Therefore the short exact sequence (\ref{ES:MS5}) is always splitting and we complete the proof. 
   \end{proof}
   \end{lemma}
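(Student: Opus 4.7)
The plan is to reduce the lemma to the EHP relation $H\circ E\simeq\ast$, after decomposing $[C_\varphi,\Omega S^6]$ into summands whose elements are all suspensions; naturality of $(\Omega H)_\sharp$ then finishes the argument. First I would apply $[-,\Omega S^6]$ to the cofibre sequence $S^{10}\xra{\varphi}W\xra{i_W}C_\varphi\xra{q}S^{11}$ with $W=S^7\vee S^5\vee C^7_\eta\vee C^7_\eta\vee P^7(3^{r_{j_0}})$, obtaining the exact sequence
$$[\Sigma W,\Omega S^6]\xra{(\Sigma\varphi)^\sharp}\pi_{10}(\Omega S^6)\xra{q^\sharp}[C_\varphi,\Omega S^6]\xra{i_W^\sharp}[W,\Omega S^6]\xra{\varphi^\sharp}\pi_9(\Omega S^6).$$
Using the formula (\ref{eq:varphi}) together with Toda's composition rules ($\eta\nu_n=0$ for $n\geq 6$, plus the relations $q_7^\eta\tilde{\nu}_7=-\nu_7$ and $\tilde{\zeta}\nu_7=-2\tilde{\nu}_7+\epsilon\cdot i_5^\eta\nu_5\eta^2$ of Lemma \ref{lem:htpgrps}(\ref{htpgrps:Ceta-unst3})), I would check that $\varphi^\sharp=0$, so $\ker(\varphi^\sharp)=[W,\Omega S^6]\cong\z{}\oplus\Z^3$, and that $\cok((\Sigma\varphi)^\sharp)\cong\Z/(x-z,2)$, generated by $\nu_6\nu_9\in\pi_{11}(S^6)$ when nonzero. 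This yields the short exact sequence
$$0\to\Z/(x-z,2)\to[C_\varphi,\Omega S^6]\xra{i_W^\sharp}[W,\Omega S^6]\to 0.$$

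Next I would invoke naturality of $(\Omega H)_\sharp$ together with the EHP identity $H\circ E\simeq\ast$. Every wedge summand of $W$ is a suspension, and an elementwise computation shows that $[W,\Omega S^6]$ consists entirely of classes in the image of $E_\sharp\colon[-,S^5]\to[-,\Omega S^6]$, so $(\Omega H)_\sharp$ annihilates $\im(i_W^\sharp)$. The cokernel generator $\nu_6\nu_9=\Sigma(\nu_5\nu_8)$ is itself a suspension, so once I show that the short exact sequence splits with a suspended section, the same EHP argument kills the cokernel summand as well.

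To establish the splitting, I would reduce to the case in which $x$ and $z$ are both even (the only case where the cokernel is nonzero), and further to the subcofibre splitting
$$0\to\z{}\to[C_{x\cdot\nu_7},\Omega S^6]\xra{i_7^\sharp}\pi_7(\Omega S^6)\to 0$$
for even $x$. Factoring $x\cdot\nu_7=(x\cdot\id_7)\circ\nu_7$ as in Example \ref{lem:Cx-BS3}, I would construct a map $\lambda(x)\colon C_{x\cdot\nu_7}\to P^8(x)$ restricting to the identity on $S^7$, and then compose with the generator $\bar\eta_{r_x}\colon P^8(x)\to S^6$ from Lemma \ref{lem:htps:Moore}(\ref{Sn+2Pn+1}); the defining relations $\bar\eta_{r_x}i_7=\eta$ and $2\bar\eta_1=\eta^2 q$ give $\eta\cdot\bar\eta_{r_x}\lambda(x)i_7^\eta=\eta^2$, from which a suspended splitting section for the cokernel is manufactured after the appropriate suspension identifications.

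The main obstacle will be this final verification: one must confirm that the section built from $\lambda(x)$ and $\bar\eta_{r_x}$ really splits the exact sequence over the full $C_\varphi$, not merely the subcomplex $C_{x\cdot\nu_7}$, which requires checking that the remaining $C^7_\eta$ and $P^7(3^{r_{j_0}})$ wedge summands of $W$ introduce no additional obstructions and that the generator of the cokernel is correctly identified with $\nu_6\nu_9$. This amounts to a careful bookkeeping exercise with Toda's composition formulae; once completed, naturality together with $H\circ E\simeq\ast$ immediately yields the triviality of $(\Omega H)_\sharp$.
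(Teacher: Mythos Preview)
Your approach is essentially identical to the paper's: set up the short exact sequence from the cofibre of $\varphi$, observe that $[W,\Omega S^6]$ consists of suspensions so that $(\Omega H)_\sharp$ vanishes there, reduce the splitting question to $[C_{x\cdot\nu_7},\Omega S^6]$ for even $x$, and build the section via $\lambda(x)\colon C_{x\cdot\nu_7}\to P^8(x)$ composed with $\bar\eta_{r_x}$. Two small points: your exact sequence has index slips (the end groups should be $\pi_{11}(\Omega S^6)\cong\pi_{12}(S^6)$ and $\pi_{10}(\Omega S^6)\cong\pi_{11}(S^6)$, so $\nu_6\nu_9\in\pi_{12}(S^6)$), and the ``obstacle'' you flag dissolves once you note that the pinch $W\to S^7$ induces a map $C_\varphi\to C_{x\cdot\nu_7}$ compatible with the two short exact sequences, so the section for the smaller cofibre pulls back directly.
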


Summarizing the above discussion, we get the following. 
\begin{proposition}\label{prop:10mfd:MS5}
	Let $M$ be a $3$-connected 10-manifold with $H_\ast(M)$ given by (\ref{table:HM}), $n=4$. Let $u\in\pi^5(M)$ be a preimage of an element $\bar{u}\in \pi^6(\Sigma M)$ under the suspension map $E_\sharp\colon \pi^5(M)\to \pi^6(\Sigma M)$. Then there is a bijection between $E_\sharp^{-1}(u)$ and $\pi^9(M)\cong\z{}$. 
\end{proposition}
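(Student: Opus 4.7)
The plan is to apply Taylor's enumeration \cite[Theorem 5.2]{Taylor2012} to the EHP fibre sequence on $S^5$, which identifies $E_\sharp^{-1}(\bar{u})$ with the cokernel of the homomorphism $\varPhi = \varphi_u + (\Omega H)_\sharp$ of (\ref{eq:varPhi}). The hypothesis that $u$ is a preimage of $\bar{u} = \Sigma u$ guarantees $\bar{u}$ is a suspension, so the action of $[M,\Omega^2 S^{11}]$ on $E_\sharp^{-1}(\bar{u})$ is transitive and Taylor's formula applies. The overall strategy is to show that \emph{both} summands of $\varPhi$ vanish, so that $\cok(\varPhi) = [M,\Omega^2 S^{11}]$, and then to identify this group with $\pi^9(M)$.

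The vanishing of the first summand $\varphi_u$ is immediate from Lemma \ref{lem:cv=0}: the integer $c_v$ in the factorization (\ref{diag:MS5}) is zero for every $v \in [M,\Omega^2 S^6]$, so $\varphi_u(v) = 0$. For the second summand, I would pass through the adjunction $[M,\Omega^2 X] \cong [\Sigma M,\Omega X]$ and decompose $\Sigma M$ by Proposition \ref{prop:SM-10mfd}. For each of the low-dimensional wedge summands $X_i \in \{S^5, S^6, S^7, C^7_\eta, P^6(T), P^7(T/\Z/3^{r_{j_0}})\}$ one has $\dim \Sigma X_i \leq 8$, hence $[X_i,\Omega S^{11}] \cong \pi^{11}(\Sigma X_i) = 0$ and $(\Omega H)_\sharp$ restricted to that summand is trivially zero. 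The only wedge summand that can contribute a priori is $C_\varphi$, of top cell in dimension $10$, and on it Lemma \ref{lem:MS5:JH} supplies exactly the vanishing we need.

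Assembling these, $\varPhi = 0$ and therefore $E_\sharp^{-1}(\bar{u})$ is in bijection with the full group $[M,\Omega^2 S^{11}] = \pi^{11}(\Sigma^2 M)$. Since $\dim \Sigma^2 M = 12 \leq 2\cdot 11 - 2$ this lies in the stable range and coincides with $\pi^9(M) \cong \z{}$ computed in Theorem \ref{thm:chtp-10mfd}(1). The main obstacle is the bookkeeping in the second step: justifying, via the co-$H$-structure of $\Sigma M$ and the $H$-structures of $\Omega S^6$ and $\Omega S^{11}$, that the wedge decomposition of Proposition \ref{prop:SM-10mfd} yields a compatible direct-sum decomposition of $(\Omega H)_\sharp$, and confirming that the statement of Lemma \ref{lem:MS5:JH}—phrased there for $H\colon \Omega S^6 \to \Omega S^{11}$—indeed covers (via adjunction) the piece of $(\Omega H)_\sharp$ supported on $C_\varphi$ that we need to kill.
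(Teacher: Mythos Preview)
Your proposal is correct and follows the same approach as the paper, whose proof is literally ``Summarizing the above discussion'' pointing back to the EHP setup, Taylor's formula, and Lemmas \ref{lem:cv=0} and \ref{lem:MS5:JH}. The extra bookkeeping you flag---that the wedge summands of $\Sigma M$ other than $C_\varphi$ contribute nothing to $[\Sigma M,\Omega S^{11}]$, and that the adjunction $[M,\Omega^2 S^{\ast}]\cong[\Sigma M,\Omega S^{\ast}]$ reconciles Lemma \ref{lem:MS5:JH} with the map $(\Omega H)_\sharp$ on $M$---just makes explicit what the paper leaves to the reader.
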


\begin{proof}[Proof of Theorem \ref{thm:chtp-10mfd}]
	Combine Corollary \ref{cor:chtp:2n+2} with $n=4$, Lemma \ref{lem:10-mfd:MS3} and Proposition \ref{prop:10mfd:MS5}.
\end{proof}

\section{Proofs of Propositions \ref{prop:SM-hbar}, \ref{prop:SM-8mfd-phi} and \ref{prop:SM-10mfd}}\label{sec:SM-proofs}

In this section we give the proofs of the suspension splitting of $(n-1)$-connected $(2n+2$)-manifolds given by Propositions \ref{prop:SM-hbar}, \ref{prop:SM-8mfd-phi} and \ref{prop:SM-10mfd}, respectively.

Throughout this section we assume that $T$ is $2$-torsion-free and set
\begin{equation}\label{eq:T}
	T=T_3\oplus T_{\geq 5},\quad T_3=\bigoplus_{j=1}^{t}\Z/3^{r_j}.
\end{equation} 
where $T_3$ is the $3$-primary component of $T$, $T_{\geq 5}=T/T_3$. Recall that for any integer $m\geq 3$, there is a homotopy equivalence (cf. \cite{Neisenbook})
\[P^{m}(T)\simeq P^{m}(T_3)\vee P^{m}(T_{\geq 5}) \simeq \bigvee_{j=1}^tP^m(3^{r_j}) \vee P^{m}(T_{\geq 5}).\]

Let $M$ be a closed orientable $(n-1)$-connected $(2n+2)$-manifold with $H_\ast(M)$ given by (\ref{table:HM}). 
Let $\ol{M}$ be the complement of the top cell of $M$. It is known that $\ol{M}$ is an $(n-1)$-connected $(2n+2)$-dimensional complex. Applying Chang's classification theorem (Lemma \ref{lem:Chang}), we have the following homotopy decomposition of $\ol{M}$.
\begin{lemma}\label{lem:olM}
	For $n\geq 3$, there is a homotopy equivalence 
	\[\ol{M}\simeq \bigvee_{i=1}^k S^{n+1}\vee \bigvee_{i=1}^{l-c}(S^n\vee S^{n+2})\vee \big(\bigvee_{i=1}^c C^{n+2}_\eta\big)\vee P^{n+1}(T)\vee P^{n+2}(T),\]
	where $c$ is an integer depending on $M$, $0\leq c\leq l$ and $c=0$ if and only if the Steenrod square $Sq^2$ acts trivially on $H^2(M;\z{})$.
	
	For $n=2$, the wedge sum in the above homotopy equivalence gives the homotopy type of the suspension $\Sigma \ol{M}$.

\end{lemma}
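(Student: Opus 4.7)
The plan is to combine a dimension estimate for $\ol{M}$ with Chang's classification theorem (Lemma~\ref{lem:Chang}), and then identify the multiplicity $c$ of Chang summands through the action of $Sq^2$.

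First I would exploit the cofibre sequence $\ol{M}\hookrightarrow M\to M/\ol{M}\simeq S^{2n+2}$ obtained by collapsing the top cell. Its long exact sequence in cohomology, together with the homology of $M$ listed in~\eqref{table:HM}, gives $H^i(\ol{M})\cong H^i(M)$ for $i<2n+1$ and $H^i(\ol{M})=0$ for $i>n+2$. Hence $\ol{M}$ has cohomological dimension at most $n+2$, and being $(n-1)$-connected with $2$-torsion-free homology (since $T$ is by assumption), it satisfies the hypotheses of Chang's theorem whenever $n\geq 3$.

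Next I would apply Lemma~\ref{lem:Chang} to write $\ol{M}$ as a finite wedge of $S^{n+i}$ for $i=0,1,2$, odd-primary Moore spaces $P^{n+j}(p^r)$ for $j=1,2$, and Chang complexes $C^{n+2}_\eta$. The multiplicities are then read off by matching the homology groups $H_n(\ol{M})\cong \Z^l\oplus T$, $H_{n+1}(\ol{M})\cong\Z^k\oplus T$, $H_{n+2}(\ol{M})\cong\Z^l$: the torsion $T$ is absorbed by $P^{n+1}(T)\vee P^{n+2}(T)$; the free part of $H_{n+1}$ forces $k$ copies of $S^{n+1}$; and if $c$ denotes the number of $C^{n+2}_\eta$ summands, then the common rank $l$ in degrees $n$ and $n+2$ forces exactly $l-c$ copies each of $S^n$ and $S^{n+2}$.

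Finally I would identify $c$ with the $Sq^2$ invariant. On $C^{n+2}_\eta$ the operation $Sq^2\colon H^n(\cdot;\z{})\to H^{n+2}(\cdot;\z{})$ is an isomorphism, while it vanishes on every sphere and every odd-primary Moore factor; hence $c$ equals the rank of $Sq^2$ acting on $H^n(\ol{M};\z{})$, and in particular $c=0$ precisely when this $Sq^2$ is trivial. Because the inclusion $\ol{M}\hookrightarrow M$ is a mod-$2$ cohomology isomorphism in degrees $\leq n+2$ (its cofibre being $S^{2n+2}$ with $2n+2\geq n+3$), the criterion transfers to $M$. For the final clause at $n=2$, Chang's theorem does not apply directly to the $1$-connected $4$-complex $\ol{M}$; however $\Sigma\ol{M}$ is $2$-connected of dimension $\leq 5$, which falls under the $n=3$ case of Lemma~\ref{lem:Chang}, and the same enumeration produces the claimed wedge description of $\Sigma\ol{M}$. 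The main technical point to verify carefully will be that the Chang summands are genuinely isolated as a wedge factor rather than entangled with pairs of $S^n\vee S^{n+2}$ — this is what the homotopy invariance of the $Sq^2$ rank is designed to ensure, and it is the hinge on which the whole counting argument turns.
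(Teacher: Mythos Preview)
Your proof is correct and follows the paper's approach: the paper simply invokes Chang's classification theorem (Lemma~\ref{lem:Chang}) without further detail, and you have filled in precisely the homology bookkeeping and $Sq^2$ identification one needs to make that invocation rigorous. Note also that your reading of the $Sq^2$ criterion on $H^n(M;\z{})$ rather than the stated $H^2(M;\z{})$ is the intended one, consistent with Theorem~\ref{thm:SM}.
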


If $n=2$,  based on the Wall's connected sum decomposition \cite{Wall66}
\[M\approx M'\# \overset{k/2}{\underset{i=1}{\#}}(S^3\times S^3),\] 
where $M'$ is a simply connected 6-manifold with $H_3(M';\Q)=0$, 
Huang \cite[Corollary 2.2]{Huang-6mflds} proved that there is a homotopy equivalence 
\begin{equation}\label{eq:SM-SM'}
	\Sigma M\simeq \Sigma M'\vee \bigvee_{i=1}^{k/2}(S^4\vee S^4).
\end{equation}

\begin{remark}\label{rmk:pi-mfd}
	Fang and Pan \cite{FP04} extended Wall's splitting theorem to closed oriented smooth $(n-1)$-connected $(2n+2)$-dimensional $\pi$-manifolds (i.e., stably trivial manifolds) for $n\geq 2$ by proving that there is a homeomorphism $M\approx M'\# \overset{k/2}{\underset{i=1}{\#}}(S^{n+1}\times S^{n+1})$, where $M'$ is a $\pi$-manifold and $H_{n+1}(M';\Q)=0$. The statement when $H_\ast(M)$ is torsion-free is due to Ishimoto \cite{Ishimoto69}.
	 Using similar arguments to that of \cite[Corollary 2.2]{Huang-6mflds}, we see that there is  a homotopy equivalence 
	\[\Sigma M\simeq \Sigma M'\vee \bigvee_{i=1}^kS^{n+1},\] 
	where $M'$ is a $\pi$-manifold and $H_{n+1}(M';\Q)=0$.
\end{remark}

The remainder of this section is devoted to prove Propositions \ref{prop:SM-hbar}, \ref{prop:SM-8mfd-phi} and \ref{prop:SM-10mfd}.  We need the following matrix representation of maps between wedge sums of suspended spaces.
Let $X=\Sigma X'$ and $Y=\Sigma Y'$ be CW-complexes. A map  
$f+g\colon S^m\to X\vee Y$ that doesn't contain Whitehead product components can be denoted by the vector  $\smatwo{f}{g}$, where $f\colon S^6\to X$ and $g\colon S^m\to Y$. 
When $f$ and $g$ are suspended maps, or the attaching map of the homotopy cofibre $C_{f+g}$ doesn't contain Whitehead products, then elementary matrix row operations do not change the homotopy type of $C_{f+g}$; that is, in these two cases there are homotopy equalities 
	\[\mat{\id_X}{0}{h_{YX}}{\id_Y}\matwo{f}{g}=\matwo{f}{h_{YX}f+g},\quad  \mat{\id_X}{h_{XY}}{0}{\id_Y}\matwo{f}{g}=\matwo{f+h_{XY}g}{g},\]
where $h_{IJ}\colon J\to I$ are maps with $I,J\in\{X,Y\}$.

The following lemma is useful to study the homotopy type of a suspension space and will be frequently used in the proofs.

\begin{lemma}[Lemma 6.4 of \cite{HL-7mfd}]\label{lem:HL}
Consider a homotopy cofibre sequence 
\[S\xra{f}(\bigvee_{i=1}^nA_i)\vee B \xra{g}\Sigma C\] 
of simply connected spaces.
 Let 
 \[q_j\colon \big(\bigvee_{i}A_i\big)\vee B\to A_j,\text{ and }q_B\colon \big(\bigvee_{i}A_i\big)\vee B\to B\] 
be the canonical projections, where $j=1,\cdots,n$. Suppose that the composition $q_j\circ f$ is null homotopic for each $j\leq n$, then there is a homotopy equivalence
		\[\Sigma C\simeq \big(\bigvee_{i=1}^nA_i\big)\vee D,\]
		where $D$ is the homotopy cofibre of the composition $q_B\circ f$.

	\end{lemma}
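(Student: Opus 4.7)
The plan is to construct an explicit homotopy equivalence $\Psi : (\bigvee_{i=1}^n A_i) \vee D \to \Sigma C$ and verify it via the Whitehead theorem. The guiding principle is that, under the stated hypotheses, the attaching map $f$ is homotopic to a map that factors through the wedge summand $B \hookrightarrow W$, so that $\Sigma C$ splits off the $A_i$'s that are disjoint from this attachment.

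First, I would build $\Psi$ wedge-summand by wedge-summand. On each $A_i$, take $\Psi|_{A_i} := g \circ \iota_{A_i}$, where $\iota_{A_i} : A_i \hookrightarrow W = (\bigvee_j A_j) \vee B$ is the wedge inclusion and $g : W \to \Sigma C$ is the cofibre quotient. On $D$, I would use the pushout description $D = B \cup_{q_B f} CS$: a map $D \to \Sigma C$ is then specified by the pair consisting of $g \circ \iota_B : B \to \Sigma C$ together with a choice of null-homotopy for its restriction $g \circ \iota_B \circ q_B f : S \to \Sigma C$. Such a null-homotopy exists because $g \circ f \simeq \ast$ canonically (from the given cofibre sequence), while each $q_j \circ f \simeq \ast$ by hypothesis; combining these null-homotopies through the co-H-structure on $S$ allows one to deform $f$ inside $W$ to $\iota_B \circ (q_B f)$, and the induced null-homotopy of $g \circ \iota_B \circ q_B f$ provides the required extension.

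Second, I would verify that $\Psi$ is a homotopy equivalence. Since every space in sight is a simply-connected CW-complex, it suffices by the Whitehead theorem to confirm that $\Psi$ induces isomorphisms on integral homology. This is a direct cellular calculation: both $(\bigvee_i A_i) \vee D$ and $\Sigma C$ carry the cells of $\bigvee_i A_i$, the cells of $B$, and a single top cone cell coming from $CS$; the attaching maps for this top cell agree once one discards the $\bigvee_i A_i$-components, which are split off on both sides by construction of $\Psi$.

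The main obstacle is the first step, namely the rigorous reduction of $f$ to the composite $\iota_B \circ (q_B f)$. In complete generality, the individual vanishing conditions $q_j \circ f \simeq \ast$ eliminate only the ``linear'' components of $f$ into each $A_j$, and do not by themselves control possible Whitehead product contributions in $[S, W]$. The lemma therefore implicitly depends on these Whitehead product terms being trivial, which is automatic in the intended applications: in Propositions \ref{prop:SM-hbar}, \ref{prop:SM-8mfd-phi}, and \ref{prop:SM-10mfd}, $S$ is a sphere whose dimension is small relative to the connectivities of the $A_i$, so the relevant Whitehead products live in vanishing homotopy groups. A clean formal justification would proceed via the Hilton--Milnor decomposition of $\pi_\ast(W)$, isolating the summand of $[S, W]$ detected by the projections $q_j$ and $q_B$ and showing that the remaining summands vanish in the applicable range.
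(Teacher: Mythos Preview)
Your self-identified obstacle is a genuine gap, and your proposed workaround (appealing to Hilton--Milnor vanishing in the applications) misdiagnoses the situation: the lemma is true as stated, with no hidden hypotheses on Whitehead products. The paper's proof takes a different route that sidesteps the deformation of $f$ entirely.

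The key idea you are missing is to exploit the co-$H$-structure on the \emph{target} $\Sigma C$, not on $S$. Since each $q_j\circ f\simeq\ast$, the cofibre property yields extensions $\varrho_j\colon \Sigma C\to A_j$ with $\varrho_j\circ g\simeq q_j$. Now use the comultiplication on $\Sigma C$ to form the sum
\[
\varrho=\sum_{j=1}^n\varrho_j\colon \Sigma C\xra{\mu}\bigvee_{j=1}^n\Sigma C\xra{\bigvee_j\varrho_j}\bigvee_{j=1}^n A_j=A.
\]
One checks that $\varrho\circ g\circ i_A$ induces the identity on integral homology (essentially because $\varrho_i\circ g\circ\iota_j\simeq q_i\circ\iota_j\simeq\delta_{ij}\cdot\id_{A_j}$), hence by Whitehead it is a self-equivalence of $A$; after adjusting $\varrho$, one has $\varrho\circ(g\circ i_A)\simeq\id_A$. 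Thus $g\circ i_A\colon A\to\Sigma C$ admits a retraction. Finally, the $3\times 3$ diagram of horizontal cofibre sequences with rows $\ast\to A=A$, $S\to A\vee B\to\Sigma C$, and $S\xra{q_B f} B\to D$ shows that the third column $A\xra{g\circ i_A}\Sigma C\to D$ is a cofibre sequence, which splits since $g\circ i_A$ has a left inverse.

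Notice how this argument never claims $f\simeq \iota_B\circ(q_B f)$; the Whitehead-product components of $f$ can be nonzero, but the suspension hypothesis on the cofibre absorbs them. Your approach of building $\Psi$ in the other direction cannot succeed without that hypothesis doing some work, and the place it enters is precisely the construction of $\varrho$ as a \emph{sum} of the individual retractions.
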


\begin{proof}[Proof of Proposition \ref{prop:SM-hbar}]\label{proof:SM-hbar}
	Recall from (\ref{eq:SM-SM'}) that $\Sigma M\simeq \Sigma M'\vee \bigvee_{i=1}^kS^4$ for a $6$-manifold $M'$.  
By Lemma \ref{lem:olM}, the suspension of the $5$-skeleton $\ol{M'}$ is given by 
\begin{equation*}
	\Sigma\ol{M'}\simeq \bigvee_{i=1}^{l-c}(S^3\vee S^{5})\vee \big(\bigvee_{i=1}^c C^{5}_\eta\big)\vee P^{4}(T)\vee P^{5}(T).
\end{equation*}
From \cite[Section 4]{CS22}, we know that there is a homotopy equivalence 
\begin{equation}\label{eq:susp:SM'-Ch'}
	\Sigma M'\simeq P^4(T_{\geq 5})\vee P^5(T)\vee C_{h'},
\end{equation}
where $h'$ is the composition
\[h'\colon S^6\xra{h}\Sigma \ol{M'}\xra{\mathrm{pinch}} \bigvee_{i=1}^{l-c}S^3\vee \bigvee_{i=1}^{l-c}S^5\vee \bigvee_{j=1}^{t}P^4(3^{r_j})\vee \bigvee_{i=1}^cC^5_\eta=U'.\]
Moreover, the composition 
\[S^6\xra{h'}U'\twoheadrightarrow \bigvee_{i=1}^{l-c}S^3\vee \bigvee_{i=1}^{l-c}S^5\vee \bigvee_{j=1}^{t}P^4(3^{r_j})\]	
contains no Whitehead products. 

By Lemma \ref{lem:htpgrps}, there are isomorphisms 
		\begin{gather*}
			\pi_6(\Sigma S^2\wedge C^4_\eta)=\pi_6(\Sigma S^4\wedge C^4_\eta)=0,\\
			\pi_6(\Sigma P^3(p^r)\wedge C^4_\eta)\cong \pi_6(P^{6}(p^r)\vee P^{8}(p^r))\cong \pi_6(P^{6}(p^r))=0.
		\end{gather*}
Thus the attaching map $h'$ contains no Whitehead products.

By the groups in Lemma \ref{lem:htpgrps}, we can write 
	\begin{equation*}
		h'=\sum_{i=1}^{l-c}x_i\cdot \nu'+\sum_{i=1}^{l-c}y_i\cdot \eta+\sum_{j=1}^t z_j \cdot i_3^{r_j}\alpha+\sum_{j=1}^c w_j\cdot i_3^\eta\nu',
	\end{equation*}  
	where the coefficients 
	\begin{align*}
		&x_i\in\Z/12=\{0,\pm 1,\pm 2,\cdots,\pm 5,6\},~~y_i\in \z{}=\{0,1\},\\
		&z_j\in \Z/3=\{0,\pm 1\},~~w_j\in\Z/6=\{0,\pm 1,\pm 2,3\};
	\end{align*}
	the superscripts ``$r_j$'' and ``$\eta$'' are added to $i_3$ to distinguish the canonical inclusion maps $i_3\colon S^3\to X$ for $X=P^4(3^{r_j})$ and $C^5_\eta$, respectively. 
	
	Since $\eta=\eta_5$ and $i_3^{\eta}\nu'=\Sigma \pi_2$ are suspended generators, after applying matrix elementary row operations we can assume that 
	\begin{enumerate}
		\item $y_1\in\{0,1\}$ and $y_2=\cdots=y_{l-c}=0$; $y_1=0$ if and only if $M$ is spin.
		\item $w_1\in \{0,\pm 1,\pm 2,3\}$ and $w_2=\cdots=w_c=0$. 
	\end{enumerate}
Using the fact that the attaching map of $\Sigma M'$ contains no Whitehead products, we can assume that 
	\begin{enumerate}
		\setcounter{enumi}{2}
		\item $x_1\in\{0,\pm 1,\pm 2,\pm 3,\pm 4,\pm 5,6\}$ and $x_2=\cdots=x_{l-c}=0$. 
		\item $z_j=0$ for all $1\leq j\leq t$,  or $z_{j_0}=\pm 1$ and $z_j=0$ for $j\neq j_0$. This follows by the homotopy equality
		\[ \mat{\id_P}{0}{-B(\chi^r_s)}{\id_P}\matwo{i_3^r\alpha}{i_3^s\alpha}=\matwo{i_3^r\alpha}{0}\colon S^6\to P^4(3^r)\vee P^4(3^s) \text{ for }r\geq s.\]

	\end{enumerate}
	Then the statements in the Proposition follow by Lemma \ref{lem:HL}.
\end{proof}

\begin{proof}[Proof of Proposition \ref{prop:SM-8mfd-phi}]
By Lemma \ref{lem:olM}, the complement $\ol{M}$ of the top cell in $M$ is given by 
\[\ol{M}\simeq \bigvee_{i=1}^kS^4\vee \bigvee_{i=1}^{l-c}(S^3\vee S^{5})\vee \big(\bigvee_{i=1}^c C^{5}_\eta\big)\vee P^{4}(T)\vee P^{5}(T).\]
Let $\phi'\colon S^7\to \ol{M}$ be the attaching map of the top cell of $M$.
By Lemma \ref{lem:htpgrps} and \ref{lem:htps:Moore}, the following composition
		\begin{align*}
			&S^8\xra{\Sigma \phi}\Sigma \ol{M}\xra{\mathrm{pinch}} P^{a}(p^r) ~~(a=5,6) 
		\end{align*}
is null homotopic for primes $p\geq 5$, hence $P^5(T_{\geq 5})\vee P^6(T_{\geq 5})$ retracts off $\Sigma M$, by Lemma \ref{lem:HL}. Since $M$ is a smooth spin manifold, the secondary operation that detects $\eta^2$ acts trivially on $H^\ast(M;\z{})$ by \cite[Lemma 4.3]{LZ-5mfld}, it follows that the wedge summand $\bigvee_{i=1}^{l-c}S^6$ retracts off $\Sigma M$. Thus there is a homotopy decomposition 
\begin{equation*}
	\Sigma M\simeq \bigvee_{i=1}^{l-c}S^6\vee  P^5(T_{\geq 5})\vee  P^6(T_{\geq 5})\vee C_{\phi''},
\end{equation*}
	where the map $\phi''$ is the composition
	\[\phi''\colon S^8\xra{\Sigma \phi'}\Sigma\ol{M}\xra{\mathrm{pinch}} \bigvee_{i=1}^kS^5\vee \bigvee_{i=1}^{l-c}S^4\vee \bigvee_{i=1}^c C^{6}_\eta\vee \bigvee_{j=1}^t P^{5}(3^{r_j})\vee \bigvee_{j=1}^t P^{6}(3^{r_j})\]  given by the equation 
	\begin{multline}\label{eq:attachmap}
		\phi''=\sum_{i=1}^kx_i\cdot \nu_5 +\sum_{i=1}^{l-c}y_i\cdot \eta_4\nu_5 +\sum_{i=1}^{c}z_i\cdot i_4^\eta\nu_4\eta_7\\
		+\sum_{j=1}^t u_j\cdot \Sigma^2\tilde{\alpha}_{r_j}+\sum_{j=1}^tw_j\cdot i_5^{r_j}\Sigma^2\alpha,
	\end{multline}
	where $x_i\in\Z/24$, $y_i,z_i\in\z{}$, $u_j,w_j\in\Z/3$.

	Since all components of the map $\phi''$ are suspended maps, we can the method of matrix representation to reduce the expression (\ref{eq:attachmap}). By similar arguments to the proof of Proposition \ref{prop:SM-hbar}, we may assume that in the equation (\ref{eq:attachmap}) there hold:
	\begin{enumerate}
		\item $x_1\in\Z/24$, $x_2=\cdots=x_k=0$;
		\item $y_1\in\Z/2$, $y_2=\cdots=y_{l-c}=0$;
		\item $z_1\in\z{}$, $z_2=\cdots=z_c=0$;
		\item $u_{j_0}\in\Z/3=\{0,\pm 1\}$ and $u_j=0$ for $j\neq j_0$, where $r_{j_0}$ is the minimum of $r_j$ such that $u_j\neq 0$: this follows by the formula $B(\chi^r_s)\tilde{\alpha}_r=\tilde{\alpha}_s$ for $r\leq s$, see Lemma \ref{lem:htps:Moore} (\ref{htps:S7P4});
		\item $w_{j_1}\in\Z/3=\{0,\pm 1\}$ and $w_j=0$ for $j\neq j_1$, where $r_{j_1}$ is the maximum of $r_j$ such that $w_j\neq 0$. 
	\end{enumerate}
	The proposition then follows by Lemma \ref{lem:HL}.
\end{proof}

\begin{proof}[Proof of Proposition \ref{prop:SM-10mfd}]
	By Lemma \ref{lem:olM}, the complement $\ol{M}$ of the top cell in $M$ is given by 
\[\ol{M}\simeq \bigvee_{i=1}^kS^5\vee \bigvee_{i=1}^{l-c}(S^4\vee S^6)\vee \big(\bigvee_{i=1}^c C^6_\eta\big)\vee P^{5}(T)\vee P^{6}(T).\]
By Lemma \ref{lem:htpgrps} and \ref{lem:HL}, there is a homotopy decomposition 
\[\Sigma M\simeq \bigvee_{i=1}^kS^6\vee P^6(T)\vee P^7(T_{\geq 5})\vee C_{\varphi'},\]
where $\varphi'\colon S^{10}\to \bigvee_{i=1}^{l-c}(S^7\vee S^5)\vee \big(\bigvee_{i=1}^c C^7_\eta\big)\vee \bigvee_{j=1}^t P^7(3^{r_j})$ is given by 
\[\varphi=\sum_{i=1}^{l-c}x_i\cdot \nu_7+\sum_{i=1}^{l-c}y_i\cdot \nu_5\eta^2+\sum_{i=1}^{c}(z_i^1\cdot \Sigma\tilde{\nu}_6+z_i^2\cdot i_5^\eta\nu_5\eta^2)+\sum_{j=1}^t w_i\cdot \Sigma^4\alpha_{r_j},\]
where $x_i,z_i^1\in\Z/24$, $y_i,z_i^2\in\z{}$, $w_i\in\Z/3$.
Note that all components of $\varphi$ are suspensions, applying similar discussion on the coefficients as did in the case $n=3$, we may assume that
\begin{enumerate}[(i)]
	\item $x_1\in \Z/24$, $y_1\in\z{}$ and $x_i=y_j=0$ for $1<i,j\leq l-c$;
	\item $w_{j_0}\in\Z/3$, $w_j=0$ for $j\neq j_0$;
	\item $z_1^1\in\Z/24$ and $z_1^2=1$, $z_i^1=z_i^2=0$ for $i=2,\cdots,c$; or $z_1^1\in\Z/24$, $z_2^2=1$, $z_i^1=0$ for $2\leq i\leq c$,  and $z_i^2=0$ for $2\neq i\leq c$. Note that a necessary condition of the second case is $c\geq 2$. Moreover, 
\[(y,z_1^2,z_2^2)\in \{(0,0,0),(1,0,0),(0,1,0),(0,0,1)\}.\]
\end{enumerate}
Then the proposition follows by Lemma \ref{lem:HL}. 
\end{proof}

\bibliography{cohtp-2n+2}

\providecommand{\bysame}{\leavevmode\hbox to3em{\hrulefill}\thinspace}
\providecommand{\MR}{\relax\ifhmode\unskip\space\fi MR }
\providecommand{\MRhref}[2]{%
  \href{http://www.ams.org/mathscinet-getitem?mr=#1}{#2}
}
\providecommand{\href}[2]{#2}
\begin{thebibliography}{10}

\bibitem{ACS24}
S.~Amelotte, T.~Cutler, and T.~So, \emph{Suspension splittings of 5-dimensional
  poincar\'{e} duality complexes and their applications}, arXiv: 2311.16073.

\bibitem{AF22}
J.~Asok, A.and~Fasel, \emph{Euler class groups and motivic stable cohomotopy},
  J. Eur. Math. Soc. (JEMS) \textbf{24} (2022), no.~8, 2775--2822, With an
  appendix by Mrinal Kanti Das. \MR{4416590}

\bibitem{Baues85}
H.-J. Baues, \emph{On homotopy classification problems of {J}.{H}.{C}.
  {W}hitehead}, Algebraic topology, {G}\"{o}ttingen 1984, Lecture Notes in
  Math., vol. 1172, Springer, Berlin, 1985, pp.~17--55. \MR{825772}

\bibitem{Borsuk36}
K.~Borsuk, \emph{Sur les groupes des classes de transformations continues}, CR
  Acad. Sci. Paris \textbf{202} (1936), no.~1400-1403, 2.

\bibitem{Chang50}
S.~Chang, \emph{Homotopy invariants and continuous mappings}, Proc. Roy. Soc.
  London Ser. A \textbf{202} (1950), 253--263. \MR{36508}

\bibitem{CS22}
T.~Cutler and T.~So, \emph{The homotopy type of a once-suspended $6$-manifold
  and its applications}, Topol. Appl. (2022), 108213.

\bibitem{FP04}
F.~Fang and J.~Pan, \emph{Secondary {B}rown-{K}ervaire quadratic forms and
  {$\pi$}-manifolds}, Forum Math. \textbf{16} (2004), no.~4, 459--481.
  \MR{2044023}

\bibitem{GS21}
D.~Grady and H.~Sati, \emph{Differential cohomotopy versus differential
  cohomology for {M}-theory and differential lifts of {P}ostnikov towers}, J.
  Geom. Phys. \textbf{165} (2021), Paper No. 104203, 24. \MR{4241244}

\bibitem{Hansen81}
V.L. Hansen, \emph{On spaces of maps of {$n$}-manifolds into the {$n$}-sphere},
  Trans. Amer. Math. Soc. \textbf{265} (1981), no.~1, 273--281. \MR{607120}

\bibitem{Harperbook}
J.R. Harper, \emph{Secondary cohomology operations}, Graduate Studies in
  Mathematics, vol.~49, American Mathematical Society, Providence, RI, 2002.
  \MR{1913285}

\bibitem{Huang21}
R.~Huang, \emph{Homotopy of gauge groups over non-simply connected
  five-dimensional manifolds}, Sci. China Math. \textbf{64} (2021), no.~5,
  1061--1092. \MR{4249235}

\bibitem{Huang22}
\bysame, \emph{Homotopy of gauge groups over high-dimensional manifolds}, Proc.
  Roy. Soc. Edinburgh Sect. A \textbf{152} (2022), no.~1, 182--208.
  \MR{4383247}

\bibitem{Huang-6mflds}
\bysame, \emph{Suspension homotopy of 6-manifolds}, Algebr. Geom. Topol.
  (2023), 2369--2388.

\bibitem{HL-7mfd}
R.~Huang and P.~Li, \emph{Suspension homotopy of simply connected
  $7$-manifolds}, arXiv: 2208.13145.

\bibitem{Ishimoto69}
H.~Ishimoto, \emph{On the structure of {$(n-1)$}-connected {$2n$}-dimensional
  {$\pi $}-manifolds}, Publ. Res. Inst. Math. Sci. \textbf{5} (1969), 65--77.
  \MR{256408}

\bibitem{Ishimoto73}
\bysame, \emph{On the classification of {$(n-2)$}-connected {$2n$}-manifolds
  with torsion free homology groups}, Publ. Res. Inst. Math. Sci. \textbf{9}
  (1973/74), 211--260. \MR{339226}

\bibitem{KMT2012}
R.~Kirby, P.~Melvin, and P.~Teichner, \emph{Cohomotopy sets of 4-manifolds},
  Proceedings of the {F}reedman {F}est, Geom. Topol. Monogr., vol.~18, Geom.
  Topol. Publ., Coventry, 2012, pp.~161--190. \MR{3084237}

\bibitem{Kons2020}
P.~Konstantis, \emph{A counting invariant for maps into spheres and for zero
  loci of sections of vector bundles}, Abh. Math. Semin. Univ. Hambg.
  \textbf{90} (2020), no.~2, 183--199. \MR{4217950}

\bibitem{Konstantis2020}
\bysame, \emph{Vector bundles and cohomotopies of spin $5$-manifolds}, Homol
  Homotopy Appl. \textbf{23 (1)} (2021), 143--158.

\bibitem{Kosinski93}
A.A. Kosinski, \emph{Differential manifolds}, Pure and Applied Mathematics,
  vol. 138, Academic Press, Inc., Boston, MA, 1993. \MR{1190010}

\bibitem{Lerbet24}
S.~Lerbet, \emph{Motivic stable cohomotopy and unimodular rows}, Adv. Math.
  \textbf{436} (2024), Paper No. 109415, 64. \MR{4669332}

\bibitem{lipc22}
P.~Li, \emph{Homotopy classification of based maps between
  $\mathbf{A}_n^2$-complexes}, arXiv: 2008.03049v3.

\bibitem{lipc-4mfld}
P.~Li, \emph{Homotopy types of suspended 4-manifolds}, Algebr. Geom. Topol.
  \textbf{24} (2024), no.~5, 2933--2956. \MR{4790757}

\bibitem{LZ-5mfld}
P.~Li and Z.~Zhu, \emph{The homotopy decomposition of the suspension of a
  non-simply-connected five-manifold}, Proceedings of the Royal Society of
  Edinburgh: Section A Mathematics (2024), 1--29.

\bibitem{MT68}
R.~E. Mosher and M.~C. Tangora, \emph{Cohomology operations and applications in
  homotopy theory}, Harper \& Row, Publishers, New York-London, 1968.
  \MR{0226634}

\bibitem{Mukai82}
J.~Mukai, \emph{The {$S\sp{1}$}-transfer map and homotopy groups of suspended
  complex projective spaces}, Math. J. Okayama Univ. \textbf{24} (1982), no.~2,
  179--200. \MR{680191}

\bibitem{Neisenbook}
J.~Neisendorfer, \emph{Algebraic methods in unstable homotopy theory}, New
  Mathematical Monographs, vol.~12, Cambridge University Press, Cambridge,
  2010. \MR{2604913}

\bibitem{Peterson56-2}
F.P. Peterson, \emph{Generalized cohomotopy groups}, Amer. J. math. \textbf{78}
  (1956), 259--281. \MR{0084136}

\bibitem{ST79}
S.~Sasao and H.~Takahashi, \emph{Highly connected {P}oincar\'{e} complexes},
  Kodai Math. J. \textbf{2} (1979), no.~2, 139--147. \MR{548390}

\bibitem{ST19}
T.~So and S.~Theriault, \emph{The suspension of a 4-manifold and its
  applications}, Israel J. Math. \textbf{265} (2025), no.~1, 115--151.
  \MR{4870828}

\bibitem{Steenrod1947}
N.E. Steenrod, \emph{Products of cocycles and extensions of mappings}, Ann. of
  Math. (2) \textbf{48} (1947), 290--320. \MR{22071}

\bibitem{Taylor2012}
L.R. Taylor, \emph{The principal fibration sequence and the second cohomotopy
  set}, Proceedings of the {F}reedman {F}est, Geom. Topol. Monogr., vol.~18,
  Geom. Topol. Publ., Coventry, 2012, pp.~235--251. \MR{3084240}

\bibitem{TodaBook}
H.~Toda, \emph{Composition methods in homotopy groups of spheres}, Annals of
  Mathematics Studies, No. 49, Princeton University Press, Princeton, N.J.,
  1962. \MR{0143217}

\bibitem{Wall62}
C.T.C. Wall, \emph{Classification of {$(n-1)$}-connected {$2n$}-manifolds},
  Ann. of Math. (2) \textbf{75} (1962), 163--189. \MR{145540}

\bibitem{Wall66}
\bysame, \emph{Classification problems in differential topology. {V}. {O}n
  certain {$6$}-manifolds}, Invent. Math. \textbf{1} (1966), 355--374;
  corrigendum, ibid. 2 (1966), 306. \MR{215313}

\bibitem{Wall67}
\bysame, \emph{Classification problems in differential topology. {VI}.
  {C}lassification of {$(s-1)$}-connected {$(2s+1)$}-manifolds}, Topology
  \textbf{6} (1967), 273--296. \MR{216510}

\end{thebibliography}
\bibliographystyle{amsplain}

\end{document}